\def\eqref#1{equation~\ref{#1}}
\def\floor#1{\lfloor #1 \rfloor}
\def\1{\bm{1}}
\DeclareMathAlphabet{\mathsfit}{\encodingdefault}{\sfdefault}{m}{sl}
\SetMathAlphabet{\mathsfit}{bold}{\encodingdefault}{\sfdefault}{bx}{n}
\newcommand{\E}{\mathbb{E}}
\newcommand{\R}{\mathbb{R}}
\newcommand{\un}{\texttt{unif}}
\newcommand{\man}{\mathcal{M}}
\newcommand{\tansp}[1]{T_{#1} \man}
\newcommand{\norsp}[1]{N_{#1} \man}
\newcommand{\tanspperp}[1]{T_{#1}^{\perp} \man}
\newcommand{\pt}[1]{\mathcal{P}^{#1}}
\newcommand{\vecf}{\mathfrak{X}{(\man)}}
\newcommand{\retr}[2]{\mathrm{Retr}_{#1} (#2)}
\newcommand{\ptg}[2]{P^g_{#1,#2}}
\newcommand{\ptseq}[1]{\mathcal{P}^s_{#1}}
\newcommand{\rgrad}[1]{\mathrm{grad} #1}
\newcommand{\projtan}[1]{\mathrm{Proj}_{\tansp{#1}}}
\newcommand{\projman}{\mathrm{Proj}_{\man}}
\newcommand{\cliptan}[1]{\mathrm{Proj}_{\mathcal{D}_{#1}}}
\newcommand{\expm}[1]{\mathrm{Exp}_{#1}}
\newcommand{\sptansp}[2]{\mathbb{S}_{\tansp{#1}}(#2)}
\newcommand{\balltansp}[2]{\mathbb{B}_{\tansp{#1}}(#2)}
\newcommand{\innerprod}[2]{\langle #1 , #2 \rangle}
\newcommand{\goldstat}{($\delta,\epsilon$)-stationary }
\newcommand{\norm}[1]{\big \lVert #1 \big \rVert}
\newcommand{\gradnormdelta}[2]{\norm {\mathrm{grad} #1 (#2) }_\delta }
\newcommand{\probE}[1]{\mathbb{E}[#1]}
\newcommand{\dist}{\mathrm{dist}}
\newcommand{\len}{\mathrm{length}}
\newcommand{\Rm}{\mathrm{Rm}}
\newcommand{\sff}{\text{\MakeUppercase{\romannumeral 2}}}
\theoremstyle{plain}
\newtheorem{theorem}{Theorem}[section]
\newtheorem{lemma}[theorem]{Lemma}
\theoremstyle{definition}
\newtheorem{definition}[theorem]{Definition}
\newtheorem{assumption}[theorem]{Assumption}
\theoremstyle{remark}
\newtheorem{remark}[theorem]{Remark}
\title{Finite-Time Analysis of Stochastic Nonconvex Nonsmooth  Optimization on the Riemannian Manifolds}
\author{%
  Emre Sahinoglu \\
  Northeastern University\\
  \texttt{sahinoglu.m@northeastern.edu}
  \And
  Youbang Sun \\
  Tsinghua University\\
  \texttt{ybsun@mail.tsinghua.edu.cn}\\
  \And
  Shahin Shahrampour \\
  Northeastern University\\
  \texttt{s.shahrampour@northeastern.edu}\\
}
\begin{document}

\maketitle

\begin{abstract}
This work addresses the finite-time analysis of nonsmooth nonconvex stochastic optimization under Riemannian manifold constraints. We adapt the notion of Goldstein stationarity to the Riemannian setting as a performance metric for nonsmooth optimization on manifolds. We then propose a Riemannian Online to NonConvex (RO2NC) algorithm, for which we establish the sample complexity of $O(\epsilon^{-3}\delta^{-1})$ in finding \goldstat points. This result is the first-ever finite-time guarantee for  fully nonsmooth, nonconvex optimization on manifolds and matches the optimal complexity in the Euclidean setting. When gradient information is unavailable, we develop a zeroth order version of RO2NC algorithm (ZO-RO2NC), for which we establish the same sample complexity. The numerical results support the theory and demonstrate the practical effectiveness of the algorithms.
\end{abstract}

\section{Introduction}

Gradient-based iterative optimization algorithms are frequently used as numerical solvers for machine learning problems, many of which deal with search spaces with manifold structures. This includes deep learning \citep{wang2020orthogonal}, natural language processing \citep{jawanpuria2020geometry}, principal component analysis (PCA) \citep{edelman1998geometry}, dictionary learning \citep{cherian2016riemannian,sun2016complete}, Gaussian mixture models \citep{hosseini2015matrix} and low-rank matrix completion \citep{heidel2018riemannian}. Riemannian optimization methods  \citep{absil2008optimization,boumal2023introduction,sato2019riemannian} offer a toolkit to solve optimization problems with manifold constraints and have attracted great interest due to their wide range of applications. However, unfortunately, conventional Riemannian optimization algorithms require access to the derivative of a {\it smooth} function, often falling short in highly nonsmooth, nonconvex problems such as training neural networks.

In this paper, we consider stochastic Riemannian optimization of an objective function $f:\man \to \R$,
\begin{equation} \label{eq:objective_func}
    \underset{x \in \man}{\min} \Big\{ f(x):=\E_{\nu} [F(x,\nu)]\Big\},
\end{equation}
where $F$ is a stochastic {\it nonsmooth} objective function, defined on a $d$-dimensional complete manifold $\man$ that can be embedded in the Euclidean space, and $\nu$ corresponds to random data samples. Many important problems can be formulated as nonsmooth Riemannian optimization over smooth manifolds, including sparse PCA, compressed modes in physics, unsupervised feature selection, and robust low-rank matrix completion \citep{chen2024nonsmooth,hosseini2019nonsmooth}. 
This problem setting is also commonly encountered in deep neural networks, where optimization algorithms must be able to cope with deep nonlinear layers with ReLU activations \citep{arjovsky2016unitary,fei2025survey,fei2022vit,sra2016geometric,zhang2023adalora}. 

There exists a scant literature on the theory of nonsmooth Riemannian optimization, mainly focusing on the asymptotic results and leaving the {\it finite-time analysis} unexplored. In fact, even in the Euclidean setup, finite-time results for the nonsmooth, nonconvex setting have been studied only recently in the last few years. This is perhaps not surprising; the finite-time analysis of smooth, nonconvex optimization is usually carried out by finding a stationary point $x$ such that $\|\nabla f(x)\| \leq \epsilon$. However, for nonsmooth, nonconvex functions, finding such an $\epsilon$-stationary point is in fact intractable. Instead, the notion of Goldstein stationarity has recently been analyzed by \cite{zhang2020complexity} to provide non-asymptotic results. Goldstein stationarity does not directly evaluate $\|\nabla f(x)\|$ and instead considers the convex hull of subgradients of points in the $\delta$-neighborhood of $x$. 

To address nonsmooth problems on {\it Riemannian} manifolds, this stationarity criterion can be adapted to the Riemannian setting using the Riemannian $\delta$-subdifferential definition of \cite{hosseini2017riemannian}. However, the finite-time analysis of finding such \goldstat points on Riemannian manifolds is a tantalizing challenge that has remained elusive.

{\textbf{Our Contributions.}} In this paper, we address the {\it finite-time} analysis of {\it nonsmooth, nonconvex stochastic Riemannian} optimization: 
\begin{itemize}
    \item  We adapt the notion of Goldstein stationarity \citep{lin2022gradient,zhang2020complexity} to the Riemannian setting, providing a metric to evaluate the finite-time performance of nonsmooth, nonconvex optimization on manifolds. We then propose a Riemannian Online to NonConvex (RO2NC) algorithm that can be analyzed using regret bounds in online optimization. Our proposed algorithm can utilize retraction as a computationally efficient alternative for exponential mapping. 
    \item We theoretically prove that RO2NC achieves 
     the  sample complexity of $O(\delta^{-1}\epsilon^{-3})$ in finding \goldstat points. We first establish this result using parallel transports (Theorem \ref{thm:O2NCptg}) and then extend it to projections (Theorem \ref{thm:O2NCvt}). This rate is the {\it first ever finite-time result for the fully nonsmooth, nonconvex optimization on manifolds} and matches the optimal sample complexity in the Euclidean setting \citep{cutkosky2023optimal}. It is derived under mild technical assumptions applicable to standard manifolds (e.g., Stiefel, Hadamard, Grassmann)  without the need for restrictive assumptions on the objective function (e.g., weak convexity), which is assumed to be Lipschitz continuous. 

    \item Extending our results to the {\it zeroth order} oracle setting, we show that ZO-RO2NC can achieve the same rate (Theorem \ref{thm:zeroorderpt}) using a gradient estimator relying solely on stochastic function value queries. Our proposed gradient estimator samples vectors on the tangent space instead of the manifold, resulting in more computational efficiency without costly manifold-specific computations.
    We show that the gradient estimator satisfies certain distance bounds to the Goldstein subdifferential set, which is sufficient to achieve the Goldstein stationarity criterion. The main merit of this result is streamlining the zeroth order analysis without recourse to more challenging alternative estimators that require the calculation of volume and surface of manifolds, which could be computationally demanding. 
\end{itemize}

\begin{table*}[t]
\caption{Convergence rates of various algorithms over nonsmooth, nonconvex objectives: Composite objectives are in the form of $f(x)+h(x)$, where $f$ is smooth and $h$ is possibly nonsmooth and convex. Fully nonsmooth objectives can be written as $f(x)=E_\nu [F(x,\nu)]$, where $F$ is nonsmooth and nonconvex. Our work is the first to provide the finite-time analysis of the {\it fully} nonsmooth setting. $^\dagger$ and $^\ddagger$ indicate that the objective is defined on Stiefel manifold or a compact manifold, respectively.}
\label{sample-table}
\vskip 0.15in
\begin{center}
\begin{small}
\begin{sc}
\begin{tabular}{lcccc}
\toprule
Reference & Method & Setting & Objective & Convergence \\
\midrule
\cite{grohs2016varepsilon}    &  Subgradient & Deterministic & Nonsmooth & Asymptotic \\

\cite{chen2024nonsmooth} & Proximal Gradient & Deterministic & Composite $^\dagger$ & $O(\epsilon^{-2})$ \\

\cite{wang2022riemannian} & Prox. Grad. - SPIDER & Stochastic &  Composite $^\dagger$ & $O(\epsilon^{-3})$ \\

\cite{li2022riemannian} & R-ADMM & Deterministic & Composite $^\ddagger$ & $O(\epsilon^{-4})$ \\

\cite{deng2024oracle} & Aug. Lagrangian &Deterministic &  Composite $^\ddagger$ & $O (\epsilon^{-3})$ \\

\cite{deng2024oracle} & Aug. Lagrangian &Stochastic &  Composite $^\ddagger$ & $\tilde{O}(\epsilon^{-3.5})$ \\ 

\cite{peng2023riemannian} & Smoothing &Deterministic &  Composite $^\ddagger$ & $O(\epsilon^{-3})$ \\

\cite{peng2023riemannian} & Smoothing &Stochastic &  Composite $^\ddagger$ & $O(\epsilon^{-5})$ \\ 

\textbf{Our Work} & Subgradient & Stochastic  & Nonsmooth  & $O(\delta^{-1}\epsilon^{-3})$ \\

\bottomrule
\end{tabular}
\end{sc}
\end{small}
\end{center}
\vskip -0.1in
\end{table*}

\subsection{Highlights of the Technical Analysis}

\textbf{RO2NC Algorithm Design.} Developed by \cite{cutkosky2023optimal}, O2NC is an optimal algorithm for finding Goldstein stationary points of nonsmooth, nonconvex objectives in the Euclidean space. O2NC works based on the interplay of two algorithms. At each epoch, an action $\Delta_t$ is generated via an online learning algorithm and is used to update the variable $x_{t+1}=x_{t}+\Delta_t$. Then, a gradient $g_t$ is evaluated at a random point $w_t = x_t + s\Delta_t $ with $s\sim\un[0,1] $ and is given to the online learning algorithm as a feedback to update $\Delta_t$. (i) One difficulty in the Riemannian setting is that $x_{t+1}=x_{t}+\Delta_t$ does not ensure feasibility. We must keep the iterates $\{x_t\}$ on the manifold $\man$, which requires the use of retractions in the updates, i.e., $x_{t+1}=\retr{x_{t}}{\Delta_t}$. (ii) Also, since the action and feedback of the online learning algorithm belong to different tangent spaces, we must transport these vectors via parallel transports and/or projections. (i) and (ii) together introduce a technical challenge different from the Euclidean setting: we can no longer rely on the classical regret analysis in online optimization to streamline the intra-epoch analysis. Instead, we must creatively choose the right benchmark action at each epoch using projections and/or parallel transports to judiciously analyze the error terms caused by the manifold geometry.

\textbf{Adapting Goldstein Stationarity to Riemannian Setting.}
In the Euclidean setting, the Goldstein subdifferential set is defined as a convex hull of Euclidean differential sets at different points. In the Riemannian setting, subdifferential sets at different points possibly belong to different tangent spaces, so it is required to transport those sets to a common tangent space. We choose the parallel transport operation in the definition of Riemannian Goldstein subdifferential set as it preserves the length of the vectors unlike the projection operation. Upper bounding the minimum norm element in the Goldstein differential set requires transportation of gradients at different points, where the distortion caused by parallel transport along the closed loops is analyzed with the curvature of the manifold (c.f. Lemma \ref{lem:Goldstein}). Using a similar approach, we derive an upper bound on the distance between zeroth order gradient estimates and Goldstein subdifferential set by quantifying the effect of parallel transport along geodesic triangles (c.f. Lemma \ref{lem:RiemGeomLem}).

\textbf{Zeroth Order Riemannian Gradient Estimator.} In the Euclidean setting, the analysis of zeroth order gradient estimator for nonsmooth objectives is carried out through smoothing. The key is that the gradient of the smoothed objective $f_{\delta}$ is estimated by a stochastic gradient estimator $g_{\delta}$, such that $\probE{g_{\delta}(x)} = \nabla f_{\delta}(x)$ \cite[Lemma E.1]{lin2022gradient}. Extending this approach to the Riemannian setting introduces a great challenge. The smoothed objective $f_{\delta}$ necessitates the volume  calculation of a manifold set, which is computationally expensive. To make the estimation practical, we define an objective $h_{\delta}$ based on efficiently sampling a vector in the tangent space, such that $\probE{g_{\delta}(x)} = \rgrad{h_\delta(x)}$, where $g_{\delta}$ is the Riemannian gradient estimator. However, the inclusion property $\probE{g_{\delta}(x)} \in \partial_{\delta}f(x)$ of the Euclidean setting does not hold anymore in the Riemannian setting. We address this technical challenge by deriving an upper bound on the distance between these two terms as a function of the curvature and $\delta$ (c.f. Lemma \ref{lem:smoothing_bound}). We further compute a bound on the Lipschitz constant of $h_{\delta}$ (Lemma \ref{lem:riemLips}) and show that the $O(\delta^{-1}\epsilon^{-3})$ convergence rate is preserved.

\subsection{Literature Review}
{\textbf{Nonsmooth, Nonconvex Techniques in the Euclidean Setting.}}
Given that calculating an $\epsilon$-stationary point for nonsmooth, nonconvex functions is intractable in general, various assumptions or conditions have been proposed in the literature (e.g., weak convexity \citep{chen2021distributed,davis2019stochastic}). In recent years, the finite-time convergence analysis of the Goldstein stationarity, proposed by \cite{zhang2020complexity}, has gained much interest.
Their algorithm is guaranteed to reach a \goldstat point with the complexity of $O(\epsilon^{-4}\delta^{-1})$.
\cite{cutkosky2023optimal} improved this rate to $O(\epsilon^{-3}\delta^{-1})$ with the introduction of an online to nonconvex conversion method. The method was then extended to various settings, such as decentralized optimization \citep{sahinoglu2024online}.

For some optimization applications, gradient evaluations are not possible, and only zeroth order information is available. 
To solve these problems, \cite{larson2019derivative}  proposed a gradient estimator, which is calculated at point $x$ and requires evaluation of $f(x+tv)$. 
In nonsmooth, nonconvex optimization, the convergence rate for finding \goldstat points with gradient-free methods was analyzed by \cite{lin2022gradient}. Later \cite{kornowski2024algorithm} proved that the optimal dimensional dependence of $O(d)$ in the zeroth order setting could be achieved with O2NC.

{\textbf{Manifold Optimization.}} Due to the unique properties of manifolds, many Euclidean optimization algorithms have been adapted to the Riemannian setting. 
Utilizing manifold operations, these methods are able to match the convergence rates of their Euclidean counterparts.
These Riemannian algorithms include gradient descent methods \citep{bonnabel2013stochastic,zhang2016first}, projection-free methods \citep{weber2022projection,weber2023riemannian}, and accelerated methods \citep{kim2022accelerated,martinez2022global}.
The convergence results also cover various settings, such as 
min-max \citep{jordan2022first,martinez2023accelerated,zhang2023sion}, variance reduction \citep{sato2019riemannian}, online optimization \citep{chen2024decentralized,maass2022tracking,sahinoglu2025decentralized,sahinoglu2025online,wang2023online}, and decentralized optimization \cite{chen2021decentralized,sun2024retraction}. However, the majority of the studies here focus on {\it smooth} and sometimes geodesically convex objective functions, leaving the nonsmooth optimization relatively underexplored.

For the gradient-free optimization setting, zeroth order methods have been adapted from Euclidean to the Riemannian setting \cite{li2023stochastic, li2023zeroth,wang2025federated}. The Riemannian zeroth order methods have also been extended to different settings, such as online Riemannian algorithms \citep{maass2022tracking,sahinoglu2025decentralized,wang2023online} and accelerated algorithms \citep{he2024riemannian}.

{\textbf{Nonsmooth, Nonconvex Optimization on Manifolds}}.  
The optimization task becomes more challenging when the objective function is nonsmooth and nonconvex over the Riemannian manifold. 
Some techniques have been developed and analyzed in nonsmooth, nonconvex optimization, such as (i) subgradient-oriented methods, (ii) proximal point methods, and (iii) operator splitting methods.
In subgradient-oriented methods \citep{borckmans2014riemannian,deng2025single,dirr2007nonsmooth,grohs2016varepsilon,grohs2016nonsmooth,hosseini2017riemannian}, at iteration $t$ a direction $g_t$ in the tangent space of the manifold is calculated from the current or previous subgradient evaluations. The next iterate $x_{t+1}$ is then calculated based on manifold operations, such as retractions or exponential mappings. 
For proximal point algorithms \citep{chen2020proximal,de2016new,hoseini2023proximal,huang2022riemannian}, the algorithm solves a sub-problem in each iteration. The objectives for the sub-problems are formulated by an approximation of the original function combined with an additional penalty term. However, in some cases, solving the sub-problems is just as difficult as the original problem, making these methods difficult to implement. The operator-splitting methods \citep{lai2014splitting} divide the original problem into several sub-problems that are easy to solve using techniques such as the alternating direction method of multipliers (ADMM). These methods either lack convergence guarantees \citep{kovnatsky2016madmm} or need further technical conditions \citep{zhang2020primal}.

\section{Preliminaries}


In this section, we first provide a brief introduction on manifolds and useful notations/definitions. Then, we introduce Goldstein stationarity and state our technical assumptions.

\subsection{Background on Manifold Optimization}

We consider the optimization task in \eqref{eq:objective_func} defined on $\man$, which is an embedded submanifold in the Euclidean space. We denote by $\tansp{x}$ the tangent space of the manifold at point $x$. We denote the sphere (and ball) with radius $r$ on $\tansp{x}$ by $\sptansp{x}{r}$ (and $\balltansp{x}{r}$), respectively. The set of pairs $(x,\xi_x)$ where $\xi_x \in \tansp{x}$ is referred to as the tangent bundle, denoted by $T\man$. 
Similarly, the set of pairs $(x,s_x)$ such that $\innerprod{s_x}{\xi_x} = 0 $ for every $\xi_x \in \tansp{x}$ is called the normal bundle, denoted by $N\man$. 
Let $\vecf$ denote space of vector fields on $\man$ and $\nabla : T\man \times T\man \to T\man, (\xi,\eta) \to \nabla_{\xi}\eta \in T\man$ denote the Levi-Civita connection on manifold $\man$. We adapt the Euclidean metric to the embedded manifold and use $\innerprod{\cdot}{\cdot}$ and $\norm{ \cdot }$ to denote the inner product and norm, respectively. 

\textit{Geodesics} on the manifolds are generalizations of lines in Euclidean space, curves with constant speed that are locally distance-minimizing. Consequently, we can define the distance between two points on the manifold as the length of geodesic $\gamma$, $\dist(x,y) := {\inf}_{\gamma} \int_0^t \norm{\gamma'(t) }dt$, where $\gamma(0)=x$ and $\gamma(1)=y$. With the help of geodesics, we next introduce the \textit{exponential mapping} and {\it retraction} operations. From an optimization perspective, both exponential mapping and retraction are used to traverse manifold $\man$. 
The exponential mapping on a Riemannian manifold defines a geodesic on the manifold $\gamma(t)=\expm{x}(tv)$ and the distance between $x$ and $\expm{x}(v)$ is $\dist(x,\expm{x}(v)) = \norm{v}$. Exponential mappings are hard to compute in general; as mitigation, retractions $\retr{x}{\cdot}: \tansp{x} \to \man$ are introduced as first-order approximations of exponential mappings and are easier to compute. Based on the definition of distance, we denote by $B(x,\delta) := \{y \in \man\ : \dist(x,y) \le \delta\}$ a ball centered at $x$ with radius $\delta$ \citep{hosseini2017riemannian}.

Given that the tangent space $\tansp{x}$ is defined with respect to point $x$, vectors defined in different tangent spaces are not directly comparable. To this end, we can use the notion of
{\it parallel transport}, which is a linear, isometric mapping from one tangent space to another, defining a way to transport the local geometry along a curve. We denote parallel transport along the minimizing geodesic by $\ptg{x}{y}:\tansp{x} \to \tansp{y}$, which preserves the inner products such that $\innerprod{u}{v} = \innerprod{\ptg{x}{y}(u)}{\ptg{x}{y}(v)}$ for $u,v \in \tansp{x}$. As a computationally efficient alternative to parallel transports, we can use vector transports  \citep{absil2008optimization}, a specific case of which is projection. For $x,y \in \man$ the orthogonal projection onto tangent space can be defined as $\projtan{y}:\tansp{x} \to \tansp{y}$, mapping $\xi \in \tansp{x}$ to $\projtan{y}(\xi)\in \tansp{y}$ such that $\innerprod{\projtan{y}(\xi)}{\xi-\projtan{y}(\xi)}=0$. While more efficient than parallel transports, vector transports (and projections) are not necessarily isometric, requiring more delicate analysis. 





\subsection{Goldstein Stationarity on Riemannian Manifolds}

Using concepts defined in Riemannian geometry, we next provide the following notions for optimization on manifolds.
As a result of Rademacher's theorem and local equivalence of the Riemannian distance with Euclidean distance in a chart, every Lipschitz function defined on a Riemannian manifold $\man$ is differentiable almost everywhere with respect to the Lebesgue measure on $\man$ \cite{hosseini2017riemannian}.

\begin{definition}
\label{def:Clarkesubdiff}
We define the Riemannian subdifferential of $f$ at $x$, denoted by $\partial f(x) := \mathrm{conv}\{\lim_{l \to \infty} \rgrad f(x_l) : x_l \to x, x_l \in \Omega_f \} \subset \tansp{x}$, where $\Omega_f : \{x \in \man : f \text{ is differentiable at } x \} $ and $\mathrm{conv}$ denotes the convex hull operator.

\end{definition}
 Here, $\rgrad f(x)$ denotes the Riemannian gradient,  defined as the unique tangent vector that satisfies $df(x)[\xi] = \innerprod{\rgrad f(x)}{\xi}$ for all $\xi \in \tansp{x}$. Although the sequence $\{\rgrad f(x_l)\}_l$ lies in  different tangent spaces $\{\tansp{x_l}\}_l$, the limit $\lim_{l \to \infty} \rgrad f(x_l)$ can still be defined in a weak sense. Specifically, it is characterized as the vector satisfying $\lim_{l \to \infty}  \innerprod{\rgrad f(x_l)}{X(x_l)} \to \innerprod{\rgrad f(x)}{X(x)}$ for any smooth vector field $X$ on $\man$ \cite{grohs2016varepsilon}. For a smooth function $f$ defined on an embedded submanifold in the Euclidean space $\rgrad f(x) = \projtan{x} (\nabla f(x))$. This also holds for the subdifferential sets and the Riemannian subdifferential set is the projection of the Euclidean subdifferential set onto the tangent space at $x$.

Definition \ref{def:Clarkesubdiff} extends the notion of the Clarke subdifferential to the Riemannian setting, following prior work such as \cite{grohs2016varepsilon, hosseini2011generalized}. Among the various subdifferential concepts, the Clarke subdifferential is particularly useful due to its inclusivity and well-behaved analytical properties, which facilitate the study of convergence in nonsmooth optimization. We next provide the definition for $\delta$-subdifferential in the neighborhood of point $x$. 
\begin{definition}
\label{def:Riemsubdiff}
Let $f$ be a Lipschitz continuous function on $\man$. $\delta$-subdifferential of $x \in \man$ is defined as
$\partial_{\delta} f(x) := \mathrm{cl~conv} \{ \ptg{y}{x} (\partial f(y)) : y \in \mathrm{cl}~B(x,\delta)\}$, where $\mathrm{cl}$ denotes the closure. \end{definition}

Different from the Euclidean $\delta$-subdifferential set, the Riemannian $\delta$-subdifferential requires a transport operation $\ptg{y}{x}$ since the tangent spaces are not identical. Next, we introduce the Riemannian version of Goldstein stationarity. 

\begin{definition}
\label{def:GoldRiem}

Given a Lipschitz continuous function $f:\man \rightarrow \mathbb{R}$, a point $x \in \man$ and $\delta > 0$ , denote  $\gradnormdelta{f}{x} := \min\{\|g\| : g \in \partial_{\delta} f(x) \} $. A point $x$ is called a \goldstat point of $f(\cdot)$ if $\gradnormdelta{f}{x} \le \epsilon$. 
\end{definition}

This definition generalizes the Euclidean Goldstein stationarity to the Riemannian  setting. The main difference is that parallel transport operations and Riemannian subdifferentials are used in $\delta$-subdifferential set due to Riemannian geometry.

 Let $X$ and $Y$ be vector fields on $\man$. Since $\man \subset \R^n$ is an embedded submanifold of Euclidean space equipped with Euclidean metric, $X$ and $Y$ can be extended in $\R^n$. Applying ambient covariant derivative $\tilde{\nabla}$ and decomposing it into tangential and normal components gives $\tilde{\nabla}_X Y =  (\tilde{\nabla}_X Y)^T + (\tilde{\nabla}_X Y)^{\perp} $. The {\it second fundamental form} \citep{li2023zeroth} is  $\sff : \vecf \times \vecf \to \Gamma(NM) $ given by $\sff(X,Y) = (\tilde{\nabla}_X Y)^{\perp} $ where $\Gamma(N\man)$ denotes the space of smooth normal vector fields. 

\subsection{Technical Assumptions}
We now introduce a series of assumptions about the problem setting, all of which are considered standard in the relevant literature and are needed for our analytical results.
We have the following assumption on the manifold and its second fundamental form.

\begin{assumption}
\label{assum:boundedsecform}
We assume that the manifold $\man$ is an embedded submanifold of the Euclidean space and that the norm of the second fundamental form is bounded by $C$ for all unit vectors $\xi, \eta \in T\man$, i.e., for all $\norm{\xi} = \norm{\eta}=1$ we have $\norm{\sff(\xi,\eta)}\le C$. This implies that the curvature tensor $R(X,Y)Z$ is bounded by a constant $K_c$, i.e., $\norm{R(X,Y)Z} \le K_c \norm{X}\norm{Y}\norm{Z} $, and the sectional curvature is bounded by a constant $K_s$ for any vector fields $X,Y,Z \in \vecf$.
\end{assumption}
The above assumption on the bounded second fundamental form connects the extrinsic and intrinsic geometries. As discussed in \cite{li2023stochastic}, it is a stronger condition than bounded sectional curvature,  but it is required to measure the extrinsic difference on vectors, caused by transport operations. For more discussion, see Appendix \ref{App:curv}. In Riemannian optimization, variable updates may involve exponential mappings and retractions. Although exponential mapping has theoretically favorable properties, retractions are preferred in practice due to their computational efficiency. We have the following assumptions on retraction curves.

\begin{assumption}
\label{assum:retr_curves} 
For the retraction curve $\retr{x}{t\xi}$ with $\xi \in \tansp{x}$, we assume that
\begin{equation*}
    \begin{aligned}
        \norm{\frac{d}{dt}\retr{x}{t\xi}} \le \norm{\xi} ~~~~~~~\text{and}~~~~~~
        \norm{\frac{d^2}{dt^2}\retr{x}{t\xi}} \le C' \norm{\xi}^2, 
    \end{aligned}
\end{equation*}
where $C'$ is a constant depending on the manifold properties.
\end{assumption}

Since retractions are approximations of exponential mappings, the above assumption can be thought as the cost of applying retractions instead of exponential mappings.
It is presented in a simplified form for analytical clarity; however, the proposed implementation can be readily extended to more general retraction-based settings, such as projection-based or smooth first-order retractions (see Appendix \ref{app:manret}). 
Assumption \ref{assum:retr_curves} is satisfied by various matrix manifolds and commonly used retraction curves. In particular, for exponential mappings on any manifold 
$\man$, the first condition holds with equality, while the second condition holds with $C'=C$, where $C$ is defined in Assumption \ref{assum:boundedsecform}. On the Stiefel manifold $St(p,n) = \{ X \in \R^{n\times p} : X^{\top} X = I_p \}$, for polar decomposition retraction $\retr{X}{t\xi} = (X+t\xi)(I_p+t^2\xi^{\top}\xi)^{-\frac{1}{2}}$ Assumption \ref{assum:retr_curves} holds with $C' =1$ (see Appendix \ref{app:manret}).


Next, we provide the following standard assumptions on the objective function, applicable to a diverse range of commonly used objectives.

\begin{assumption}
\label{assum:func_lips}
We assume that the objective function has the form $f (x) = \E_{\nu} [F (x,\nu)] $, where $\nu$ denotes the random index. The stochastic component of the function $F(\cdot,\nu) : \man \rightarrow \mathbb{R}$ is $L(\nu)$-Lipschitz for any $\nu$, i.e., it holds that $$\left|F(x,\nu)-F(y,\nu)\right| \le L(\nu) \dist(x,y),$$ for any $x,y \in \man$. $L(\nu)$ has a bounded second moment such that $\E_{\nu} [L(\nu)^2] \le L^2$. We also assume that $f$ is lower bounded on $\man$, i.e., $\inf_{x\in \man} f(x)>-\infty$.
\end{assumption}

\begin{assumption}
\label{assum:func_grad}
We assume that the stochastic Riemannian  oracle returns unbiased estimates, i.e., 
$$\E_{\nu} [\rgrad{F(x,\nu)}] = \rgrad{f(x)}.$$ 
Furthermore, we assume that the second moment of the stochastic gradient is bounded such that  $\E_{\nu}\big[\norm{ \rgrad{F(x,\nu)} }^2\big] \leq G^2$, and we denote its variance by $\sigma^2$.
\end{assumption}

Assumption \ref{assum:func_grad} is standard in stochastic optimization with first order oracles \cite{grohs2020handbook}. With the definitions and assumptions provided above, we introduce the following lemma, which extends the result of \cite[Theorem 4.1]{li2023zeroth} on geodesic paths to broken geodesic paths.

\begin{lemma}
\label{lem:secfund}
    Suppose $\man$ is an embedded submanifold of the Euclidean space with a second fundamental form bounded by $C$. Let $\gamma :[0,t]\to \man$ be a broken geodesic (a piecewise smooth curve with a finite number of curve segments, each of which is a geodesic) and $v \in \tansp{\gamma(0)}$. Then, we have $$\norm{\pt{\gamma}_{0,t}(v)-\projtan{\gamma(t)}(v)}\le C\norm{v} \len(\gamma),$$
    where the parallel transport is computed along the path $\gamma$. 
\end{lemma}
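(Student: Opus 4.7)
The plan is to reduce the bound to a piecewise-smooth computation in the ambient Euclidean space. Decompose $\gamma$ into smooth geodesic segments $\gamma_1, \ldots, \gamma_n$ joined at breakpoints $p_0, p_1, \ldots, p_n = \gamma(t)$, with $\ell_i := \len(\gamma_i)$ and $\sum_{i=1}^n \ell_i = \len(\gamma)$. Define $u_0 := v$ and, inductively, $u_i$ to be the parallel transport of $u_{i-1}$ along $\gamma_i$; by concatenation $u_n = \pt{\gamma}_{0,t}(v)$, and by the isometry of parallel transport $\|u_i\| = \|v\|$ for every $i$. Since $u_n \in \tansp{p_n}$, one has the identity
\begin{equation*}
\pt{\gamma}_{0,t}(v) - \projtan{p_n}(v) \;=\; \projtan{p_n}(u_n) - \projtan{p_n}(v) \;=\; \projtan{p_n}(u_n - v),
\end{equation*}
and because orthogonal projection onto $\tansp{p_n}$ is $1$-Lipschitz, the entire task reduces to bounding the ambient Euclidean displacement $\|u_n - v\|$.

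On each smooth segment $\gamma_i$, parameterized by arc length $s \in [0,\ell_i]$, the parallel field $w(s)$ along $\gamma_i$ satisfies the ambient-space ODE
\begin{equation*}
\frac{dw}{ds} \;=\; \sff\big(\gamma_i'(s),\, w(s)\big).
\end{equation*}
This is the Gauss formula $\tilde{\nabla}_{\gamma_i'} w = \nabla_{\gamma_i'} w + \sff(\gamma_i', w)$ applied to a parallel field ($\nabla_{\gamma_i'} w = 0$), using that the ambient connection $\tilde{\nabla}$ is just the coordinate-wise Euclidean derivative along $\gamma_i$. Assumption \ref{assum:boundedsecform}, together with $\|\gamma_i'(s)\| = 1$ and $\|w(s)\| = \|v\|$, then gives $\|dw/ds\| \le C\|v\|$, so integrating over $[0,\ell_i]$ yields the per-segment ambient displacement bound $\|u_i - u_{i-1}\| \le C\|v\|\ell_i$.

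Summing over segments by the triangle inequality produces
\begin{equation*}
\|u_n - v\| \;\le\; \sum_{i=1}^n \|u_i - u_{i-1}\| \;\le\; C\|v\| \sum_{i=1}^n \ell_i \;=\; C\|v\|\,\len(\gamma),
\end{equation*}
which, composed with the projection identity from the first paragraph, gives the desired inequality $\|\pt{\gamma}_{0,t}(v) - \projtan{p_n}(v)\| \le C\|v\|\,\len(\gamma)$.

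The main technical subtlety is the treatment of the corners of the broken geodesic, where $\gamma'$ jumps: the ambient ODE is valid only on each smooth piece, but the parallel-transported field is continuous across the breakpoints by the very definition of $\pt{\gamma}$, so the per-segment estimates just add. A more tempting alternative---invoking Theorem 4.1 of \cite{li2023zeroth} segment-by-segment and trying to iterate the bound $\|u_i - \projtan{p_i}(u_{i-1})\| \le C\|v\|\ell_i$---does \emph{not} close inductively, because the residual error $\|\projtan{p_i}(u_{i-1} - v)\| \le \|u_{i-1} - v\|$ appearing in the next step is not controlled by the natural inductive quantity $\|u_{i-1} - \projtan{p_{i-1}}(v)\|$ (the projections live at different base points). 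The ambient-displacement detour is precisely what removes this mismatch, and it explains why the same constant $C$ from the single-geodesic case of \cite{li2023zeroth} carries over verbatim to the broken case.
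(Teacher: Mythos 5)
Your proof is correct and takes a route genuinely different from the paper's. The paper runs a Gronwall-style argument directly on $\|w(t) - v^T(t)\|$, with $v^T(t) := \projtan{\gamma(t)}(v)$: it uses that $w'(t)$ is normal, separately bounds the \emph{tangential} component of $(v^T)'(t)$ by $C$ via the Gauss formula applied to an auxiliary parallel frame, and combines these to get $(\|w(t) - v^T(t)\|^2)' \le 2C\|w(t) - v^T(t)\|$, which it integrates from zero initial value. Your proof factors out the projection step first: the identity $\pt{\gamma}_{0,t}(v) - \projtan{p_n}(v) = \projtan{p_n}(u_n - v)$ together with the contraction property of orthogonal projection reduces everything to bounding the ambient displacement $\|u_n - v\|$, and that you bound by directly integrating the Gauss-formula ODE $w' = \sff(\gamma', w)$ under the absolute estimate $\|w'\| \le C\|v\|$, with no differential inequality needed. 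Both arguments rest on the same geometric input (bounded second fundamental form, purely normal ambient derivative of a parallel field), but your decomposition avoids differentiating the moving projection $v^T(t)$, yields the slightly stronger intermediate statement $\|u_n - v\| \le C\|v\|\,\len(\gamma)$, and passes through the breakpoints of the broken geodesic by a plain triangle inequality rather than implicitly relying on continuity of $v^T(t)$ across corners. Your closing remark about why iterating the single-segment bound of \cite{li2023zeroth} fails to close inductively is also sound; it pinpoints exactly the base-point mismatch that the ambient-displacement reformulation dissolves.
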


This result allows us to measure the distortion between an intrinsic parallel transport operation and an extrinsic projection operation on the manifold. We note that for the Euclidean case, the second fundamental form is $0$. For the extension of Euclidean algorithms to the Riemannian setting, Lemma \ref{lem:secfund} characterizes the extra terms raised by the Riemannian geometry.


For a sequence of points $S_t = \{x_s\}_{s=0}^t$, let us define the parallel transport operator over $S_t$ as  $\ptseq{S_t} := \ptg{x_{t-1}}{x_t} \circ \ptg{x_{t-2}}{x_{t-1}} \circ ... \circ \ptg{x_{0}}{x_1}$ and its inverse operator as $(\ptseq{S_t})^{-1}$. In general, $\gradnormdelta{f}{y}$ is difficult to compute directly. The following lemma provides an analyzable upper bound on $\gradnormdelta{f}{y}$, which will be used in our analysis.
\begin{lemma}
\label{lem:Goldstein}
Let Assumption \ref{assum:boundedsecform} hold. For a sequence of points $\{x_t\}_{t=0}^{T-1}$ which satisfies $\dist(x_t,x_{t+1})\le D \quad \forall t \in \{0,1,\dots, T-1\}$, a set of gradient vectors $\nabla_{t} \coloneqq \rgrad{f(w_t)} \in \tansp{w_t}$ with $\norm{\nabla_t} \leq L$, $\dist(x_{t+1},w_t)\le D$, and a point $y$ that satisfies $\dist(w_t,y) \le \delta :=DT, \quad \forall t \in \{0,1,\dots, T-1\}$ , we have that $$\gradnormdelta{f}{y} \le \norm{\frac{1}{T} \sum_{t=0}^{T-1} (\ptseq{S_{t+1}})^{-1}\circ\ptg{w_t}{x_{t+1}}(\nabla_{t})} + 3L\delta C.$$
\end{lemma}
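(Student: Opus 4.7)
My plan is to exhibit an explicit element of $\partial_\delta f(y)$ whose norm controls the left-hand side, and then relate its norm to the right-hand side via a holonomy argument. Since $\dist(w_t,y)\le \delta$ for every $t$, Definition~\ref{def:Riemsubdiff} gives $\ptg{w_t}{y}(\nabla_t)\in \partial_\delta f(y)$, and convexity of the Goldstein subdifferential yields
$$g\;:=\;\tfrac{1}{T}\sum_{t=0}^{T-1}\ptg{w_t}{y}(\nabla_t)\;\in\;\partial_\delta f(y).$$
Hence $\gradnormdelta{f}{y}\le \|g\|_{\tansp{y}}$, and the remaining task is to bound $\|g\|_{\tansp{y}}$ by the norm appearing on the right-hand side.

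The principal obstacle is that the right-hand side is a norm in $\tansp{x_0}$, not in $\tansp{y}$, and parallel transport between these two tangent spaces is path-dependent. I would fix any parallel transport $P:\tansp{y}\to \tansp{x_0}$ along a minimizing geodesic, so that $\|g\|_{\tansp{y}}=\|Pg\|_{\tansp{x_0}}$ by isometry. Denoting $v_t:=(\ptseq{S_{t+1}})^{-1}\ptg{w_t}{x_{t+1}}(\nabla_t)\in \tansp{x_0}$, the triangle inequality gives
$$\|Pg\|\;\le\;\Big\|\tfrac{1}{T}\sum_{t} v_t\Big\|\;+\;\tfrac{1}{T}\sum_{t}\big\|P\circ \ptg{w_t}{y}(\nabla_t)-v_t\big\|.$$
For each $t$, the two vectors in the final norm are parallel transports of the \emph{same} tangent vector $\nabla_t\in \tansp{w_t}$ into the common space $\tansp{x_0}$ along two distinct broken geodesics: path $A:\; w_t\to y\to x_0$ and path $B:\; w_t\to x_{t+1}\to x_t\to\cdots\to x_0$.

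I would bound this discrepancy by applying Lemma~\ref{lem:secfund} to each of the two broken geodesics, using the extrinsic projection $\projtan{x_0}(\nabla_t)$ as a common path-independent reference, which gives $\|P\circ\ptg{w_t}{y}(\nabla_t)-v_t\|\le C\|\nabla_t\|\cdot\big(\mathrm{length}(A)+\mathrm{length}(B)\big)$. A direct estimate shows $\mathrm{length}(A)\le \delta+\dist(y,x_0)\le 2\delta+2D$, where $\dist(y,x_0)\le \dist(y,w_0)+\dist(w_0,x_1)+\dist(x_1,x_0)\le \delta+2D$, while $\mathrm{length}(B)\le (t+2)D\le (T+1)D=\delta+D$. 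Combined with $\|\nabla_t\|\le L$ and $D=\delta/T$, the total contributes at most $3LC\delta$ up to $O(1/T)$ corrections, yielding the advertised bound after averaging in $t$ and chaining the inequalities.

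\textbf{Main difficulty.} The key step is converting the intrinsic, path-dependent discrepancy between two parallel transports into a quantitative curvature-based bound, which is exactly what Lemma~\ref{lem:secfund} provides: it expresses parallel transport along a broken geodesic in terms of the path-independent extrinsic projection, up to an error scaling with $C$ times the path length. The delicate bookkeeping is that the loop combining paths $A$ and $B$ decomposes into three $O(\delta)$ segments---$w_t\to y$, $y\to x_0$, and $x_0\to w_t$ along the $\{x_s\}$-sequence---each of length at most $\delta$ up to a subleading $D=\delta/T$ correction, and it is this scale separation that produces the constant $3$ in front of $L\delta C$.
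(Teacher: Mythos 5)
Your proposal is correct and follows essentially the same route as the paper: identify $\frac{1}{T}\sum_t\ptg{w_t}{y}(\nabla_t)\in\partial_\delta f(y)$, isometrically move everything to $\tansp{x_0}$, split by the triangle inequality, and control the discrepancy by a holonomy estimate derived from Lemma~\ref{lem:secfund}, with the identical path-length bookkeeping $2\delta+(t+4)D$. The only (cosmetic) difference is that the paper applies Lemma~\ref{lem:secfund} once around the closed loop $w_t\to x_{t+1}\to\cdots\to x_0\to y\to w_t$ (where the endpoint projection is the identity), whereas you apply it separately to the two open halves and triangle through $\projtan{x_0}(\nabla_t)$, which yields the same bound; both recover the constant $3$ after averaging in $t$ for $T$ moderately large.
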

The summands all live in $\tansp{x_0}$, so they can be summed. In the following sections, we design our algorithms and provide analytical bounds for the right-hand side of Lemma \ref{lem:Goldstein}.


\section{First Order Setting}
In this section, we develop algorithms to find \goldstat points of nonsmooth nonconvex stochastic objectives and provide theoretical guarantees for their finite-time convergence rates. Our algorithms adapt the O2NC conversion of \cite{cutkosky2023optimal} to the Riemannian setting, and instead of directly implementing online Riemannian optimization, our specific formulation of the problem allows us to utilize techniques of Euclidean online algorithms with transport operations between tangent spaces. 

\subsection{The Structure of RO2NC versus O2NC}
To understand O2NC, let us observe that for any algorithm with the update rule $x_{t+1} = x_{t}+\Delta_t$, one can write $f(x_{t+1}) = f(x_{t}) + \langle \Delta_t , \nabla_t \rangle$ where $\nabla_t = \int_{0}^{1}\nabla f(x_{t}+s\Delta_{t})ds$.  For Euclidean problems, the O2NC approach proposed by \cite{cutkosky2023optimal} designed an algorithm to overcome the nonconvexity by observing that 
\begin{equation}
     f(x_T) - f(x_0)=\sum_{t=0}^{T-1}  \innerprod{g_t}{\Delta_t - u } + \sum_{t=0}^{T-1}  \innerprod{\nabla_t - g_t}{ \Delta_t } + \sum_{t=0}^{T-1}  \innerprod{g_t}{u}, \nonumber
\end{equation} 
where $\Delta_t$ can be generated via an online learning algorithm and $g_t$ are the stochastic gradients provided to the online learning algorithm. The above equation holds for any $u$ in hindsight, so the first summation corresponds to the regret of the online algorithm. For the last term we have freedom to select a suitable $u$. Specifically, the selection of $u = -D\sum_{t=0}^{T-1} \E[g_t]/\|\sum_{t=0}^{T-1} \E[g_t]\|$ with a suitable parameter $D$ facilitates the Goldstein stationarity analysis. To have an efficient Riemannian adaptation for the O2NC algorithm, we encounter some major challenges: 

(i) Since the iterates are constrained to be on the manifold, we must use retractions in the updates, so we have $x_{t+1} = \retr{x_t}{\Delta_t}$. Also, for the gradient evaluation part, we use the update $w_t = \retr{x_t}{s\Delta_t}$ for a randomly selected $s\sim\un[0,1]$. In the Euclidean setting, two consecutive iterates are connected with a line segment as $x_{t+1} = x_{t}+\Delta_t$, parameterized to have zero acceleration. 
However, for manifolds with general retraction curves, we need to handle additional terms that contain the velocity and acceleration of the retraction curves.

(ii) Unlike the Euclidean case where we can directly add and subtract vectors, in the Riemannian setting different variables belong to various tangent spaces. Performing calculations using these variables necessitates operations such as parallel transport or projection, making the analysis more challenging compared to its Euclidean counterpart. This affects {\it both} algorithm design and technical analysis. For the design of RO2NC, we first consider the parallel transport in Section \ref{sec:paralleltransport} and then consider a more efficient version of our algorithm with projections in Section \ref{sec:vectortransport}. As for the analysis we cannot simply use the benchmark $u$ as in \cite{cutkosky2023optimal}, and we must cleverly design an optimal $u$, elaborated in the next section.
\subsection{RO2NC with Parallel Transport} \label{sec:paralleltransport}
In this section, we consider the RO2NC algorithm where  the actions $\Delta_t$ are updated via parallel transport operations. The updates are similar in vein to online gradient descent and allow us to rework the Euclidean regret analysis with the use of parallel transports. The algorithm is formally stated in Algorithm \ref{alg:O2NCptg} and works based on an inner loop indexed by $t$ (iteration) and an outer loop indexed by $k$ (epoch). In this section, except in the algorithm outline, we omit $k$ for notational convenience.  

At the $t$-th iteration in each epoch, we parallel transport both $\Delta_t$ and $g_t$ to update $\Delta_{t+1} = \text{clip}(\ptg{x_{t}}{x_{t+1}}(\Delta_{t}) - \eta g_{t}'),$ where $g_{t}'$ is the parallel transported version of $g_t$. We have the following convergence result for RO2NC. 
\begin{theorem}
\label{thm:O2NCptg}

Let $\delta,\epsilon \in (0,1)$ and suppose that Assumptions \ref{assum:boundedsecform},\ref{assum:retr_curves},\ref{assum:func_lips},\ref{assum:func_grad} hold. Running Algorithm \ref{alg:O2NCptg} with parallel transports for $N=KT$ rounds $T=O(\delta N)^{\frac{2}{3}}$ and $D=\delta/T$ gives an output that satisfies the following inequality 
\begin{equation}
\E[\gradnormdelta{f}{w_{out}} ] \le C_1(\delta N)^{-\frac{1}{3}} + 3\delta LC_2, \nonumber    
\end{equation}
where constants $C_1,C_2$ are given in the Appendix \ref{app:constant}.
\end{theorem}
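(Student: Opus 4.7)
The strategy would be to adapt the Euclidean O2NC argument of \cite{cutkosky2023optimal} while carefully accounting for three new sources of error introduced by Riemannian geometry: (i) the retraction curve is not a line, so the per-step descent inequality picks up a second-order term controlled by $C'$ from Assumption \ref{assum:retr_curves}; (ii) the iterates $\Delta_t$, $g_t$, and $\nabla_t := \rgrad{f(w_t)}$ live in different tangent spaces and must be transported to a common one; and (iii) the O2NC benchmark vector must be chosen in a tangent space in which both the online-learning regret bound and Lemma \ref{lem:Goldstein} are applicable. I would work within a single epoch of length $T$ at a time and then sum over the $K$ epochs.

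First, I would derive a per-iteration descent bound of the form
\begin{equation*}
f(x_{t+1}) - f(x_t) \le \innerprod{\Delta_t}{\ptg{w_t}{x_t}\nabla_t} + O(L C' D^2),
\end{equation*}
obtained by integrating $\tfrac{d}{ds} f(\retr{x_t}{s\Delta_t})$ over $s \in [0,1]$ and controlling the velocity and acceleration of the retraction curve via Assumption \ref{assum:retr_curves}, while using Lipschitzness of $f$ to absorb the $C'\norm{\Delta_t}^2$ term. Next, I would pull every vector back to $\tansp{x_0}$: define $\bar{\Delta}_t := (\ptseq{S_t})^{-1}(\Delta_t)$ and $\bar{g}_t := (\ptseq{S_{t+1}})^{-1}\circ \ptg{w_t}{x_{t+1}}(g_t)$, and $\bar{\nabla}_t$ analogously. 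Because parallel transport is an isometry, all inner products are preserved. Choosing a benchmark $\bar{u} \in \tansp{x_0}$ with $\norm{\bar{u}} \le D$, the classical three-way split gives
\begin{equation*}
\sum_{t=0}^{T-1}\innerprod{\bar{\Delta}_t}{\bar{\nabla}_t} = \underbrace{\sum_t \innerprod{\bar{g}_t}{\bar{\Delta}_t - \bar{u}}}_{\mathrm{regret}} + \underbrace{\sum_t \innerprod{\bar{\nabla}_t - \bar{g}_t}{\bar{\Delta}_t}}_{\mathrm{noise}} + \underbrace{\sum_t \innerprod{\bar{g}_t}{\bar{u}}}_{\mathrm{linear}}.
\end{equation*}

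Each term is then handled separately. Pulled back to $\tansp{x_0}$, the RO2NC update $\Delta_{t+1} = \mathrm{clip}(\ptg{x_t}{x_{t+1}}(\Delta_t) - \eta g'_t)$ becomes the vanilla clipped-OGD recursion $\bar{\Delta}_{t+1} = \mathrm{clip}(\bar{\Delta}_t - \eta \bar{g}_t)$, whose regret against any $\bar{u}$ with $\norm{\bar{u}}\le D$ is the familiar $O(D^2/\eta + \eta G^2 T)$; the noise term has zero conditional mean by Assumption \ref{assum:func_grad}; and the linear term is made as negative as possible by the choice $\bar{u} = -D\, \probE{\sum_t \bar{g}_t}/\norm{\probE{\sum_t \bar{g}_t}}$, which yields expectation $-DT\,\norm{\tfrac{1}{T}\probE{\sum_t \bar{g}_t}}$. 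Since the composite transport above exactly matches the one appearing in Lemma \ref{lem:Goldstein}, and since $\dist(w_t, y_k) \le \delta$ for all $t$ by the diameter bound $D = \delta/T$, Lemma \ref{lem:Goldstein} gives $\probE{\sum_t \innerprod{\bar{g}_t}{\bar{u}}} \le -DT\bigl(\probE{\gradnormdelta{f}{y_k}} - 3L\delta C\bigr)$ for an appropriate point $y_k$ in the $k$-th epoch.

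Telescoping the per-epoch descent bound over $K$ epochs, using that $f$ is lower bounded by Assumption \ref{assum:func_lips}, and selecting $w_{\mathrm{out}}$ as a uniform random index among the $\{y_k\}$, I obtain
\begin{equation*}
\probE{\gradnormdelta{f}{w_{\mathrm{out}}}} \lesssim \frac{f(x_0) - \inf f}{K D T} + \frac{D}{\eta T} + \frac{\eta G^2}{D} + L C' D + 3 L \delta C.
\end{equation*}
Setting $D = \delta/T$, $T \asymp (\delta N)^{2/3}$, and $\eta \asymp D/(G\sqrt{T})$ collapses the first four terms to $O((\delta N)^{-1/3})$, matching the claimed rate. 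The main obstacle, and the step that most departs from the Euclidean analysis, is the bookkeeping of the transports: the operator $(\ptseq{S_t})^{-1}$ must be inserted consistently so that the regret term reduces to a clean clipped-OGD recursion while the linear term's composite transport simultaneously coincides with the operator appearing inside Lemma \ref{lem:Goldstein}. Once that alignment is in place, the Riemannian-specific errors are confined to the additive pieces $L C' D$ and $3 L \delta C$, and the optimal Euclidean $(\delta N)^{-1/3}$ rate is recovered.
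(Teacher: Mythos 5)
Your proposal follows the same high-level strategy as the paper's proof: start from a per-step "fundamental theorem of calculus'' identity along the retraction curve, pull all tangent vectors back to a common tangent space via the composite transports $\ptseq{S_t}$, run the Euclidean O2NC three-way split so the clipped update becomes vanilla clipped OGD, choose a comparator aligned with the transported average gradient, and close via Lemma~\ref{lem:Goldstein}. Working explicitly in $\tansp{x_0}$ via the bar notation is a cleaner bookkeeping device than the paper's step-by-step transport in $\tansp{x_{t+1}}$, but by isometry of parallel transport the two are identical. Your decomposition of $\langle \Delta_t, \nabla_t\rangle$ into regret/noise/linear (the Euclidean split from the paper's introduction) also differs cosmetically from the paper's Term A/B/C split of $\langle g_t, \gamma'(s_t)\rangle$, but these are algebraically equivalent after taking expectations.

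There is one genuine gap in the linear-term treatment, which the paper handles carefully. The comparator must be the \emph{path-dependent} vector $\bar u = -D\sum_\tau \bar\nabla_\tau / \|\sum_\tau \bar\nabla_\tau\|$ (a function of the realized $\{w_\tau\}$) so that Lemma~\ref{lem:Goldstein} can be applied \emph{pathwise}, yielding $\gradnormdelta{f}{y} \le \|\tfrac1T\sum_\tau \bar\nabla_\tau\| + 3LC\delta$, and only then do you take expectations. If instead you set $\bar u$ in the direction of the fully averaged $\E[\sum_\tau \bar g_\tau]$, the linear term gives $-DT\|\tfrac1T\E[\sum\bar\nabla_\tau]\|$, and Jensen's inequality ($\|\E[\cdot]\| \le \E\|\cdot\|$) goes the wrong way when you try to convert back to a bound on $\E[\gradnormdelta{f}{w_{\mathrm{out}}}]$. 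Once you commit to the random comparator, the cross term $\E[\langle \bar u, \sum_t \bar\varepsilon_t\rangle]$ is no longer zero (because $\bar u$ depends on the realized $\{w_\tau\}$ and hence is correlated with the noise through the transports); the paper bounds it by Cauchy–Schwarz and the martingale orthogonality of the $\bar\varepsilon_t$'s to obtain an extra $D\sigma\sqrt{T}$, which you do not track. Neither of these issues changes the final rate — the $D\sigma\sqrt{T}$ term contributes $\sigma/\sqrt{T} = O((\delta N)^{-1/3})$, and the second-fundamental-form contribution $O(GCD)$ from $\gamma'(s_t) - \ptg{x_t}{w_t}\Delta_t$ that you omit alongside the $O(LC'D)$ piece is also dominated — but they are exactly the bookkeeping the paper must get right to make the argument go through, and they should be stated explicitly rather than absorbed silently.
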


\begin{remark}
\label{rem:pt}
To find a \goldstat point, we can choose $\delta' = \text{min} \{\delta, \frac{\epsilon}{6LC_2} \} \le \delta$ to get a ($\delta',\epsilon$)-stationary point. Then, choosing $N=O(\delta^{-1} \epsilon^{-3})$
is sufficient for $\E[\gradnormdelta{f}{w_{out}} ] \le \epsilon $. 
\end{remark}
Theorem \ref{thm:O2NCptg} indicates that for nonsmooth nonconvex optimization on Riemannian manifolds, RO2NC has the same complexity as its Euclidean counterpart \citep{cutkosky2023optimal}. The distortion caused by the curved geometry can be controlled with a suitable selection of parameters. Similar to the Euclidean setting, for smooth objectives, an $\epsilon$-stationary point can be found in $O(\epsilon^{-4})$ iterations by selecting $\delta = O(\epsilon)$. A key innovation in the analysis of Theorem \ref{thm:O2NCptg} is the choice of 
\begin{equation}
u_t = \ptseq{S_{t}} \Big(-D \frac{\sum_{\tau=0}^{T-1}(\ptseq{S_{\tau+1}})^{-1} \circ \ptg{w_\tau}{x_{\tau+1}}(\nabla_\tau)}{\norm{\sum_{\tau=0}^{T-1}(\ptseq{S_{\tau+1}})^{-1}\circ \ptg{w_\tau}{x_{\tau+1}}(\nabla_\tau) } } \Big), \nonumber
\end{equation}
where $\nabla_t=\E[g_{t}]$. The main idea behind this selection is to transport the gradient vectors along the path $\{x_T,...,x_1,x_0\}$ to create a base vector $u_0$ and then choose $u_t = \ptseq{S_{t}}(u_0) $. Although the best actions for Algorithm \ref{alg:O2NCptg} are time-dependent, they can still be analyzed with parallel transports.

\begin{algorithm}[tb]
    \caption{Riemannian Online to NonConvex (RO2NC) }
   \label{alg:O2NCptg}
\begin{algorithmic}

    \STATE {\bfseries Input:} $K \in \mathbb{N}$ , $T \in \mathbb{N}$, initial point $x_{0,T}\in \man$, clipping parameter $D$, step size $\eta=D/G\sqrt{T}$
    \FOR{$k=1$ {\bfseries to} $K$}
    \STATE Initialize $x_{k,0} = x_{k-1,T}$ , $\Delta_{k,0}=0$
    \FOR{$t=0$ {\bfseries to} $T-1$}
    \STATE $x_{k,t+1} = \retr{x_{k,t}}{\Delta_{k,t}}$
    \STATE $s_{k,t}\sim \un[0,1]$
    \STATE $w_{k,t} = \retr{x_{k,t}}{s_{k,t}\Delta_{k,t}}$
    \STATE get gradient $g_{k,t} = \rgrad\,F(w_{k,t},\nu_{k,t})$, where $\nu_{k,t}$ is a random index (based on data) 
    {\color{olive}\IF{Using Parallel Transport}
    \STATE $\begin{aligned}
        g_{k,t}' = \ptg{w_{k,t}}{x_{k,t+1}} &(g_{k,t}) \in \tansp{x_{k,t+1}}
    \end{aligned}$
    \STATE set $\Delta_{k,t+1} = \ptg{x_{k,t}}{x_{k,t+1}} (\Delta_{k,t}) - \eta g_{k,t}' $
    \ENDIF}
    {\color{teal}\IF{Using Projection}
    \STATE set $\Delta_{k,t+1} = \projtan{x_{k,t+1}}(\Delta_{k,t} - \eta  g_{k,t}) $
    \ENDIF}


    \STATE clip $\Delta_{k,t+1}$ on the convex set $\balltansp{x_{k,t+1}}{D}$

    \ENDFOR
    \STATE Set $\bar{w}_k $ to $w_{k,\floor{\frac{T}{2}}}$.
    \ENDFOR
    \STATE {Sample $w_{out} \sim \un\{ \bar{w}_1,..., \bar{w}_K\} $}
    \STATE{\bfseries Output:} $w_{out}$
    
\end{algorithmic}
\end{algorithm}

\subsection{RO2NC with Projection} \label{sec:vectortransport}

We now address the case where parallel transport operations are costly and RO2NC use projections instead. While more efficient, the introduction of projections bring forward technical challenges as they lack the isometry property (unlike parallel transports). In this case, at the $t$-th iteration in each epoch, we calculate $\Delta_t-\eta g_t$ in the ambient space and project it to the tangent space of $\man$ at iterate $x_{t+1}$ to obtain $\Delta_{t+1}$. The convergence of Algorithm \ref{alg:O2NCptg} with projection operations is presented in the following theorem.


\begin{theorem}
\label{thm:O2NCvt}

Let $\delta,\epsilon \in (0,1)$ and suppose that Assumptions \ref{assum:boundedsecform},\ref{assum:retr_curves},\ref{assum:func_lips},\ref{assum:func_grad} hold. Running Algorithm \ref{alg:O2NCptg} with projections for $N=KT$ rounds with $T=O(\delta N)^{\frac{2}{3}}$ and $D=\delta/T$ gives an output that satisfies the following inequality
$$\E[\gradnormdelta{f}{w_{out}}] \le C_3(\delta N)^{-\frac{1}{3}} + C_4 \delta^{\frac{1}{3}}N^{-\frac{2}{3}}+ \delta LC,$$
where constants $C_3,C_4$ are given in Appendix \ref{app:constant}.
\end{theorem}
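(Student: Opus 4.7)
The plan is to mirror the structure of the proof of Theorem \ref{thm:O2NCptg}, but to carefully quantify the non-isometric distortion introduced by replacing parallel transports with projections. First I would apply Lemma \ref{lem:Goldstein} to the output $w_{out}$ (picking the epoch via the uniform sampling and the midpoint choice $\bar w_k = w_{k,\lfloor T/2\rfloor}$), so that the problem reduces to bounding
\[
\Bigl\|\tfrac{1}{T}\sum_{t=0}^{T-1}(\ptseq{S_{t+1}})^{-1}\!\circ\ptg{w_{k,t}}{x_{k,t+1}}(\nabla_{k,t})\Bigr\|
\]
per epoch, plus the curvature remainder $3L\delta C$. The clipping parameter $D=\delta/T$ ensures all the distance hypotheses $\mathrm{dist}(x_{k,t},x_{k,t+1})\le D$ and $\mathrm{dist}(w_{k,t},\bar w_k)\le \delta$ are met through Assumption \ref{assum:retr_curves}.

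Next I would use the fundamental O2NC decomposition of $f(x_{k,T})-f(x_{k,0})$ via the retraction curve, writing
\[
f(x_{k,t+1})-f(x_{k,t})=\int_0^1\!\bigl\langle\tfrac{d}{ds}\retr{x_{k,t}}{s\Delta_{k,t}},\rgrad f(w_{k,t})\bigr\rangle ds
\]
plus an $O(\|\Delta_{k,t}\|^2)$ second-order remainder controlled by the retraction acceleration bound $C'\|\Delta_{k,t}\|^2$ in Assumption \ref{assum:retr_curves}. Subtracting a benchmark action $u_{k,t}$ at each step, I can then split the telescoping sum into an online-learning regret term $\sum_t\langle g_{k,t},\Delta_{k,t}-u_{k,t}\rangle$, a martingale-difference term $\sum_t\langle\nabla_{k,t}-g_{k,t},\Delta_{k,t}\rangle$ whose expectation vanishes, and an alignment term $\sum_t\langle g_{k,t},u_{k,t}\rangle$. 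Choosing $u_{k,t}$ analogously to Theorem \ref{thm:O2NCptg} (i.e.\ by parallel-transporting a base vector $u_{k,0}=-D\sum_\tau(\ptseq{S_{\tau+1}})^{-1}\!\circ\ptg{w_{k,\tau}}{x_{k,\tau+1}}(\nabla_{k,\tau})/\|\cdot\|$ along the iterate path) allows the alignment term to dominate and produce the $\gradnormdelta{f}{\bar w_k}$ quantity we wish to upper bound.

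The main obstacle lies in the regret analysis for the projection update $\Delta_{k,t+1}=\projtan{x_{k,t+1}}(\Delta_{k,t}-\eta g_{k,t})$. Because projection is non-expansive in the ambient Euclidean norm but non-isometric between tangent spaces, I cannot simply invoke the Riemannian OGD regret used for the parallel-transport variant. My plan is to carry out the potential-function analysis in the ambient Euclidean space (using $\Phi_t=\tfrac{1}{2}\|\Delta_{k,t}-u_{k,t}\|^2$), exploiting non-expansiveness of $\projtan{x_{k,t+1}}$ and the fact that $u_{k,t}\in\tansp{x_{k,t}}$, and then apply Lemma \ref{lem:secfund} to quantify the discrepancy
\[
\bigl\|\projtan{x_{k,t+1}}(\Delta_{k,t})-\ptg{x_{k,t}}{x_{k,t+1}}(\Delta_{k,t})\bigr\|\le C\|\Delta_{k,t}\|\,\mathrm{dist}(x_{k,t},x_{k,t+1})\le CD^2,
\]
and likewise for transporting $u_{k,t}$ and the gradients between consecutive tangent spaces. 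Each such slip contributes an $O(D^2)$ term per iteration; summed over $T$ steps and divided by $T$ in the Goldstein bound, this produces the extra $O(CD)$ error which, with $D=\delta/T$ and $T=(\delta N)^{2/3}$, becomes the $C_4\,\delta^{1/3}N^{-2/3}$ term in the statement.

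Finally I would collect everything: the standard OGD-style bound yields $\sum_t\langle g_{k,t},\Delta_{k,t}-u_{k,t}\rangle=O(D^2/\eta+\eta G^2 T)$, the retraction second-order term contributes $O(C'D^2 T)$, and the projection-slip term contributes $O(CD T)$; taking expectations, summing over epochs, dividing by $KT=N$, and optimizing with $\eta=D/(G\sqrt T)$, $T=(\delta N)^{2/3}$, $D=\delta/T$ gives the claimed rate $C_3(\delta N)^{-1/3}+C_4\,\delta^{1/3}N^{-2/3}+\delta L C$. The main technical care will be in bookkeeping the various transport-slip error terms and confirming that the choice of $u_{k,t}$ along the iterate path still makes the alignment sum telescope correctly under projections rather than parallel transports.
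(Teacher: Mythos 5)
Your proposal imports the entire parallel-transport machinery from Theorem~\ref{thm:O2NCptg} into the projection setting --- applying Lemma~\ref{lem:Goldstein}, choosing $u_{k,t}=\ptseq{S_t}(u_{k,0})$, and patching the mismatch via Lemma~\ref{lem:secfund}. The paper's actual proof is structurally quite different and cleaner: it \emph{does not} use Lemma~\ref{lem:Goldstein} at all. Instead, it works entirely with ambient Euclidean inner products and sums. The benchmark is a single ambient vector $u=-D\sum_t\nabla_t/\|\sum_t\nabla_t\|$ with $u_t=\projtan{x_t}(u)$, and the per-epoch target quantity is the ambient average $\|\frac{1}{T}\sum_t\nabla_t\|$ (not the $\ptseq{}$-transported sum). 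Two orthogonality facts then do most of the work: (i) $(v-\projtan{x}v)\perp\tansp{x}$ makes the potential $\|\Delta_{t+1}-u_t\|^2$ split cleanly into a nonexpansive part plus a controllable slip; and (ii) $\nabla_t\in\tansp{w_t}$ and $u-\projtan{w_t}(u)\perp\tansp{w_t}$ give $\langle\nabla_t,\projtan{w_t}(u)\rangle=\langle\nabla_t,u\rangle$, so the alignment term telescopes \emph{exactly} to $-D\|\sum_t\nabla_t\|$ with no further transport bookkeeping. The final link to $\gradnormdelta{f}{\bar w_k}$ is then a one-line triangle inequality $\gradnormdelta{f}{\bar w}\le\|\frac{1}{T}\sum_t\nabla_t\|+CL\delta$, again bypassing Lemma~\ref{lem:Goldstein}. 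For the projection slips the paper uses Lemma~\ref{lem:proj} (comparison of $\projtan{x}(v)$ and $\projtan{y}(v)$ for the \emph{same} ambient $v$, with the codimension factor $m$), not Lemma~\ref{lem:secfund}; the resulting constants $C_3,C_4$ carry $m$ and $m^2$, which your route would not naturally produce.

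The step you defer --- ``confirming that the choice of $u_{k,t}$ along the iterate path still makes the alignment sum telescope correctly under projections'' --- is in fact where your route runs into real friction. With $g_t\in\tansp{w_t}$ and $u_t=\ptseq{S_t}(u_0)\in\tansp{x_t}$, the ambient pairing $\langle g_t,u_t\rangle$ is not equal to $\langle(\ptseq{S_{t+1}})^{-1}\circ\ptg{w_t}{x_{t+1}}(g_t),u_0\rangle$: you would need to (a) project $u_t$ onto $\tansp{w_t}$, (b) replace that projection by the parallel transport $\ptg{x_t}{w_t}(u_t)$ via Lemma~\ref{lem:secfund}, and (c) reconcile the resulting path $(\ptseq{S_t})^{-1}\circ\ptg{w_t}{x_t}$ with the $(\ptseq{S_{t+1}})^{-1}\circ\ptg{w_t}{x_{t+1}}$ appearing in Lemma~\ref{lem:Goldstein}, incurring another $O(CD^2)$ per step. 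None of these corrections are quantified in your sketch, and together they are precisely the extra terms that the paper's ambient formulation eliminates by construction. Your estimate ``each slip is $O(D^2)$, summed over $T$ and divided by $T$ gives $O(CD)$'' also has an arithmetic slip (the Goldstein reduction divides the per-epoch sum by $DT$, not $T$, and the slip enters the regret with a $1/\eta$ factor), though the final order $O(D)=O(\delta^{1/3}N^{-2/3})$ happens to come out right. In short: your route could likely be completed, but it is both longer and less tight than the paper's ambient-space argument, and the pivotal telescoping step you flag is genuinely unresolved in your proposal rather than routine.
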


\begin{remark}
Theorem \ref{thm:O2NCvt} implies that $N=O(\delta^{-1} \epsilon^{-3})$ and $\delta=O(\epsilon)$ is sufficient to have $\E[\gradnormdelta{f}{w_{out}} ] \le \epsilon$, since $\delta < 1$ and $\delta^{\frac{1}{3}}N^{-\frac{2}{3}} < (\delta N)^{-\frac{1}{3}}$ order-wise. So, we can follow the same argument in Remark \ref{rem:pt}.
\end{remark}

Theorem \ref{thm:O2NCvt} shows that the same complexity can be achieved using projections instead of parallel transport operations, greatly improving the efficiency of RO2NC from an implementation perspective. While the implementation of the algorithm relies solely on projection operations, the analysis makes use of parallel transport to upper bound the term $\gradnormdelta{f}{w_{out}}$ in our theorem.


The projection-based analysis allows us to first calculate $u = -D\sum_{t=0}^{T-1} \nabla_t/\|\sum_{t=0}^{T-1} \nabla_t\|$ directly in the ambient space and then project it back to the tangent space to get $u_t=\projtan{x_t}(u) $, which again highlights a key novelty in our analysis.

\vspace{-.1in}
\section{Zeroth Order Setting}
In this section, we consider the case where gradient queries are unavailable and only noisy function evaluations can be obtained. In the context of online learning, this is analogous to a system with two-point bandit feedback. One common approach to address the nonsmooth problems in this setting is to derive a gradient estimator $g_{\delta}$ based on function values and use that in the gradient-based algorithms as a replacement of $\rgrad\,F$.

For zeroth order optimization in the Euclidean setting \cite{kornowski2024algorithm},
the gradient estimator is constructed with  $F(x\pm\delta u,\nu)$, where $u$ is uniformly sampled from a unit sphere. 
In the Riemannian setting, $x+\delta u $ is replaced with $\expm{x}(\delta u)$, and $u$ is sampled uniformly from a sphere in $\tansp{x}$. Then, the Riemannian gradient estimator is given as follows,
\begin{equation}
\label{eq:zero}
    g_{\delta}(x) = \frac{d}{2\delta} (F( \expm{x}(\delta u),\nu) - F(\expm{x}(-\delta u),\nu) ) u,
\end{equation}
where $d$ is the dimension of $\tansp{x}$. Two major problems arise from the Riemannian formulation: (i) In the Euclidean setting, we have $\nabla f_{\delta}(x) = \E_{u}[\nabla f(x+\delta u)] \in \partial_{\delta}f(x)$, which implies that the expectation of the gradient estimator is included in the Goldstein subdifferential set \citep{lin2022gradient}. However, in the Riemannian setting, stating the relation between $\E_u[g_{\delta}(x)]$ and $ \partial_{\delta}f(x)$ is a challenge in itself.  (ii) Also, the geometric relation  $\partial_{v} f_{\rho}(x) \subseteq \partial_{v+\rho} f(x) $ used in the Euclidean zeroth order analysis \citep{kornowski2024algorithm} does not hold in the Riemannian setting due to distortion caused by the manifold geometry. 

We first define $h_{\delta}(x) := \int f \circ \expm{x}(u) d p_x(u)$, where $p_x$ is a uniform measure over $\balltansp{x}{\delta} \subset \tansp{x}$. By analyzing the relationship between $\rgrad{h_{\delta}}(x)$ and $\partial_{\delta}f(x)$ in Lemma \ref{lem:smoothing_bound}, we introduce the following lemma to address the above two challenges.

\begin{lemma}
\label{lem:RiemGeomLem}  
Consider a point $y$ and a set of points $\{x_t\}_{t=0}^{T-1}$ which satisfy $\dist(y,x_t) \le \frac{\delta}{2}$. The gradient estimator $g_{\delta}$ satisfies $\E_u[g_{\delta}(x_t)] \in \partial_{\delta}f(x_t) + \balltansp{x}{ \frac{1}{3} K_s L \delta^2} $ and $\ptg{x_t}{y}(\partial_{\frac{\delta}{2}}f(x_t)) \subset \partial_{\delta} f(y) + \balltansp{y}{2CL\delta},$ where $+$ denotes the Minkowski sum of two sets, $C$ denotes the bound on the second fundamental form (Assumption \ref{assum:boundedsecform}) and $K_s$ bounds the sectional curvature of the manifold.
\end{lemma}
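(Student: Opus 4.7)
The plan is to treat the two inclusions separately, since they rely on quite different ingredients: the first is an averaging/smoothing identity combined with the already-stated Lemma \ref{lem:smoothing_bound}, while the second is a geometric comparison of parallel transport along different paths.

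\textbf{Part 1 ($\E_u[g_\delta(x_t)] \in \partial_\delta f(x_t)+\balltansp{x}{\tfrac13 K_s L\delta^2}$).} First I would establish the Riemannian analogue of the standard Euclidean smoothing identity: for the auxiliary objective $h_\delta(x)=\int f\circ\expm{x}(u)\,dp_x(u)$ with $p_x$ uniform on $\balltansp{x}{\delta}$, one has $\E_u[g_\delta(x)]=\rgrad{h_\delta}(x)$. This is the two-point/divergence-theorem trick: writing the sphere average as a boundary integral over $\balltansp{x}{\delta}$ and applying Stokes' theorem on the tangent space turns the antisymmetric finite difference in the definition of $g_\delta$ into a gradient of the ball-averaged function, once we note that the uniform measure on the sphere is the derivative of the uniform measure on the ball in $\delta$. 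Given this identity, the claim reduces immediately to Lemma \ref{lem:smoothing_bound}, which bounds $\|\rgrad{h_\delta}(x)-\Pi_{\partial_\delta f(x)}\rgrad{h_\delta}(x)\|$ by $\tfrac13 K_s L\delta^2$ via the usual Jacobi/curvature expansion of the exponential map.

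\textbf{Part 2 (transport inclusion).} I would prove this pointwise and then extend by convexity. Take an arbitrary generator $\ptg{z}{x_t}(w)$ of $\partial_{\delta/2}f(x_t)$ with $z\in B(x_t,\delta/2)$ and $w\in\partial f(z)$. Since $\dist(y,x_t)\le\delta/2$, the triangle inequality gives $\dist(z,y)\le\delta$, so the direct parallel transport $\ptg{z}{y}(w)$ is itself a generator of $\partial_\delta f(y)$. Hence it suffices to bound
\[
\bigl\|\ptg{x_t}{y}\!\circ\!\ptg{z}{x_t}(w)-\ptg{z}{y}(w)\bigr\|.
\]
To control this, I would compare both transports to the same extrinsic anchor, namely $\projtan{y}(w)$. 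Lemma \ref{lem:secfund} applies to broken geodesics: the path $z\to x_t\to y$ is a broken geodesic of length $\le\delta$, giving $\|\ptg{x_t}{y}\!\circ\!\ptg{z}{x_t}(w)-\projtan{y}(w)\|\le CL\delta$; the direct minimizing geodesic $z\to y$ has length $\le\delta$, giving $\|\ptg{z}{y}(w)-\projtan{y}(w)\|\le CL\delta$. Triangle inequality yields the desired $2CL\delta$ bound on the generator level.

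\textbf{Extension to the closed convex hull.} An arbitrary $v\in\partial_{\delta/2}f(x_t)$ is a (limit of) convex combinations $v=\sum_i\alpha_i\,\ptg{z_i}{x_t}(w_i)$. Since parallel transport is linear and isometric, $\ptg{x_t}{y}(v)=\sum_i\alpha_i\,\ptg{x_t}{y}\!\circ\!\ptg{z_i}{x_t}(w_i)$, and matching it against the convex combination $\sum_i\alpha_i\,\ptg{z_i}{y}(w_i)\in\partial_\delta f(y)$ gives a residual of norm at most $\sum_i\alpha_i\cdot 2CL\delta=2CL\delta$ by the triangle inequality. Passing to the closure preserves the bound, completing the inclusion.

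\textbf{Anticipated main obstacle.} Part 1 is routine once the smoothing identity is in hand; the real subtlety is whether one needs the ball-mean or the sphere-mean formulation, and justifying differentiation under the integral across the exponential map. Part 2's core difficulty is that parallel transport is path-dependent, so one cannot simply concatenate; the trick of routing both transports through the common extrinsic reference $\projtan{y}(w)$ is what makes Lemma \ref{lem:secfund} applicable to both simultaneously and what buys the clean $2CL\delta$ bound rather than a curvature-squared term from holonomy estimates.
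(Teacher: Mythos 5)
Your proof is correct and follows essentially the same structure as the paper's: Part 1 invokes the identity $\E_u[g_\delta(x)]=\rgrad h_\delta(x)$ and then Lemma~\ref{lem:smoothing_bound}, and Part 2 bounds the transport discrepancy on generators via Lemma~\ref{lem:secfund} and extends by convexity. The only cosmetic difference is in Part 2, where you apply Lemma~\ref{lem:secfund} twice (to the broken path $z\to x_t\to y$ and to the direct geodesic $z\to y$, each anchored at $\projtan{y}(w)$) and use the triangle inequality, whereas the paper applies it once to the closed loop $z\to x_t\to y\to z$ of length $\le 2\delta$; both give the identical $2CL\delta$ bound.
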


With the help of Lemmas \ref{lem:RiemGeomLem} and \ref{lem:smoothing_bound} we bound $\norm{\rgrad f(w_{out})}_{\delta}$ in terms of $\norm{\rgrad h_{\frac{\delta}{2}}(w_{out})}_{\frac{\delta}{2}}$, and we then employ that inequality for the following theorem, providing the finite-time convergence guarantee using the zeroth order gradient estimator.


\begin{theorem}\label{thm:zeroorderpt}
Let $\delta,\epsilon \in (0,1)$ and suppose that Assumptions \ref{assum:boundedsecform},\ref{assum:retr_curves},\ref{assum:func_lips} hold. Running Algorithm \ref{alg:O2NCptg} for $N=KT$ rounds with $T=O(\delta N)^{\frac{2}{3}}$ and $D=\delta/T$ using the zeroth order gradient estimator in \eqref{eq:zero} gives an output that satisfies 
$$\E[\gradnormdelta{f}{w_{out}} ] \le C_5(\delta N)^{-\frac{1}{3}} + \delta LC_6,$$
where $C_5$ and $C_6$ are given in Appendix \ref{app:constant}.
\end{theorem}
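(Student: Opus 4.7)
The plan is to reduce the zeroth order setting to the first order descent analysis of Theorem \ref{thm:O2NCptg}, applied to the smoothed surrogate $h_{\delta/2}$, and then convert the resulting transported-gradient sum bound into a bound on $\gradnormdelta{f}{w_{out}}$ using the geometric inclusions from Lemma \ref{lem:RiemGeomLem}. The idea is that the zeroth order estimator is built precisely so that, in expectation, it is a Riemannian gradient of some smooth function ($h_{\delta/2}$) that is in turn close, in a subdifferential-set sense, to $f$.

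First I would observe that, by construction, $\E_{u,\nu}[g_{\delta/2}(x)] = \rgrad h_{\delta/2}(x)$, so the algorithm is fed unbiased Riemannian stochastic gradients of $h_{\delta/2}$. By Lemma \ref{lem:riemLips}, $h_{\delta/2}$ is $O(L)$-Lipschitz, and the Lipschitz continuity of $F(\cdot,\nu)$ forces $\|g_{\delta/2}(x)\|\le dL(\nu)$, yielding a second-moment bound $\E[\|g_{\delta/2}\|^2]\le d^2 L^2$. Thus Assumptions \ref{assum:func_lips} and \ref{assum:func_grad} are satisfied by $h_{\delta/2}$ with $G$ replaced by $dL$. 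The intra-epoch retraction-based descent step, the online regret bound along parallel transports, and the benchmark choice $u_t=\ptseq{S_t}(u_0)$ from the proof of Theorem \ref{thm:O2NCptg} then carry over verbatim, with $h_{\delta/2}$ playing the role of $f$. Choosing $T=O((\delta N)^{2/3})$ and $D=\delta/T$ yields
\[
\E\Big[\Big\|\tfrac{1}{T}\sum_{t=0}^{T-1}(\ptseq{S_{t+1}})^{-1}\circ\ptg{w_t}{x_{t+1}}(\rgrad h_{\delta/2}(w_t))\Big\|\Big]\le C(\delta N)^{-1/3}
\]
for each epoch, where $C$ absorbs $d$, $L$, $C'$, and the curvature bound.

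To translate this into a bound on $\gradnormdelta{f}{w_{out}}$, I would mimic the proof of Lemma \ref{lem:Goldstein}, but invoking Lemma \ref{lem:RiemGeomLem} in place of the identity $\nabla_t\in\partial f(w_t)$. Since $\dist(w_t,w_{out})\le DT=\delta/2$ throughout an epoch, composing the two inclusions of Lemma \ref{lem:RiemGeomLem} and using the isometry of parallel transport gives $\ptg{w_t}{w_{out}}(\rgrad h_{\delta/2}(w_t))\in\partial_\delta f(w_{out})+\balltansp{w_{out}}{O(L\delta)}$. By convexity and closedness of $\partial_\delta f(w_{out})$, the averaged transported quantity lies within $O(L\delta)$ of some element of $\partial_\delta f(w_{out})$. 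The discrepancy between transporting from $w_t$ to $w_{out}$ directly versus through the broken geodesic path $S_{t+1}$ is controlled by curvature times path length, an additional $O(L\delta)$ term analogous to the $3LC\delta$ residual of Lemma \ref{lem:Goldstein}. Triangle inequality then gives $\E[\gradnormdelta{f}{w_{out}}]\le C_5(\delta N)^{-1/3}+\delta LC_6$, as claimed.

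The main obstacle is the careful bookkeeping of the Minkowski-sum error radii, namely the $\tfrac{1}{3}K_s L\delta^2$ and $2CL\delta$ terms of Lemma \ref{lem:RiemGeomLem} together with the curvature-driven $3LC\delta$ discrepancy from the path-comparison step, making sure they all aggregate into the additive $\delta LC_6$ residual without inflating the leading-order $(\delta N)^{-1/3}$ rate. A subsidiary concern is absorbing the dimension-dependent factor $d$ from the bound $G=dL$ cleanly into $C_5$; it enters only through the $DG\sqrt{T}$ online regret term and therefore preserves the same dependence on $N$ and $\delta$ as in the first-order case, matching the optimal zeroth order dimension dependence of \cite{kornowski2024algorithm}.
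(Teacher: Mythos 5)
Your overall strategy is correct and mirrors the paper's: feed the algorithm the unbiased estimator $g_{\delta/2}$ of $\rgrad h_{\delta/2}$, run the first-order analysis of Theorem~\ref{thm:O2NCptg} with $h_{\delta/2}$ in place of $f$, and then convert the Goldstein norm of $h_{\delta/2}$ at radius $\delta/2$ into the Goldstein norm of $f$ at radius $\delta$ via the two inclusions of Lemma~\ref{lem:RiemGeomLem}. Your way of composing those inclusions (Minkowski-sum radii of $\tfrac{1}{12}K_sL\delta^2$ and $2CL\delta$) is exactly the paper's chain $\partial_{\delta/2}h_{\delta/2}(x)\subseteq\partial_\delta f(x)+\balltansp{x}{2CL\delta+\tfrac{1}{12}K_sL\delta^2}$.

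However, there is a genuine gap in your second-moment bound, and it changes the constant $C_5$ that the theorem pins down in Appendix~\ref{app:constant}. You argue $\|g_{\delta/2}(x)\|\le dL(\nu)$ pointwise (true, by Lipschitzness and $\dist(\expm{x}(\tfrac{\delta}{2}u),\expm{x}(-\tfrac{\delta}{2}u))\le\delta$), giving $\E[\|g_{\delta/2}\|^2]\le d^2L^2$ and hence $G=dL$, so $C_5=O(d)$. The paper does not use this crude pointwise bound: it applies the concentration-of-measure argument of \cite[Lemma~E.1]{lin2022gradient} to the pullback $u\mapsto F(\expm{x}(\delta u),\nu)$, noting via a Jacobi-field comparison that this map is Lipschitz with constant $L(\nu)\delta\cdot\tfrac{\sinh(\sqrt{K_s}\delta)}{\sqrt{K_s}\delta}$, and obtains $\E[\|g_\delta\|^2]\le 16\sqrt{2\pi}\,d\big(L\tfrac{\sinh(\sqrt{K_s}\delta)}{\sqrt{K_s}\delta}\big)^2$, i.e.\ $G=O(\sqrt{d}\,L)$ rather than $O(dL)$. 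This saves a factor of $d$ in $G^2$, and therefore a factor of $\sqrt{d}$ in $C_5$, which is precisely what lets the paper match the dimension dependence it states. Your closing sentence claiming that $G=dL$ already ``matches the optimal zeroth order dimension dependence of \cite{kornowski2024algorithm}'' is therefore incorrect: it gives $C_5=O(d)$ and hence $N=O(d^3\delta^{-1}\epsilon^{-3})$, worse than the $O(d^{3/2}\delta^{-1}\epsilon^{-3})$ that the paper's $C_5$ yields. You also omit the curvature factor $\tfrac{\sinh(\sqrt{K_s}\delta)}{\sqrt{K_s}\delta}$ that arises from comparing the intrinsic distance $\dist(\expm{x}(\delta u),\expm{x}(\delta v))$ to $\delta\|u-v\|$; this factor appears explicitly in the paper's $C_5$. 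To close the gap, replace the pointwise estimate by the two-point concentration bound applied to the pullback, carrying the $\sinh$-factor through.
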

The theorem considers Algorithm \ref{alg:O2NCptg} with parallel transports, but a similar result can be obtained with projections. In our analysis, we show that although extra terms arise due to the introduction of zero order gradient estimator and the approximation to Goldstein subdifferential sets, these additional terms can be controlled by a careful adjustment of $\delta$. Consequently, the overall iteration complexity with respect to ($\delta$,$\epsilon$) can be maintained in the zeroth order setting. 

\section{Numerical Experiments}
\label{sec:experiments}
We provide the following numerical experiments to 
validate our results.

{\textbf{Model and Setup.}} We consider the sparse principal component problem defined on the Euclidean unit sphere $\mathbb{S}^{n-1}$ in $\R^{n}$. The parallel transport operations have closed-form solutions on spheres. The objective function can be written as
$ \min_{x \in \mathbb{S}^{n-1}}\{-x^\top Ax+ \mu \norm{x}_1\},$ where $A=\E[\nu\nu^\top]$ and $\nu\sim \mathcal{N}(0,A)$ is sampled from a multivariate Gaussian distribution. 
For RO2NC, we choose our retraction curves to be $\retr{x}{v}=({x+v})/{\norm{x+v}}$, and we use exponential mappings for calculating the gradient estimator in ZO-RO2NC.

{\textbf{Evaluation and Results.}}
Since the direct evaluation of the Goldstein subdifferential set and $\norm{\rgrad f(w_{out})}_{\delta}$ requires calculation over a convex hull, which is highly impractical, we instead evaluate $\norm{\frac{1}{T} \sum_{t=0}^{T-1} (\ptseq{S_{t+1}})^{-1}\circ\ptg{w_t}{x_{t+1}} (\rgrad{f}{(w_t)})}$ as an upper bound. 

\begin{figure*}[t!]
\begin{center}
\centerline{\includegraphics[width=0.99\columnwidth]{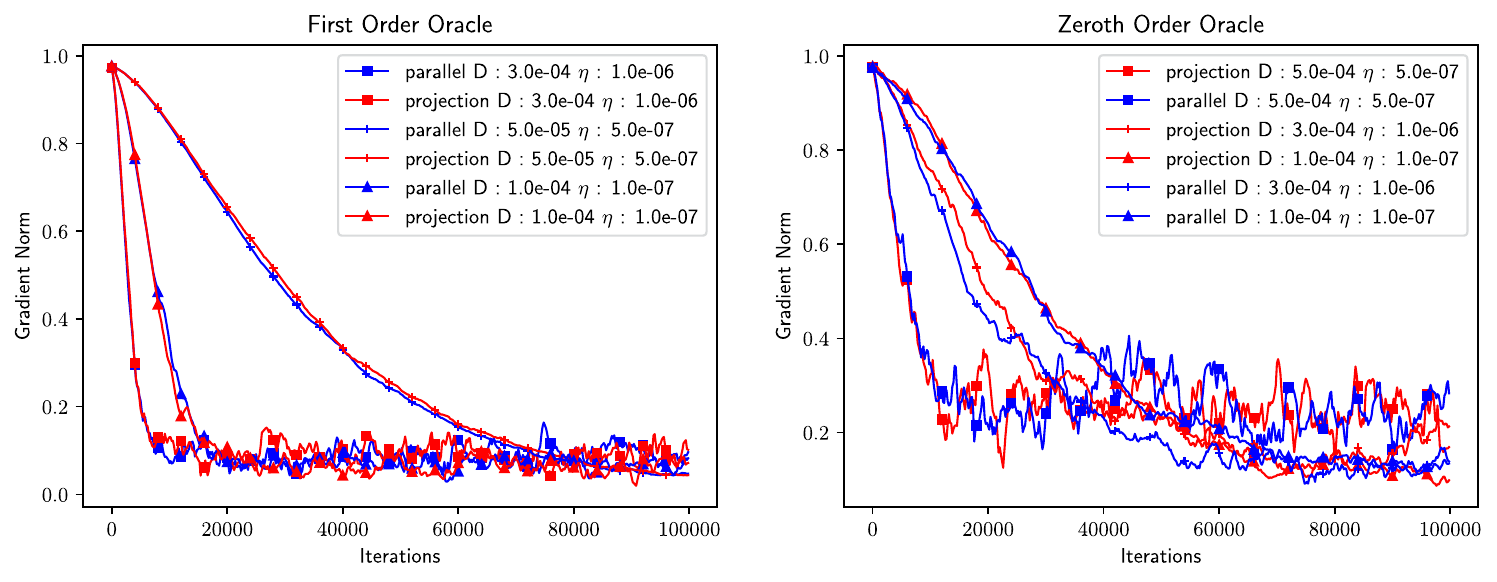}}
\caption{Evaluation of gradient norms in both settings; $D$: clipping parameter, $\eta$: step size.}
\label{fig_combined}
\end{center}
\vskip -0.18in
\end{figure*}

In our experiments for RO2NC, we illustrate the decay of the gradient norms. We run Algorithm \ref{alg:O2NCptg} using both parallel transport and projection operations for $K=500$ epochs, each consisting of $T=200$ iterations. Convergence of the algorithms depends on the selection of parameters $D$ and $\eta$. For both settings, $\eta$ is chosen orders of magnitude smaller than $D$, and the plot is reported in Fig. \ref{fig_combined}. We can see that the performance of the projection approach is comparable with that of parallel transport approach. 
Optimization with the zeroth order oracle is more sensitive to the selection of parameters and the convergence is slower than the first order setting, but with a suitable choice of parameters ZO-RO2NC indeed converges.
\section{Conclusion, Limitations, and Future Work}

We addressed the finite-time analysis of nonsmooth, nonconvex stochastic Riemannian optimization. We proposed the RO2NC algorithm, which is guaranteed to find the Riemannian extension of $(\delta, \epsilon)$-Goldstein stationary points with $O(\delta^{-1}\epsilon^{-3})$ sample complexity. When gradient information is unavailable, we introduced ZO-RO2NC with a zeroth order gradient estimator, which also achieves optimal sample complexity. While our stationarity condition is defined via parallel transport of the Clarke subdifferential, alternative notions, such as different transport maps or subdifferentials, may also be considered. Furthermore, a deeper analysis of curvature effects and their role in optimality remains an open direction. Finally, designing adaptive algorithms that can exploit curvature information more effectively can be an interesting problem for future research.

\section*{Acknowledgments}
The authors gratefully acknowledge the support of NSF ECCS-2240788 Award as well as Northeastern TIER 1 Program for this research.

\bibliographystyle{plain}
\bibliography{references}

\begin{thebibliography}{10}

\bibitem{absil2008optimization}
P-A Absil, Robert Mahony, and Rodolphe Sepulchre.
\newblock {\em Optimization algorithms on matrix manifolds}.
\newblock Princeton University Press, 2008.

\bibitem{arjovsky2016unitary}
Martin Arjovsky, Amar Shah, and Yoshua Bengio.
\newblock Unitary evolution recurrent neural networks.
\newblock In {\em International conference on machine learning}, pages
  1120--1128. PMLR, 2016.

\bibitem{bonnabel2013stochastic}
Silvere Bonnabel.
\newblock Stochastic gradient descent on riemannian manifolds.
\newblock {\em IEEE Transactions on Automatic Control}, 58(9):2217--2229, 2013.

\bibitem{borckmans2014riemannian}
Pierre~B Borckmans, S~Easter Selvan, Nicolas Boumal, and P-A Absil.
\newblock A riemannian subgradient algorithm for economic dispatch with
  valve-point effect.
\newblock {\em Journal of computational and applied mathematics}, 255:848--866,
  2014.

\bibitem{boumal2023introduction}
Nicolas Boumal.
\newblock {\em An introduction to optimization on smooth manifolds}.
\newblock Cambridge University Press, 2023.

\bibitem{chen2024decentralized}
Hengchao Chen and Qiang Sun.
\newblock Decentralized online riemannian optimization with dynamic
  environments.
\newblock {\em arXiv preprint arXiv:2410.05128}, 2024.

\bibitem{chen2021decentralized}
Shixiang Chen, Alfredo Garcia, Mingyi Hong, and Shahin Shahrampour.
\newblock Decentralized riemannian gradient descent on the stiefel manifold.
\newblock In {\em International Conference on Machine Learning}, pages
  1594--1605. PMLR, 2021.

\bibitem{chen2021distributed}
Shixiang Chen, Alfredo Garcia, and Shahin Shahrampour.
\newblock On distributed nonconvex optimization: Projected subgradient method
  for weakly convex problems in networks.
\newblock {\em IEEE Transactions on Automatic Control}, 67(2):662--675, 2021.

\bibitem{chen2020proximal}
Shixiang Chen, Shiqian Ma, Anthony Man-Cho~So, and Tong Zhang.
\newblock Proximal gradient method for nonsmooth optimization over the stiefel
  manifold.
\newblock {\em SIAM Journal on Optimization}, 30(1):210--239, 2020.

\bibitem{chen2024nonsmooth}
Shixiang Chen, Shiqian Ma, Anthony Man-Cho~So, and Tong Zhang.
\newblock Nonsmooth optimization over the stiefel manifold and beyond: Proximal
  gradient method and recent variants.
\newblock {\em SIAM Review}, 66(2):319--352, 2024.

\bibitem{cherian2016riemannian}
Anoop Cherian and Suvrit Sra.
\newblock Riemannian dictionary learning and sparse coding for positive
  definite matrices.
\newblock {\em IEEE transactions on neural networks and learning systems},
  28(12):2859--2871, 2016.

\bibitem{criscitiello2023accelerated}
Christopher Criscitiello and Nicolas Boumal.
\newblock An accelerated first-order method for non-convex optimization on
  manifolds.
\newblock {\em Foundations of Computational Mathematics}, 23(4):1433--1509,
  2023.

\bibitem{cutkosky2023optimal}
Ashok Cutkosky, Harsh Mehta, and Francesco Orabona.
\newblock Optimal stochastic non-smooth non-convex optimization through
  online-to-non-convex conversion.
\newblock In {\em International Conference on Machine Learning}, pages
  6643--6670. PMLR, 2023.

\bibitem{davis2019stochastic}
Damek Davis and Dmitriy Drusvyatskiy.
\newblock Stochastic model-based minimization of weakly convex functions.
\newblock {\em SIAM Journal on Optimization}, 29(1):207--239, 2019.

\bibitem{de2016new}
Glaydston de~Carvalho~Bento, Jo{\~a}o~Xavier da~Cruz~Neto, and Paulo~Roberto
  Oliveira.
\newblock A new approach to the proximal point method: convergence on general
  riemannian manifolds.
\newblock {\em Journal of Optimization Theory and Applications}, 168:743--755,
  2016.

\bibitem{deng2024oracle}
Kangkang Deng, Jiang Hu, and Zaiwen Wen.
\newblock Oracle complexity of augmented lagrangian methods for nonsmooth
  manifold optimization.
\newblock {\em arXiv preprint arXiv:2404.05121}, 2024.

\bibitem{deng2025single}
Kangkang Deng, Zheng Peng, and Weihe Wu.
\newblock Single-loop {${\mathcal O}(\epsilon^{-3})$} stochastic smoothing
  algorithms for nonsmooth riemannian optimization.
\newblock {\em arXiv preprint arXiv:2505.09485}, 2025.

\bibitem{dirr2007nonsmooth}
Gunther Dirr, Uwe Helmke, and Christian Lageman.
\newblock Nonsmooth riemannian optimization with applications to sphere packing
  and grasping.
\newblock In {\em Lagrangian and Hamiltonian Methods for Nonlinear Control
  2006: Proceedings from the 3rd IFAC Workshop, Nagoya, Japan, July 2006},
  pages 29--45. Springer, 2007.

\bibitem{edelman1998geometry}
Alan Edelman, Tom{\'a}s~A Arias, and Steven~T Smith.
\newblock The geometry of algorithms with orthogonality constraints.
\newblock {\em SIAM journal on Matrix Analysis and Applications},
  20(2):303--353, 1998.

\bibitem{fei2025survey}
Yanhong Fei, Yingjie Liu, Chentao Jia, Zhengyu Li, Xian Wei, and Mingsong Chen.
\newblock A survey of geometric optimization for deep learning: from euclidean
  space to riemannian manifold.
\newblock {\em ACM Computing Surveys}, 57(5):1--37, 2025.

\bibitem{fei2022vit}
Yanhong Fei, Yingjie Liu, Xian Wei, and Mingsong Chen.
\newblock O-vit: Orthogonal vision transformer.
\newblock {\em arXiv preprint arXiv:2201.12133}, 2022.

\bibitem{flaxman2004online}
Abraham~D Flaxman, Adam~Tauman Kalai, and H~Brendan McMahan.
\newblock Online convex optimization in the bandit setting: gradient descent
  without a gradient.
\newblock {\em arXiv preprint cs/0408007}, 2004.

\bibitem{grohs2016varepsilon}
P~Grohs and S~Hosseini.
\newblock $\varepsilon$-subgradient algorithms for locally lipschitz functions
  on riemannian manifolds.
\newblock {\em Advances in Computational Mathematics}, 42(2):333--360, 2016.

\bibitem{grohs2020handbook}
Philipp Grohs, Martin Holler, and Andreas Weinmann.
\newblock {\em Handbook of Variational Methods for Nonlinear Geometric Data}.
\newblock Springer, 2020.

\bibitem{grohs2016nonsmooth}
Philipp Grohs and Seyedehsomayeh Hosseini.
\newblock Nonsmooth trust region algorithms for locally lipschitz functions on
  riemannian manifolds.
\newblock {\em IMA Journal of Numerical Analysis}, 36(3):1167--1192, 2016.

\bibitem{he2024riemannian}
Chang He, Zhaoye Pan, Xiao Wang, and Bo~Jiang.
\newblock {R}iemannian accelerated zeroth-order algorithm: Improved robustness
  and lower query complexity.
\newblock In {\em International Conference on Machine Learning}, pages
  17972--18009. PMLR, 2024.

\bibitem{heidel2018riemannian}
Gennadij Heidel and Volker Schulz.
\newblock A riemannian trust-region method for low-rank tensor completion.
\newblock {\em Numerical Linear Algebra with Applications}, 25(6):e2175, 2018.

\bibitem{hoseini2023proximal}
Najmeh Hoseini~Monjezi, Soghra Nobakhtian, and Mohamad~Reza Pouryayevali.
\newblock A proximal bundle algorithm for nonsmooth optimization on riemannian
  manifolds.
\newblock {\em IMA Journal of Numerical Analysis}, 43(1):293--325, 2023.

\bibitem{hosseini2015matrix}
Reshad Hosseini and Suvrit Sra.
\newblock Matrix manifold optimization for gaussian mixtures.
\newblock {\em Advances in neural information processing systems}, 28, 2015.

\bibitem{hosseini2019nonsmooth}
Seyedehsomayeh Hosseini, Boris~Sholimovich Mordukhovich, and Andr{\'e}
  Uschmajew.
\newblock {\em Nonsmooth optimization and its applications}.
\newblock Springer, 2019.

\bibitem{hosseini2011generalized}
Seyedehsomayeh Hosseini and MR~Pouryayevali.
\newblock Generalized gradients and characterization of epi-lipschitz sets in
  riemannian manifolds.
\newblock {\em Nonlinear Analysis: Theory, Methods \& Applications},
  74(12):3884--3895, 2011.

\bibitem{hosseini2017riemannian}
Seyedehsomayeh Hosseini and Andr{\'e} Uschmajew.
\newblock A riemannian gradient sampling algorithm for nonsmooth optimization
  on manifolds.
\newblock {\em SIAM Journal on Optimization}, 27(1):173--189, 2017.

\bibitem{huang2022riemannian}
Wen Huang and Ke~Wei.
\newblock Riemannian proximal gradient methods.
\newblock {\em Mathematical Programming}, 194(1):371--413, 2022.

\bibitem{jawanpuria2020geometry}
Pratik Jawanpuria, Mayank Meghwanshi, and Bamdev Mishra.
\newblock Geometry-aware domain adaptation for unsupervised alignment of word
  embeddings.
\newblock In {\em Proceedings of the 58th Annual Meeting of the Association for
  Computational Linguistics}, pages 3052--3058, 2020.

\bibitem{jordan2022first}
Michael Jordan, Tianyi Lin, and Emmanouil-Vasileios Vlatakis-Gkaragkounis.
\newblock First-order algorithms for min-max optimization in geodesic metric
  spaces.
\newblock {\em Advances in Neural Information Processing Systems},
  35:6557--6574, 2022.

\bibitem{kim2022accelerated}
Jungbin Kim and Insoon Yang.
\newblock Accelerated gradient methods for geodesically convex optimization:
  Tractable algorithms and convergence analysis.
\newblock In {\em International Conference on Machine Learning}, pages
  11255--11282. PMLR, 2022.

\bibitem{kornowski2024algorithm}
Guy Kornowski and Ohad Shamir.
\newblock An algorithm with optimal dimension-dependence for zero-order
  nonsmooth nonconvex stochastic optimization.
\newblock {\em Journal of Machine Learning Research}, 25(122):1--14, 2024.

\bibitem{kovnatsky2016madmm}
Artiom Kovnatsky, Klaus Glashoff, and Michael~M Bronstein.
\newblock Madmm: a generic algorithm for non-smooth optimization on manifolds.
\newblock In {\em Computer Vision--ECCV 2016: 14th European Conference,
  Amsterdam, The Netherlands, October 11-14, 2016, Proceedings, Part V 14},
  pages 680--696. Springer, 2016.

\bibitem{lai2014splitting}
Rongjie Lai and Stanley Osher.
\newblock A splitting method for orthogonality constrained problems.
\newblock {\em Journal of Scientific Computing}, 58:431--449, 2014.

\bibitem{larson2019derivative}
Jeffrey Larson, Matt Menickelly, and Stefan~M Wild.
\newblock Derivative-free optimization methods.
\newblock {\em Acta Numerica}, 28:287--404, 2019.

\bibitem{lee2018introduction}
John~M Lee.
\newblock {\em Introduction to Riemannian manifolds}, volume~2.
\newblock Springer, 2018.

\bibitem{li2023stochastic}
Jiaxiang Li, Krishnakumar Balasubramanian, and Shiqian Ma.
\newblock Stochastic zeroth-order riemannian derivative estimation and
  optimization.
\newblock {\em Mathematics of Operations Research}, 48(2):1183--1211, 2023.

\bibitem{li2023zeroth}
Jiaxiang Li, Krishnakumar Balasubramanian, and Shiqian Ma.
\newblock Zeroth-order riemannian averaging stochastic approximation
  algorithms.
\newblock {\em SIAM Journal on Optimization}, 34(4):3314--3341, 2024.

\bibitem{li2022riemannian}
Jiaxiang Li, Shiqian Ma, and Tejes Srivastava.
\newblock A riemannian admm.
\newblock {\em arXiv preprint arXiv:2211.02163}, 2022.

\bibitem{lin2022gradient}
Tianyi Lin, Zeyu Zheng, and Michael Jordan.
\newblock Gradient-free methods for deterministic and stochastic nonsmooth
  nonconvex optimization.
\newblock {\em Advances in Neural Information Processing Systems},
  35:26160--26175, 2022.

\bibitem{maass2022tracking}
Alejandro~I Maass, Chris Manzie, Dragan Nesic, Jonathan~H Manton, and Iman
  Shames.
\newblock Tracking and regret bounds for online zeroth-order euclidean and
  riemannian optimization.
\newblock {\em SIAM Journal on Optimization}, 32(2):445--469, 2022.

\bibitem{mangoubi2018rapid}
Oren Mangoubi and Aaron Smith.
\newblock Rapid mixing of geodesic walks on manifolds with positive curvature.
\newblock {\em The Annals of Applied Probability}, 28(4):2501--2543, 2018.

\bibitem{martinez2022global}
David Mart{\'\i}nez-Rubio.
\newblock Global riemannian acceleration in hyperbolic and spherical spaces.
\newblock In {\em International Conference on Algorithmic Learning Theory},
  pages 768--826. PMLR, 2022.

\bibitem{martinez2023accelerated}
David Mart{\'\i}nez-Rubio and Sebastian Pokutta.
\newblock Accelerated riemannian optimization: Handling constraints with a prox
  to bound geometric penalties.
\newblock In {\em The Thirty Sixth Annual Conference on Learning Theory}, pages
  359--393. PMLR, 2023.

\bibitem{peng2023riemannian}
Zheng Peng, Weihe Wu, Jiang Hu, and Kangkang Deng.
\newblock Riemannian smoothing gradient type algorithms for nonsmooth
  optimization problem on compact riemannian submanifold embedded in euclidean
  space.
\newblock {\em Applied Mathematics \& Optimization}, 88(3):85, 2023.

\bibitem{sahinoglu2024online}
Emre Sahinoglu and Shahin Shahrampour.
\newblock An online optimization perspective on first-order and zero-order
  decentralized nonsmooth nonconvex stochastic optimization.
\newblock In {\em 41st International Conference on Machine Learning}, volume
  235, pages 43043--43059. PMLR, 2024.

\bibitem{sahinoglu2025decentralized}
Emre Sahinoglu and Shahin Shahrampour.
\newblock Decentralized online riemannian optimization beyond hadamard
  manifolds.
\newblock {\em arXiv preprint arXiv:2509.07779}, 2025.

\bibitem{sahinoglu2025online}
Emre Sahinoglu and Shahin Shahrampour.
\newblock Online optimization on hadamard manifolds: Curvature independent
  regret bounds on horospherically convex objectives.
\newblock {\em arXiv preprint arXiv:2509.11236}, 2025.

\bibitem{sato2019riemannian}
Hiroyuki Sato, Hiroyuki Kasai, and Bamdev Mishra.
\newblock Riemannian stochastic variance reduced gradient algorithm with
  retraction and vector transport.
\newblock {\em SIAM Journal on Optimization}, 29(2):1444--1472, 2019.

\bibitem{sra2016geometric}
Suvrit Sra and Reshad Hosseini.
\newblock Geometric optimization in machine learning.
\newblock {\em Algorithmic Advances in Riemannian Geometry and Applications:
  For Machine Learning, Computer Vision, Statistics, and Optimization}, pages
  73--91, 2016.

\bibitem{sun2016complete}
Ju~Sun, Qing Qu, and John Wright.
\newblock Complete dictionary recovery over the sphere ii: Recovery by
  riemannian trust-region method.
\newblock {\em IEEE Transactions on Information Theory}, 63(2):885--914, 2016.

\bibitem{sun2024retraction}
Youbang Sun, Shixiang Chen, Alfredo Garcia, and Shahin Shahrampour.
\newblock Retraction-free decentralized non-convex optimization with orthogonal
  constraints.
\newblock {\em arXiv preprint arXiv:2405.11590}, 2024.

\bibitem{sun2019escaping}
Yue Sun, Nicolas Flammarion, and Maryam Fazel.
\newblock Escaping from saddle points on riemannian manifolds.
\newblock {\em Advances in Neural Information Processing Systems}, 32, 2019.

\bibitem{wang2022riemannian}
Bokun Wang, Shiqian Ma, and Lingzhou Xue.
\newblock Riemannian stochastic proximal gradient methods for nonsmooth
  optimization over the stiefel manifold.
\newblock {\em Journal of machine learning research}, 23(106):1--33, 2022.

\bibitem{wang2025federated}
Hongye Wang, Zhaoye Pan, Chang He, Jiaxiang Li, and Bo~Jiang.
\newblock Federated learning on riemannian manifolds: A gradient-free
  projection-based approach.
\newblock {\em arXiv preprint arXiv:2507.22855}, 2025.

\bibitem{wang2020orthogonal}
Jiayun Wang, Yubei Chen, Rudrasis Chakraborty, and Stella~X Yu.
\newblock Orthogonal convolutional neural networks.
\newblock In {\em Proceedings of the IEEE/CVF conference on computer vision and
  pattern recognition}, pages 11505--11515, 2020.

\bibitem{wang2023online}
Xi~Wang, Zhipeng Tu, Yiguang Hong, Yingyi Wu, and Guodong Shi.
\newblock Online optimization over riemannian manifolds.
\newblock {\em Journal of Machine Learning Research}, 24(84):1--67, 2023.

\bibitem{weber2022projection}
Melanie Weber and Suvrit Sra.
\newblock Projection-free nonconvex stochastic optimization on riemannian
  manifolds.
\newblock {\em IMA Journal of Numerical Analysis}, 42(4):3241--3271, 2022.

\bibitem{weber2023riemannian}
Melanie Weber and Suvrit Sra.
\newblock Riemannian optimization via frank-wolfe methods.
\newblock {\em Mathematical Programming}, 199(1):525--556, 2023.

\bibitem{zhang2016first}
Hongyi Zhang and Suvrit Sra.
\newblock First-order methods for geodesically convex optimization.
\newblock In {\em Conference on learning theory}, pages 1617--1638. PMLR, 2016.

\bibitem{zhang2020complexity}
Jingzhao Zhang, Hongzhou Lin, Stefanie Jegelka, Suvrit Sra, and Ali Jadbabaie.
\newblock Complexity of finding stationary points of nonconvex nonsmooth
  functions.
\newblock In {\em International Conference on Machine Learning}, pages
  11173--11182. PMLR, 2020.

\bibitem{zhang2020primal}
Junyu Zhang, Shiqian Ma, and Shuzhong Zhang.
\newblock Primal-dual optimization algorithms over riemannian manifolds: an
  iteration complexity analysis.
\newblock {\em Mathematical Programming}, 184(1):445--490, 2020.

\bibitem{zhang2023sion}
Peiyuan Zhang, Jingzhao Zhang, and Suvrit Sra.
\newblock Sion’s minimax theorem in geodesic metric spaces and a riemannian
  extragradient algorithm.
\newblock {\em SIAM Journal on Optimization}, 33(4):2885--2908, 2023.

\bibitem{zhang2023adalora}
Qingru Zhang, Minshuo Chen, Alexander Bukharin, Pengcheng He, Yu~Cheng, Weizhu
  Chen, and Tuo Zhao.
\newblock Adaptive budget allocation for parameter-efficient fine-tuning.
\newblock In {\em The Eleventh International Conference on Learning
  Representations}, 2023.

\end{thebibliography}

\newpage





\newpage
\appendix

\section{Proofs}
\label{App:proof}

\subsection{Proof of Lemma \ref{lem:secfund}}

\begin{proof}
Without loss of generality, we assume $\norm{v}=1$, otherwise conduct the proof for $v/\norm{v}$. Let $\gamma(t)$ consist of unit speed geodesic segments. Denote the parallel transported vector $v$ along $\gamma(t)$ as $w(t)$, i.e., $w(0)=v$. For the extrinsic geometry we denote $v=v^T(t)+v^{\perp}(t)$, where $v^T(t) \in \tansp{\gamma(t)}$ is the projection of $v$ onto $\tansp{\gamma(t)}$, and $v^{\perp}(t)$ is the normal component of $v$, which is orthogonal to $T_{\gamma(t)}M$. Note that $v^T(t)$ is independent of the path $\gamma(t)$ and only depends on the point $\gamma(t)$. Eventually, we want to bound $\norm{w(t)-v^T(t)}$.

Since $w(t)$ is a parallel transport of $v$, the tangent component of its derivative must be zero, i.e. $(w'(t))^T=0$. Now, consider any unit parallel vector field $z(t) \in \tansp{\gamma(t)} $ along $\gamma(t)$, we have $\langle v^{\perp}(t),z(t)\rangle =0$, then by taking the derivative with respect to $t$, we obtain $\langle (v^{\perp})'(t),z(t) \rangle = - \langle v^{\perp}(t),z'(t) \rangle = - \langle v^{\perp}(t),\sff(\gamma'(t),z(t)) \rangle $ where $\sff$ is the second fundamental form. We also have $(v^T)'(t)=-(v^{\perp})'(t)$ since $v^T(t)+v^{\perp}(t)=v$ is fixed. We get $\langle (v^{T})'(t),z(t) \rangle =  \langle v^{\perp}(t),\sff(\gamma'(t),z(t)) \rangle $. Now the right hand side has a uniform upper bound of $C$, and by the arbitrarily chosen $z(t) \in \tansp{\gamma(t)}$, we get $\|((v^T)'(t))^T \|\le C$.

We can now bound the derivative of $\|w(t)-v^{T}(t)\|$ as 
\begin{align*}(\|w(t)-v^{T}(t)\|^2)'&=(1-2\langle w(t),v^T(t) \rangle + \| v^T(t)  \|^2 )'\\
&= -2\langle v^T(t),w'(t)\rangle  -2 \langle w(t),(v^T(t))' \rangle  + 2\langle v^T(t), (v^T(t))'\rangle .
\end{align*}
The first term is $0$ since $w'(t) \in \tanspperp{\gamma(t)}$ and $v^T(t) \in \tansp{\gamma(t)}$ are orthogonal. Then, we have 
\begin{align*}(\|w(t)-v^{T}(t)\|^2)'= 2\langle v^T(t)-w(t),(v^T(t))'  \rangle \le 2C \|v^T(t)-w(t) \|.
\end{align*}
This means that $\|w(t)-v^T(t) \|' \le C$. Now, integrating the above inequality on the geodesic segments of  $\gamma$ where the initial value, $\|w(0)-v^T(0) \|=0$, we obtain $\norm{\pt{\gamma}_{0,t}(v)-\projtan{\gamma(t)}(v)} = \norm{w(t)-v^T(t)} \le C\norm{v} \len(\gamma)$, which completes the proof.     
\end{proof}

\subsection{Proof of Lemma \ref{lem:Goldstein}}

\begin{proof}
\label{pr:lemGold}
It is given that $\dist(w_t,y) \leq \delta$, so $\ptg{w_t}{y} (\nabla_t) = \ptg{w_t}{y} (\rgrad f(w_t)) \in \partial_{\delta} f(y)$. Also, the average $\frac{1}{T} \sum_{t=0}^{T-1} \ptg{w_t}{y} (\nabla_t) \in \partial_{\delta} f(y) $. Then, we have by linearity of parallel transport operation that
\begin{align*}
\resizebox{\textwidth}{!}{$
\begin{aligned}
&\gradnormdelta{f}{y} \le \norm{\frac{1}{T} \sum_{t=0}^{T-1} \ptg{w_t}{y} (\nabla_t)} \\
&\leq \norm{ \ptg{x_0}{y}\Big(\frac{1}{T}\sum_{t=0}^{T-1}  (\ptseq{S^{t+1}})^{-1} \circ \ptg{w_t}{x_{t+1}} (\nabla_t)\Big)}+ \norm{\frac{1}{T} \sum_{t=0}^{T-1}\ptg{w_t}{y} (\nabla_t) -\ptg{x_0}{y} \circ (\ptseq{S^{t+1}})^{-1} \circ \ptg{w_t}{x_{t+1}} (\nabla_t) } \\
&\leq\norm{\frac{1}{T} \sum_{t=0}^{T-1}  (\ptseq{S^{t+1}})^{-1} \circ \ptg{w_t}{x_{t+1}} (\nabla_t)} +\frac{1}{T} \sum_{t=0}^{T-1} \norm{ \nabla_t -\ptg{y}{w_t} \circ \ptg{x_0}{y} \circ (\ptseq{S^{t+1}})^{-1} \circ \ptg{w_t}{x_{t+1}} (\nabla_t) }.
\end{aligned}
$}
\end{align*}
In the last term, we have a difference of a vector and its parallel transport along the sequence $S_t' := \{w_t,x_{t+1},x_{t},..., x_0,y,w_t\}$. In this sequence, based on the assumptions of the lemma, we know that $\dist(x_s,x_{s+1})\le D$ for $s\in\{0,...,t\}$, $\dist(w_t,x_{t+1})\le D$ and $\dist(y,w_t)\le \delta$. To bound $\dist(x_0,y)$, we can use triangle inequality to derive $$\dist(x_0,y)\le \dist(x_0,x_1)+\dist(x_1,w_0)+\dist(w_0,y)\le \delta+2D.$$ 
Then, we have $\len(S_t') \le D(t+4)+2\delta$. Now, by using Lemma \ref{lem:secfund}  we have
\begin{equation}
    \norm{\nabla_t -\ptg{y}{w_t} \circ \ptg{x_0}{y} \circ (\ptseq{S^{t+1}})^{-1} \circ \ptg{w_t}{x_{t+1}} (\nabla_t) } \le CL(D(t+4)) + 2C\delta L.
\end{equation}
Since $D = \frac{\delta}{T}$, taking average over $t$, we have 
\begin{equation}
\frac{1}{T} \sum_{t=0}^{T-1} \norm{ \nabla_t -\ptg{y}{w_t} \circ \ptg{x_0}{y} \circ (\ptseq{S^{t+1}})^{-1} \circ \ptg{w_t}{x_{t+1}} (\nabla_t) } \le 3L \delta C,
\end{equation} 
if $T>7$. As a result, we obtain 
\begin{equation}
    \gradnormdelta{f}{y} \le \norm{\frac{1}{T} \sum_{t=0}^{T-1}  (\ptseq{S^{t+1}})^{-1} \circ \ptg{w_t}{x_{t+1}} (\nabla_t)} + 3LC\delta.
\end{equation}
\end{proof}

\subsection{Proof of Theorem \ref{thm:O2NCptg}} 
\begin{proof}
We first find a bound on the term $\norm {\sum_{t=0}^{T-1} (\ptseq{S_{t+1}})^{-1} \circ\ptg{w_t}{x_{t+1}} (\nabla_t) } $ for an epoch consisting of $T$ iterations. Though we take the average for $K$ epochs, we drop subscript $k$ for convenience. 

Now, consider a length $T$ period where the online algorithm works without restart. Using the retraction   operator $\retr{x_t}{\cdot}$, we have the update $x_{t+1} = \retr{x_t}{\Delta_t} $. Let us also define $\gamma(s) = \retr{x_t}{s\Delta_t}$ for $s\in[0,1]$ and $g(s)=f(\gamma (s))$. We then have 
\begin{equation*}
\resizebox{\textwidth}{!}{$
g(1)-g(0) = f(x_{t+1}) - f(x_t) = \int_{0}^{1} g'(s) ds = \int_{0}^{1} \langle \rgrad  f (\gamma(s)) , \gamma '(s) \rangle ds=\E_s [\langle \rgrad  f (\gamma(s)) , \gamma '(s) \rangle ],$}
\end{equation*}
for $s \sim \un [0,1]$.  

Let $g_t = \rgrad{F(w_t,\nu_t)}$ and $\nabla_t = \E[g_t] \in \tansp{w_t}$. Since parallel transport preserves the inner product, we have the following relationship: 
\begin{align*}
 \E [f(x_{t+1}) - f(x_t)]&= \E [ \langle \ptg{w_t}{x_{t+1}}  (g_t) , \ptg{w_t}{x_{t+1}} (\gamma '(s_t)) \rangle ] \\
&= \E[\langle \ptg{w_t}{x_{t+1}}(g_t) , \ptg{x_t}{x_{t+1}}(\Delta_{t}) - u_{t+1}  \rangle]   + \E[\langle \ptg{w_t}{x_{t+1}} (g_t) , u_{t+1}  \rangle ] \\ &+ \E[\langle  \ptg{w_t}{x_{t+1}} (g_t) , \ptg{w_t}{x_{t+1}} (\gamma '(s_t)) - \ptg{x_t}{x_{t+1}}(\Delta_{t})   \rangle],  
\end{align*}
where the expectation is over $s$ and $\nu$.

We next introduce a series of vectors $u_t\in \tansp{x_t}$ to facilitate the analysis. We can then decompose $\E [f(x_{t+1}) - f(x_t)]$ into three parts and analyze each part separately. 

Let us choose $u_0 =  -D \frac{\sum_{\tau=0}^{T-1}(\ptseq{S_{\tau+1}})^{-1} \circ \ptg{w_\tau}{x_{\tau+1}}(\nabla_\tau)}{\norm{\sum_{\tau=0}^{T-1}(\ptseq{S_{\tau+1}})^{-1}\circ \ptg{w_\tau}{x_{\tau+1}}(\nabla_\tau) } } $ and  $u_t = \ptseq{S_{t}} (u_0) $. Summing above over $t=0,\ldots,T-1$, we derive
\begin{align*}
\E [f(x_{T}) - f(x_0)] &= \underbrace{\sum_{t=0}^{T-1} \E[\langle \ptg{w_t}{x_{t+1}} (g_t) , \ptg{x_t}{x_{t+1}}(\Delta_{t}) - u_{t+1}  \rangle]}_{\texttt{Term A}} \\
& + \underbrace{ \sum_{t=0}^{T-1}  \E[\langle \ptg{w_t}{x_{t+1}} (g_t) , u_{t+1}  \rangle ] }_{\texttt{Term B}}\\
&+ \underbrace{\sum_{t=0}^{T-1}  \E[\langle  \ptg{w_t}{x_{t+1}} (g_t) , \ptg{w_t}{x_{t+1}} (\gamma '(s_t)) - \ptg{x_t}{x_{t+1}}(\Delta_{t})   \rangle]}_{\texttt{Term C}}.  
\end{align*}
We next provide analytical bounds for each of the three terms respectively.

\textbf{Bound on {\texttt{Term A}}.}

Recall that the ball of radius $D$ in $\tansp{x_t}$ is denoted by $\balltansp{x_t}{D}$. We denote the projection operator to $\balltansp{x_t}{D}$ by $\cliptan{x_t}$. Then, the update rule with parallel transport can be written as $\Delta_{t+1} =\cliptan{x_{t+1}} (\ptg{x_t}{x_{t+1}}(\Delta_t) - \eta\ptg{w_t}{x_{t+1}}(g_t))$.  We have $u_{t+1} =\ptg{x_t}{x_{t+1}} (u_t)$ and due to the projection properties of the convex sets, we get 
\begin{equation*}
\|\Delta_{t+1} - u_{t+1} \|^2 \le \|\ptg{x_t}{x_{t+1}}(\Delta_t) - u_{t+1} -  \eta\ptg{w_t}{x_{t+1}}(g_t) \|^2 = \|\ptg{x_t}{x_{t+1}}(\Delta_t-u_t) - \eta\ptg{w_t}{x_{t+1}}(g_t) \|^2. \end{equation*}
Rearranging the terms 
for any $\eta>0$, we obtain 
\begin{equation*} 
\resizebox{\textwidth}{!}{$
\langle \ptg{w_t}{x_{t+1}}(g_t) , \ptg{x_t}{x_{t+1}} \big( \Delta_{t} - u_t \big )  \rangle \leq \frac{1}{2\eta} \big(\norm{\ptg{x_t}{x_{t+1}}(\Delta_t-u_t) }^2 - \norm{\Delta_{t+1} - u_{t+1}}^2 \big) + \frac{\eta}{2} \norm{\ptg{w_t}{x_{t+1}}(g_t)}^2.$}   
\end{equation*}
Since parallel transport is isometric, we have $\norm{\ptg{x_t}{x_{t+1}}(\Delta_t-u_t) } = \norm{\Delta_t - u_t}$. Summing above over $t=0,\ldots,T-1$ gives
\begin{align*} 
\resizebox{\textwidth}{!}{$
\begin{aligned}
\sum_{t=0}^{T-1} \langle \ptg{w_t}{x_{t+1}}(g_t) , \ptg{x_t}{x_{t+1}}(\Delta_{t} - u_t )  \rangle &\leq \sum_{t=0}^{T-1}  \frac{1}{2\eta} (\|\Delta_t - u_t\|^2 - \|\Delta_{t+1} - u_{t+1}\|^2 ) + \frac{\eta}{2} \sum_{t=0}^{T-1}\|\ptg{w_t}{x_{t+1}}(g_t)\|^2 \\
&\le \frac{1}{2\eta} (\|\Delta_0-u_0\|^2 - \|\Delta_T-u_T\|^2 ) + \frac{\eta}{2} \sum_{t=0}^{T-1}\|\ptg{w_t}{x_{t+1}}(g_t)\|^2.    
\end{aligned}
$}
\end{align*}
Noting that $\eta = \frac{D}{G\sqrt{T}}$, we take expectation from above and use the bound $G^2$ on the stochastic gradient to get
\begin{equation}
 \texttt{Term A}\le \frac{1}{2\eta} D^2 + \frac{\eta}{2}  G^2T = DG\sqrt{T}.
\end{equation}
\textbf{Bound on {\texttt{Term B}}.}

For the ease of notation, we write the stochastic component of the gradient as $\varepsilon_t:=g_t - \nabla_t $. Due to the isometry of parallel transport operation, we have
\begin{align*}
\resizebox{\textwidth}{!}{$
\begin{aligned}
\sum_{t=0}^{T-1}  \langle \ptg{w_t}{x_{t+1}}(g_t) , u_{t+1}  \rangle  &= \sum_{t=0}^{T-1} \langle (\ptseq{S_{t+1}})^{-1} \circ \ptg{w_t}{x_{t+1}}(g_t) , (\ptseq{S_{t+1}})^{-1} (u_{t+1})  \rangle \\
&= \langle u_0, \sum_{t=0}^{T-1} (\ptseq{S_{t+1}})^{-1} \circ\ptg{w_t}{x_{t+1}}(g_t) \rangle \\
&= -DT \norm{\frac{1}{T} \sum_{t=0}^{T-1} (\ptseq{S_{t+1}})^{-1} \circ\ptg{w_t}{x_{t+1}}(\nabla_t)} +   \langle u_0, \sum_{t=0}^{T-1} (\ptseq{S_{t+1}})^{-1}\circ \ptg{w_t}{x_{t+1}} (\varepsilon_t) \rangle.
\end{aligned}
$}
\end{align*}
For the second term in the last line, since parallel transport operation is linear and the noise has mean zero, we have $\E [ \ptg{x}{y} (\varepsilon) ] = \ptg{x}{y} (\E[\varepsilon])=0 $. Then, we can use the independence of noise and isometry of parallel transport, such that
\begin{align*}
\E\Big[ \norm{ \sum_{t=0}^{T-1} (\ptseq{S_{t+1}})^{-1} \circ\ptg{w_t}{x_{t+1}} (\varepsilon_t) }^2\Big]&= \E \Big[\sum_{t=0}^{T-1} \norm{(\ptseq{S_{t+1}})^{-1} \circ\ptg{w_t}{x_{t+1}} (\varepsilon_t) }^2\Big] \\
&= \sum_{t=0}^{T-1}  \E \Big[ \norm{\varepsilon_t }^2\Big] = T\sigma^2.
\end{align*}
Therefore, applying Cauchy inequality and using $\norm{ u_0}=D$, we get
\begin{align*}
    \E \Big[ \langle u_0, \sum_{t=0}^{T-1} (\ptseq{S_{t+1}})^{-1}\circ \ptg{w_t}{x_{t+1}} (\varepsilon_t) \rangle \Big] &\le D~\E \Big[\norm{ \sum_{t=0}^{T-1} (\ptseq{S_{t+1}})^{-1} \circ\ptg{w_t}{x_{t+1}}(\varepsilon_t)}\Big] \\ 
    &\le D \sqrt{\E \Big[\norm {\sum_{t=0}^{T-1} (\ptseq{S_{t+1}})^{-1} \circ \ptg{w_t}{x_{t+1}} (\varepsilon_t)}^2\Big]} \\
    &\le D\sigma \sqrt{T}.
\end{align*}
Combining the previous equations, we provide the following bound for the second term as
\begin{align*} 
\texttt{Term B}&\le -DT ~ \E \Big[\norm{ \frac{1}{T} \sum_{t=0}^{T-1} (\ptseq{S_{t+1}})^{-1} \circ \ptg{w_t}{x_{t+1}} (\nabla_t)} \Big]  + D\sigma \sqrt{T}  .
\end{align*}

\textbf{Bound on {\texttt{Term C}}.}

For the third term, given the bound on the stochastic gradient, we have
\begin{equation}\E \big[\langle  \ptg{w_t}{x_{t+1}}(g_t) , \ptg{w_t}{x_{t+1}} (\gamma '(s_t)) - \ptg{x_t}{x_{t+1}}(\Delta_{t})   \rangle \big] \le G~\E \big[\ \norm{ \ptg{w_t}{x_{t+1}} (\gamma '(s_t)) - \ptg{x_t}{x_{t+1}}(\Delta_{t})}\big]. \nonumber
\end{equation}
We write $\gamma'(s_t) = \ptg{x_t}{w_{t}}(\Delta_t) +v $, where $v \in \tansp{w_t}$. Then, applying Lemma \ref{lem:secfund}, since $\|\Delta_t\|\le D$ we get 
\begin{align*}
\norm{ \ptg{w_t}{x_{t+1}}(\gamma '(s_t)) - \ptg{x_t}{x_{t+1}}(\Delta_{t})} &= \norm{ \ptg{w_t}{x_{t+1}} \circ\ptg{x_t}{w_{t}}(\Delta_t) - \ptg{x_t}{x_{t+1}}(\Delta_{t}) + \ptg{w_t}{x_{t+1}} (v) }  \\
&\le \norm{ \Delta_t - \ptg{x_{t+1}}{x_t}\circ\ptg{w_t}{x_{t+1}} \circ\ptg{x_t}{w_{t}}(\Delta_t)} + \norm{v} \\
&\le CD \times\len(x_t,w_t,x_{t+1},x_t) + \norm{v}
\end{align*}

The length of the path $\{x_t,w_t,x_{t+1},x_t\}$ is less than $3D$. So the first term is bounded by $3CD^2$. 

We now proceed to bounding $\norm{v}$. We know that $\gamma(s) = \retr{x_t}{s\Delta_t}$, $\gamma'(0)=\Delta_t$, and $\|\Delta_t\|\le D$. Since we have $\norm{\gamma''(s)} \le C'D^2$ for $s\in [0,1]$  with $C'$ defined in Assumption \ref{assum:retr_curves}, we obtain  
\begin{align}\label{eq:CD}
    \norm{\Delta_t -\gamma'(s_t)} \le \int_0^{s_t} \norm{\gamma''(\tau)} d \tau \le C'D^2.
\end{align}
On the other hand, consider the unit-speed geodesic $\gamma_2(s)$ connecting $x_t$ and $w_t$ and let $z(s)$ be the parallel transported vector field with $z(0)=\Delta_t$ along $\gamma_2(s)$. Since $z(s)$ is parallel transported it has $0$ intrinsic acceleration. The difference $\Delta_t - \ptg{x_t}{w_t}(\Delta_t)$ comes from the extrinsic acceleration. We have 
\begin{equation}\label{eq:8}
\norm{\Delta_t - \ptg{x_t}{w_t}(\Delta_t) } = \norm{\int_0^{\dist(x_t,w_t)} \sff(z(s),\gamma_2'(s))ds }\le C\dist(x_t,w_t)\norm{z(0)} \le CD^2.    
\end{equation}

Combining the previous two inequalities, we get
\begin{equation}\norm{v}=\norm{\gamma'(s_t) -\ptg{x_t}{w_t} (\Delta_t)} \le \norm{\gamma'(s_t)-\Delta_t} + \norm{\Delta_t-\ptg{x_t}{w_t}(\Delta_t)} \le (C+C') D^2.\nonumber
\end{equation}
Therefore, for the third term we have the following bound 
\begin{equation}\texttt{Term C} \le (GC'+ 4GC) D^2T.
\end{equation}
When we sum the bounds for three terms we obtain 
\begin{align}
\label{eq:thm1_bound1}
\E \Big[\norm{\frac{1}{T} \sum_{t=0}^{T-1} (\ptseq{S_{t+1}})^{-1} \circ\ptg{w_t}{x_{t+1}} (\nabla_t) } \Big] &\le \frac{1}{DT} \E[f(x_0)-f(x_T)]\nonumber \\
&+\frac{1}{DT}\Big( D\sigma\sqrt{T} + DG\sqrt{T} +(GC'+ 4GC)D^2T \Big).
\end{align}
This holds for any $k\in [K]$ and we take the average over $K$ epochs, where summation of $f(x_0)-f(x_T)$ terms telescope due to initialization at each epoch. Then
\begin{align*}
\resizebox{\textwidth}{!}{$
\begin{aligned}
\E_k [\norm{\frac{1}{T} \sum_{t=0}^{T-1} (\ptseq{S_{k,t+1}})^{-1} \circ\ptg{w_{k,t}}{x_{k,t+1}} (\nabla_{k,t}) }] &= \frac{1}{K}\sum_{k=1}^{K}  \E\Big[\norm{\frac{1}{T} \sum_{t=0}^{T-1} (\ptseq{S_{k,t+1}})^{-1}\circ \ptg{w_{k,t}}{x_{k,t+1}} (\nabla_{k,t}) } \Big] \\
&\le \frac{f(x_{1,0})-\E[f(x_{K,T})]}{DTK} + \frac{\sigma+G}{\sqrt{T}} + (GC'+ 4GC)D. 
\end{aligned}
$}
\end{align*}
By using Lemma \ref{lem:Goldstein}, defining $\theta:=f(x_{1,0})-\E[f(x_{K,T})]$, and recalling that $\delta=DT$ and $N=KT$, we obtain 
\begin{align*}
    \E [\gradnormdelta{f}{w_{out}}] &\le  \E_k[ \norm {\frac{1}{T} \sum_{t=0}^{T-1} (\ptseq{S_{k,t+1}})^{-1} \circ\ptg{w_{k,t}}{x_{k,t+1}} (\nabla_{k,t})} ] + 3CL\delta\\ 
    &\le \frac{\theta}{DTK} + \frac{\sigma+G}{\sqrt{T}} + (GC'+ 4GC)D + 3CL\delta \\
    &\le \frac{\theta T}{\delta N} + \frac{\sigma+G}{\sqrt{T}} + (GC'+ 4GC)D + 3CL\delta.
\end{align*}
We now optimize the above bound over $T$ by choosing $T=(\delta N)^{\frac{2}{3}}$. Since $\delta \le 1 $ we also have $D \le (\delta N)^{-\frac{1}{3}} $. As a result, we get the final bound
\begin{equation}
\label{eq:thm1}
    \E [\gradnormdelta{f}{w_{out}}] \le (\theta + \sigma +G + GC'+ 4GC) (\delta N)^{-\frac{1}{3}} + 3CL \delta = C_1 (\delta N)^{-\frac{1}{3}} + 3C_2L\delta.
\end{equation}
where $C_1 := \theta + \sigma +G + GC'+ 4GC $ and $C_2 := C$.
\end{proof}

\subsection{Proof of Theorem \ref{thm:O2NCvt}}

\begin{proof}

In this part, we consider the case where inner products belong to the ambient space, so we can write the inner products without transporting the vectors to the same tangent space.
For $T$ iterations in an epoch, we omit the subscripts $k$. 
Similar to the proof of Theorem \ref{thm:O2NCptg}, we can decompose the difference between consecutive iterates as follows
\begin{align*}
\E [f(x_{t+1}) - f(x_t)] = \E [\langle g_t, \gamma'(s_t) \rangle] &= \E [\langle g_t, \Delta_t-u_t \rangle] + \E [\langle g_t, u_t \rangle] + \E [\langle g_t, \gamma'(s_t) -\Delta_t  \rangle].
\end{align*}
Summing above over $t \in \{0,\ldots,T-1\}$, we get
\begin{align*}
\label{eq:thm2_dec}
\E [f(x_{T}) - f(x_0)] &= \underbrace{\E \Big[\sum_{t=0}^{T-1} \langle g_t, \Delta_t-u_t \rangle\Big]}_{\texttt{Term D}} + \underbrace{\E \Big[\sum_{t=0}^{T-1} \langle g_t, u_t \rangle\Big]}_{\texttt{Term E}} + \underbrace{\E \Big[\sum_{t=0}^{T-1} \langle g_t, \gamma'(s_t) -\Delta_t  \rangle\Big]}_{\texttt{Term F}}.    
\end{align*}

Let us recall that $\mathcal{D}_{x} := \balltansp{x}{D}$ represents a ball with radius $D$ in $\tansp{x}$  and $\cliptan{x} : \tansp{x} \to \tansp{x}$ denotes the projection operator onto $\mathcal{D}_{x}$. Also, recall that for the projection case, the update rule of $\Delta_t$ is given as $\Delta_{t+1} =\cliptan{x_{t+1}} \projtan{x_{t+1}} (\Delta_t - \eta g_t)$. We next bound the three terms above respectively.

\textbf{Bound on $\texttt{Term D}$.}

First, define $u_t = \projtan{x_t}(u) $ where $u = -D \sum_{t=0}^{T-1} \nabla_t/\norm{\sum_{t=0}^{T-1} \nabla_t}$. We start with the update equation where $\Delta_{t+1} = \cliptan{x_{t+1}} \projtan{x_{t+1}} (\Delta_t - \eta g_t)$. Subtracting $u_t$ from $\Delta_{t+1}$, we obtain
\begin{equation}
\label{eq_th2_1}
\Delta_{t+1} - u_t = (\cliptan{x_{t+1}} \projtan{x_{t+1}} (\Delta_t - \eta g_t) - \projtan{x_{t+1}}(u_t)) +  (\projtan{x_{t+1}}(u_t) - u_t). 
\end{equation}
We know that $(v-\projtan{x} v) \perp \tansp{x}$ for any $x \in \man$, so two terms in the above equation are perpendicular to each other. By taking square on both sides, we get
\begin{equation}
\label{eq_th2_2}
\resizebox{\textwidth}{!}{$
\norm{\Delta_{t+1} - u_t}^2 = \norm{\cliptan{x_{t+1}} \projtan{x_{t+1}} (\Delta_t - \eta g_t) - \projtan{x_{t+1}}(u_t)}^2 +  \norm{\projtan{x_{t+1}}(u_t) - u_t}^2. $}
\end{equation}
Notice that $\norm{\projtan{x_{t+1}}(u_t)} \le \norm{u_t} \le \norm{u}=D $, so $\projtan{x_{t+1}}(u_t) \in \mathcal{D}_{x_{t+1}}$.

On the other hand, since $\mathcal{D}_{x_{t+1}}$ is convex, we have $\norm{\cliptan{x_{t+1}}(a) -b}^2 \le \norm{a-b}^2$ for any $a \in \tansp{x_{t+1}}$ and $b \in \mathcal{D}_{x_{t+1}}$. Applying this in Equation \ref{eq_th2_2} and using the contraction property of projection, we have 
\begin{equation}
\label{eq:thm2_eq3}  
\norm{\Delta_{t+1} - u_t}^2 \le \norm{\Delta_t -\eta g_t - u_t}^2 + \norm{\projtan{x_{t+1}}(u_t) - u_t}^2.
\end{equation}

Rearranging the terms, for any $\eta> 0$, we obtain
\begin{equation}
\label{eq:thm2_eq4}
\langle g_t, \Delta_t-u_t \rangle \le \frac{1}{2\eta} \big(\norm{\Delta_t-u_t}^2 - \norm{\Delta_{t+1}-u_t}^2 \big) + \frac{\eta}{2} \norm{g_t}^2 + \frac{1}{2\eta} \norm{u_t-\projtan{x_{t+1}}(u_t)}^2.  
\end{equation}

Summing above over $t$ gives
\begin{align*}
\sum_{t=0}^{T-1} \langle g_t, \Delta_t-u_t \rangle &\le \frac{1}{2\eta} \big(\norm{\Delta_0-u_0}^2 - \norm{\Delta_{T}-u_{T-1}}^2 \big) \\
&+\frac{1}{2\eta}\sum_{t=1}^{T-1} \big(\norm{\Delta_t -u_t}^2 - \norm{\Delta_t - u_{t-1}}^2 \big)\\
&+ \frac{\eta}{2} \sum_{t=0}^{T-1} \norm{g_t}^2+ \frac{1}{2\eta} \sum_{t=0}^{T-1} \norm{u_t-\projtan{x_{t+1}}(u_t)}^2.  
\end{align*}
Since $\norm{u_0}\le D$ and $\Delta_0=0$, we have $\norm{\Delta_0-u_0}^2 - \norm{\Delta_{T}-u_{T-1}}^2 \le D^2 $. Since $\norm{u_t}\le D$, we can also write 
\begin{equation}
    \norm{\Delta_t-u_t}^2 - \norm{\Delta_t-u_{t-1}}^2 = \langle u_{t-1}-u_t , 2\Delta_t - u_t - u_{t-1} \rangle \le 4D \norm{u_t-u_{t-1}}.
\end{equation}
According to Lemma \ref{lem:proj}, for $v \in \tansp{x}$ and $ \norm{v-\projtan{y}(v) } \le 2m C \norm{v} \dist(x,y)$, so we have 
\begin{equation}
\norm{u_t-\projtan{x_{t+1}}(u_t)} \le 2mC \norm{u_t}\dist(x_t,x_{t+1})\leq 2mCD^2,    
\end{equation}
given that $\dist(x_t,x_{t+1})\le D$ and $\norm{u_t}\le D$. By the same token, we have $\norm{u_t-u_{t-1}} \le 2mCD^2$.

Combining all the previous inequalities, we get
\begin{align}
\label{eq:thm2_eq6}
\E\Big[\sum_{t=0}^{T-1} \langle g_t, \Delta_t-u_t \rangle\Big] &\le \frac{D^2}{2\eta} + \frac{4mCD^3T}{\eta} + \frac{\eta}{2} G^2T + \frac{1}{2\eta} 4m^2 C^2D^4T,  
\end{align}
since $\E[\norm{g_t}^2] \le G$. Again, we choose $\eta = \frac{D}{G\sqrt{T}}$. Since $\delta \le 1$, $T>1$, we have $DT = \delta \le 1$ and $D^2T \le 1$. Therefore, we can simplify above to derive
\begin{align}
\label{eq:thm2_eq7}
\E\Big[\sum_{t=0}^T \langle g_t, \Delta_t-u_t \rangle \Big] &\le \frac{DG\sqrt{T}}{2} + 4mCD^2GT\sqrt{T} + \frac{DG\sqrt{T}}{2} + 2m^2C^2D^3GT\sqrt{T} \nonumber\\
&\le DG(1+4Cm)\sqrt{T} + 2m^2D^2TGC^2.  
\end{align}
\textbf{Bound on $\texttt{Term E}$.}

Recall that  $u = -D \sum_{t=0}^{T-1} \nabla_t/\norm{\sum_{t=0}^{T-1} \nabla_t}$ and $u_t = \projtan{x_t}(u)$, and define $u_t' := \projtan{w_t}(u)$. We know that $\norm{u_t-u_t'} \le 2mC\norm{u}\dist(x_t,w_t)  \le 2mCD^2$ using Lemma \ref{lem:proj}. Also $\nabla_t \in \tansp{w_t}$ and $u-u_t' \perp \nabla_t$ so $\langle \nabla_t ,u_t' \rangle = \langle \nabla_t, u \rangle $. Given that $\varepsilon_t=g_t - \nabla_t $ and $\langle \varepsilon_t ,u_t' \rangle = \langle \varepsilon_t, u \rangle $, the second term can be written as 
\begin{align*}
\E\Big[\sum_{t=0}^{T-1}\langle  g_t, u_t \rangle \Big] & = \E \Big[\sum_{t=0}^{T-1} \langle  \nabla_t, u_t' \rangle \Big] +   \E\Big[ \sum_{t=0}^{T-1} \langle  \varepsilon_t, u_t' \rangle \Big] + \E\Big[ \sum_{t=0}^{T-1}\langle g_t, u_t-u_t' \rangle \Big]  \\
&\le -DT ~ \E \Big[\norm{\frac{1}{T} \sum_{t=0}^{T-1} \nabla_t  } \Big] + D\sigma \sqrt{T} + 2mGCD^2T. 
\end{align*}

\textbf{Bound on $\texttt{Term F}$.}

Recalling \eqref{eq:CD} and using the bound on the stochastic gradient, the third term be bounded such that
\begin{align*}
\E \Big[\sum_{t=0}^{T-1} \langle g_t, \gamma'(s_t) -\Delta_t  \rangle \Big] \le C' GD^2T. 
\end{align*}
When we sum all the inequalities for the terms in the decomposition of $\E[f(x_T)-f(x_0)]$ we obtain the following inequality
\begin{equation*}
\E \Big[\norm{\frac{1}{T} \sum_{t=0}^{T-1} \nabla_t  } \Big] \le \frac{f(x_0) - \E[f(x_T)]}{DT} + \frac{1}{\sqrt{T}} (\sigma+G + 4mGC) + DG (2Cm + C' + 2m^2C^2 ).
\end{equation*}
After taking average over $K$ epochs, we can bound each term with the previous results. We note that the first term will telescope (due to initialization) while the other terms can be bounded independent of $k$. Therefore,
\begin{align*}
\E\left[\frac{1}{K}\sum_{k=1}^K \norm{\frac{1}{T} \sum_{t=0}^{T-1} \nabla_{k,t}  }\right] &\le  \frac{f(x_{1,0}) - \E[f(x_{K,T})]}{DTK} + \frac{1}{\sqrt{T}} (\sigma+G + 4mGC )\\ &+ DG (2mC + C' + 2m^2C^2 ) \\
& \le \frac{\theta T}{\delta N} + \frac{1}{\sqrt{T}} (\sigma+G + 4mGC  ) + DG (2mC+C' + 2m^2C^2 ),
\end{align*}
where the last line is by recalling that $\delta=DT$ and $N=KT$. Next, we choose $T=(\delta N)^{\frac{2}{3}}$ to get 
\begin{equation*}
\E\left[\frac{1}{K}\sum_{k=1}^K \norm{\frac{1}{T} \sum_{t=0}^{T-1} \nabla_{k,t}  }\right] \le  (\delta N)^{-\frac{1}{3}} (\theta + \sigma+G+4mGC) + \delta^{\frac{1}{3}} N^{-\frac{2}{3}} (2mGC+GC' + 2m^2GC^2).  
\end{equation*}
To link $\gradnormdelta{f}{\bar{w}}$ and $\norm{\frac{1}{T} \sum_{t=0}^{T-1} \nabla_t }$ we have the following inequality
\begin{align*}
\gradnormdelta{f}{\bar{w}} &\le \norm{\frac{1}{T} \sum_{t=0}^{T-1} \ptg{w_t}{\bar{w}} (\rgrad f(w_t))}  = \norm{\frac{1}{T} \sum_{t=0}^{T-1} \ptg{w_t}{\bar{w}} (\nabla_t) }  \\
&\le  \norm{\frac{1}{T} \sum_{t=0}^{T-1}  \nabla_t } + \frac{1}{T} \sum_{t=0}^{T-1} \norm{\nabla_t - \ptg{w_t}{\bar{w}} (\nabla_t) } \\
&\le \norm{\frac{1}{T} \sum_{t=0}^{T-1}  \nabla_t } +CL\delta,
\end{align*}
where the last line is derived similar to \eqref{eq:8}. As a result, we have 
\begin{equation*}
\E[\gradnormdelta{f}{w_{out}}] \le C_3(\delta N)^{-\frac{1}{3}} + C_4 \delta^{\frac{1}{3}}N^{-\frac{2}{3}}+ \delta LC.
\end{equation*}
where $C_3 := \theta + \sigma+G + 4mGC $ and $C_4 := 2mGC+GC' + 2m^2GC^2$.

\end{proof}

\begin{lemma}
\label{lem:proj}
Let $v$ be a vector in the ambient space and $x,y \in \man$. Assume that the second fundamental form is bounded by $C$ for unit vectors. Then, $$\|\projtan{x}(v) - \projtan{y}(v)\| \le 2mC \|v\| \dist(x,y),$$ 
 where $m=n-d$ is the dimension of the normal space. 
\end{lemma}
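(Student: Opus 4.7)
The plan is to reduce to an operator-norm estimate on the projector and use the Weingarten / second fundamental form to control its derivative along a minimizing geodesic.

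First I would rewrite things in terms of the normal projector: let $Q_z : \mathbb{R}^n \to \mathbb{R}^n$ be the orthogonal projection onto the normal space $\norsp{z}$, so that $\projtan{z} = I - Q_z$. Then $\projtan{x}(v) - \projtan{y}(v) = (Q_y - Q_x)v$ and hence
\begin{equation*}
\|\projtan{x}(v) - \projtan{y}(v)\| \le \|Q_y - Q_x\|_{\mathrm{op}} \, \|v\|,
\end{equation*}
so it suffices to show $\|Q_y - Q_x\|_{\mathrm{op}} \le 2mC\,\dist(x,y)$. Taking $\gamma : [0,d] \to \man$ a unit-speed minimizing geodesic from $x$ to $y$ with $d = \dist(x,y)$, I would write $Q_y - Q_x = \int_0^d \frac{d}{dt} Q_{\gamma(t)}\,dt$, so it is enough to prove the pointwise bound $\|\tfrac{d}{dt} Q_{\gamma(t)}\|_{\mathrm{op}} \le 2mC$.

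Next I would pick an orthonormal frame $\{n_1(t), \ldots, n_m(t)\}$ of the normal bundle along $\gamma$ that is parallel with respect to the normal connection $\nabla^\perp$ (such a frame exists by standard ODE theory applied to $\nabla^\perp$). Then $Q_{\gamma(t)} = \sum_{i=1}^m n_i(t)\, n_i(t)^\top$ in the ambient $\mathbb{R}^n$, and differentiating gives
\begin{equation*}
\tfrac{d}{dt} Q_{\gamma(t)} = \sum_{i=1}^m \bigl( n_i'(t)\, n_i(t)^\top + n_i(t)\, n_i'(t)^\top \bigr).
\end{equation*}
Each summand is rank at most two with operator norm at most $2\|n_i'(t)\|\,\|n_i(t)\| = 2\|n_i'(t)\|$, so $\|\tfrac{d}{dt}Q_{\gamma(t)}\|_{\mathrm{op}} \le 2\sum_{i=1}^m \|n_i'(t)\|$.

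It remains to bound $\|n_i'(t)\|$. Since $n_i$ is parallel in the normal bundle, the normal component $(n_i'(t))^\perp = 0$, and the ambient derivative equals the tangential component $(n_i'(t))^T$. By the Weingarten equation this tangential component is $-W_{n_i}(\gamma'(t))$, where the shape operator satisfies $\langle W_{n_i}(\xi), \eta\rangle = \langle \sff(\xi,\eta), n_i\rangle$. Taking $\xi = \gamma'(t)$ and maximizing over unit $\eta \in \tansp{\gamma(t)}$, Assumption \ref{assum:boundedsecform} combined with $\|\gamma'(t)\| = \|n_i(t)\| = 1$ gives $\|W_{n_i}(\gamma'(t))\| \le C$, so $\|n_i'(t)\| \le C$. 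Summing over $i$ and integrating over $[0,d]$ yields $\|Q_y - Q_x\|_{\mathrm{op}} \le 2mC \cdot d$, which combined with the opening reduction delivers the claim.

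The main obstacle I anticipate is the bookkeeping around the parallel normal frame: one must verify that the formula $Q_{\gamma(t)} = \sum_i n_i n_i^\top$ can be differentiated as a matrix-valued function (smoothness of $n_i$ along $\gamma$) and that the shape-operator identity indeed produces the clean factor $C$ without picking up a $\sqrt{d}$ from a sum of squares. An alternative that avoids the parallel-frame construction is to note that $\frac{dQ}{dt}$ is frame-independent and can be computed locally in any smooth normal frame; the tangential motion of each $n_i$ is always governed by the Weingarten map and thus bounded by $C$, while the purely-normal motion cancels in the symmetric expression $n_i n_i'^\top + n_i' n_i^\top$ when one takes antisymmetric combinations across indices---so the parallel frame is only a convenience.
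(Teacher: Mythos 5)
Your proof is correct and takes essentially the same route as the paper's: both express the projector via an orthonormal normal frame $\{n_i(t)\}$ along the minimizing geodesic, differentiate to get $\pm\sum_i(n_i' n_i^\top + n_i n_i'^\top)$, bound $\|n_i'\|\le C$ through the Weingarten/shape operator, and integrate over the geodesic length. Your explicit choice of a $\nabla^\perp$-parallel normal frame is a small but genuine clarification of a step the paper leaves implicit, since its equality $\|n_i'(t)\|=\|S_{n_i(t)}(\gamma'(t))\|$ silently requires $(n_i')^\perp=0$.
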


\begin{proof}
Let $\gamma(t)$ be the unit-speed minimizing geodesic connecting $x$ to $y$. Let $\phi(t)=\projtan{\gamma(t)}$ be the orthogonal projection operator to $\tansp{\gamma(t)}$. Choose an orthonormal frame $\{n_i(t) \}_{i=1}^{m}$ from the normal space $\norsp{\gamma(t)}$, where $m=n-d$ is the dimension of the normal space. Then, we can represent $\phi(t)=I-\sum_{i=1}^m n_i(t) n_i(t)^{\top}$. 

Differentiating $\phi(t)$ gives $\phi'(t) =-\sum_{i=1}^m (n_i'(t) n_i(t)^{\top} + n_i(t) n_i'(t)^{\top})$. For any $v$ in the ambient space, we have
\begin{equation*}
    \norm{\phi'(t)v}\le \sum_{i=1}^m |\innerprod{n_i(t)}{v}|\norm{n_i'(t)} + |\innerprod{n_i'(t)}{v}|\norm{n_i(t)} \le 2\norm{v} \sum_{i=1}^m \norm{n_i'(t)}.
\end{equation*}
Hence, we have $\norm{\phi'(t)}_{op}\le 2\sum_{i=1}^m\norm{n_i'(t)}$. Next, we relate $\norm{n_i'(t)}$ to the second fundamental form. For each $n_i(t)$, the shape (Weingarten) operator $S_{n_i(t)}:\tansp{\gamma(t)}\to \tansp{\gamma(t)}$ satisfies $\innerprod{S_{n_i(t)}(u)}{w}=\innerprod{\sff(u,w)}{n_i}$ and $\norm{S_{n_i(t)}}_{op}\le C$. We also have $$\norm{n_i'(t)}=\norm{S_{n_i(t)}(\gamma'(t))}\le \norm{S_{n_i(t)}}_{op} \norm{\gamma'(t)}\le C.$$ 
Thus, $\sum_{i=1}^m \norm{n_i'(t)}\le mC$, which results in
\begin{equation*}
    \norm{\projtan{x}-\projtan{y}}_{op}\le \int_0^{\dist(x,y)} \norm{\phi'(t)}_{op}dt \le 2mC\dist(x,y).
\end{equation*}


\end{proof}

\subsection{Proof of Theorem \ref{thm:zeroorderpt}}
Before the proof, we propose some lemmas to streamline the analysis. First, we define the function $h_{\delta}$ such that $\rgrad{h_{\delta}}(x)=\mathbb{E}[g_{\delta}(x)]$, and we replace our goal of finding ($\delta,\epsilon$)-stationary point of $f$ with ($\frac{\delta'}{2},\epsilon$)-stationary point of $h_{\frac{\delta'}{2}}$, where $\delta'$ can be chosen to compensate approximation errors. We compute an upper bound on the Lipschitz constant of $h_{\delta}$ using Lemmas \ref{lem:Lips_sect} and \ref{lem:riemLips}. Next, we compute an upper bound on the distance between $\rgrad{h_{\delta}(x)}$ and $\partial_{\delta}f(x)$ with Lemmas \ref{lem:pt_tr_bound}, \ref{lem:smoothing_bound} and \ref{lem:RiemGeomLem}. Combining these findings, we complete the proof of Theorem \ref{thm:zeroorderpt}.

Recall the following gradient estimator in the zeroth order setting 
\begin{equation*}
        g_{\delta}(x) = \frac{d}{2\delta} (F( \expm{x}(\delta u),\nu) - F(\expm{x}(-\delta u),\nu) ) u,
\end{equation*}
where $u$ is sampled uniformly from a sphere in $\tansp{x}$. Let us define $h_{\delta}:\man \to \mathbb{R}$ such that $h_{\delta}(x) := \int f \circ \expm{x}(u) d p_x(u)$, where $p_x$ is a uniform measure over $\balltansp{x}{\delta} \subset \tansp{x}$. Considering the function $f \circ \expm{x}:\tansp{x} \to \mathbb{R}$ which is defined over a Euclidean space we have $\rgrad{h_{\delta}(x)}=\mathbb{E}_{u}[g_{\delta}(x)]$ \cite[Lemma 1]{flaxman2004online}. We start with the derivation of analytical properties of $h_{\delta}(x)$.



\begin{lemma}[Lemma 5 in \cite{sun2019escaping}, Lemma 1 in \cite{mangoubi2018rapid}]
\label{lem:Lips_sect}
Let the sectional curvature be bounded by $K_s$. For $x,y \in \man$ and $w\in \tansp{x}$ we have 
\begin{equation}
    \dist(\expm{x}(w),\expm{y}(\ptg{x}{y}(w)))\le C_7 \dist(x,y),
\end{equation}
where $C_7$ depends on $K_s$.    
\end{lemma}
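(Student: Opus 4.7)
The plan is to bound the distance by constructing an explicit curve in $\man$ connecting the two endpoints $\expm{x}(w)$ and $\expm{y}(\ptg{x}{y}(w))$, and then to control its length via a Jacobi field estimate coming from the bounded sectional curvature assumption.

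First I would parametrize the minimizing geodesic from $x$ to $y$: let $\gamma:[0,L]\to\man$ be unit-speed with $\gamma(0)=x$, $\gamma(L)=y$, and $L=\dist(x,y)$. Let $W(s):=\ptg{x}{\gamma(s)}(w)\in T_{\gamma(s)}\man$ be the parallel transport of $w$ along $\gamma$, so $W(0)=w$ and $W(L)=\ptg{x}{y}(w)$. I would then define the two-parameter variation
\begin{equation*}
\alpha:[0,L]\times[0,1]\to\man,\qquad \alpha(s,t):=\expm{\gamma(s)}(t\,W(s)),
\end{equation*}
so that for each fixed $s$ the curve $t\mapsto\alpha(s,t)$ is a geodesic, and the endpoint curve $s\mapsto\alpha(s,1)$ joins $\expm{x}(w)$ to $\expm{y}(\ptg{x}{y}(w))$. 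Consequently the target distance is bounded by the length of this endpoint curve:
\begin{equation*}
\dist\!\left(\expm{x}(w),\,\expm{y}(\ptg{x}{y}(w))\right)\le\int_0^L\!\left\|\partial_s\alpha(s,1)\right\|\,ds.
\end{equation*}

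The main step would be to recognize that for each fixed $s$, the variation field $J_s(t):=\partial_s\alpha(s,t)$ is a Jacobi field along the geodesic $t\mapsto\alpha(s,t)$, with initial data $J_s(0)=\gamma'(s)$ (a unit vector, since $\gamma$ is unit-speed) and covariant derivative $J_s'(0)=\nabla_{\partial_s}W(s)=0$ (because $W$ is parallel along $\gamma$). The Rauch comparison theorem (equivalently, standard Jacobi field estimates on manifolds with sectional curvature bounded in absolute value by $K_s$) then gives a uniform bound $\|J_s(t)\|\le \varphi_{K_s}(t\|w\|)$ for $t\in[0,1]$, where $\varphi_{K_s}$ is a solution of the one-dimensional Jacobi equation with the comparison curvature; for $t\|w\|$ in a bounded range this yields a constant $C_7=C_7(K_s,\|w\|)$ such that $\|J_s(1)\|\le C_7$ uniformly in $s$. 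Plugging this back into the length bound gives the claim $\dist(\expm{x}(w),\expm{y}(\ptg{x}{y}(w)))\le C_7\,L=C_7\,\dist(x,y)$.

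The main obstacle is the Jacobi field estimate itself: one must handle both positive and negative sectional curvatures (the appropriate $\sin$ and $\sinh$ comparison functions), ensure that the geodesics $t\mapsto\alpha(s,t)$ do not leave the domain of the exponential map, and absorb the dependence on $\|w\|$ into the constant $C_7$. Once that analytic step is in hand, the rest is a clean variation-of-geodesics calculation, and indeed this is exactly the route taken in the cited references \cite{sun2019escaping,mangoubi2018rapid}, so I would simply invoke their Jacobi-field comparison bound rather than re-derive it from scratch.
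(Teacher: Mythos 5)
The paper does not prove this lemma itself; it cites it directly from \cite{sun2019escaping} and \cite{mangoubi2018rapid}, so there is no in-paper proof to compare against. Your variation-of-geodesics argument --- taking the geodesic variation $\alpha(s,t)=\expm{\gamma(s)}(tW(s))$, observing that $J_s(t)=\partial_s\alpha(s,t)$ is a Jacobi field with $J_s(0)=\gamma'(s)$ and $J_s'(0)=\nabla_{\partial_s}W(s)=0$, and bounding $\|J_s(1)\|$ by a Rauch-type comparison --- is precisely the standard route and matches what those references do, so the proposal is essentially correct and faithful to the intended argument.

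Two small points worth tightening. First, the Jacobi comparison should be applied after splitting $J_s(0)=\gamma'(s)$ into components tangential and normal to $W(s)$: with $J_s'(0)=0$, the tangential part stays constant along the geodesic while the normal part obeys the comparison estimate (e.g.\ $\|J_s^{\perp}(t)\|\le\cosh(\sqrt{K_s}\,t\|w\|)\|J_s^{\perp}(0)\|$ when $\sec\ge -K_s$); combining the two yields the uniform bound $\|J_s(1)\|\le\cosh(\sqrt{K_s}\|w\|)$. Second, the resulting constant $C_7$ depends not only on $K_s$ but also on $\|w\|$ (or an a~priori bound on it); in the paper's usage $\|w\|\le\delta\le 1$, so this dependence is harmlessly absorbed, but it should be stated to match the lemma as worded.
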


By using the above properties we can find the Lipschitz constant of $h_{\delta}$ in terms of the Lipschitz constant of $f$ and $C_7$, which depends on the curvature bound $K_s$.
\begin{lemma}
\label{lem:riemLips}
Let $f:\man \to R$ be an $L$-Lipschitz function and define $h_{\delta}(x) = \int f \circ \expm{x}(u) d p_x(u)$, where $p_x$ is a uniform measure over $\balltansp{x}{\delta} \subset \tansp{x}$.  Then, $h_{\delta}$ is Lipschitz continuous on $\man$ with a Lipschitz constant bounded by $L C_7$.
\end{lemma}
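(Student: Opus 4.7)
\textbf{Proof plan for Lemma \ref{lem:riemLips}.}
The plan is to bound $|h_\delta(x) - h_\delta(y)|$ by introducing a natural coupling between the uniform measures $p_x$ on $\balltansp{x}{\delta}$ and $p_y$ on $\balltansp{y}{\delta}$ via the parallel transport $\ptg{x}{y}$. Since parallel transport along the minimizing geodesic is a linear isometry, it maps $\balltansp{x}{\delta}$ bijectively onto $\balltansp{y}{\delta}$, and pushes the uniform measure on the former to the uniform measure on the latter. This lets me express both integrals defining $h_\delta(x)$ and $h_\delta(y)$ as integrals over the same domain $\balltansp{x}{\delta}$ with respect to the same measure $p_x$, differing only in the integrand.

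Concretely, I would write
\begin{equation*}
h_\delta(y) = \int f\big(\expm{y}(\ptg{x}{y}(u))\big)\, dp_x(u),
\end{equation*}
so that
\begin{equation*}
|h_\delta(x) - h_\delta(y)| \le \int \big| f(\expm{x}(u)) - f(\expm{y}(\ptg{x}{y}(u))) \big|\, dp_x(u).
\end{equation*}
Applying the Lipschitz continuity of $f$ from Assumption \ref{assum:func_lips} bounds the integrand by $L \cdot \dist(\expm{x}(u), \expm{y}(\ptg{x}{y}(u)))$, and then Lemma \ref{lem:Lips_sect} bounds this distance by $C_7 \dist(x,y)$ uniformly in $u$. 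Integrating out, the probability measure $p_x$ contributes a factor of $1$, yielding $|h_\delta(x) - h_\delta(y)| \le LC_7 \dist(x,y)$.

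The only subtlety is justifying that $\ptg{x}{y}$ is well-defined (and an isometry) between the relevant tangent balls, which requires $x$ and $y$ to be joined by a minimizing geodesic; for nearby points this is automatic, and the Lipschitz estimate extends globally by a standard chaining argument along piecewise-minimizing paths if needed. No difficult curvature estimates appear at this step beyond what is already encapsulated in Lemma \ref{lem:Lips_sect}, so the main obstacle is essentially bookkeeping the measure-theoretic coupling correctly; the curvature dependence is entirely absorbed into the constant $C_7$ inherited from the earlier lemma.
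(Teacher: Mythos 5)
Your proof is correct and follows essentially the same route as the paper: the paper also rewrites $h_\delta(y)$ as $\int f\circ\expm{y}(\ptg{x}{y}(u_x))\,dp_x(u_x)$ (implicitly using exactly the isometric pushforward of the uniform measure that you spell out), then applies the Lipschitz bound on $f$ followed by Lemma \ref{lem:Lips_sect}. The only difference is that you make the measure coupling and the well-definedness of $\ptg{x}{y}$ explicit, which the paper leaves tacit.
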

\begin{proof}
\begin{align*}
    |h_{\delta}(x) - h_{\delta} (y)| &= \left|\int f \circ \expm{x}(u_x) d p_x(u_x) - \int f \circ \expm{y}(\ptg{x}{y}(u_x)) d p_x(u_x)\right| \\
    \vphantom{\int f \circ \expm{x}(u_x)}&\le \int L \dist\big(\expm{x}(u_x),\expm{y}(\ptg{x}{y}(u_x))\big) d p_x(u_x) \\
    \vphantom{\int f \circ \expm{x}(u_x)}&\le L C_7 \dist(x,y),
\end{align*}
where in the last inequality we used Lemma \ref{lem:Lips_sect}.
\end{proof}
We now state the relation between Riemannian gradient of $h_{\delta}$ and $\partial_{\delta}f$ in terms of Lipschitz constant $L$, sectional curvature bound $K_s$ and $\delta$.

\begin{lemma}[Proposition A.3 in \cite{criscitiello2023accelerated}]
\label{lem:pt_tr_bound}
Let $\man$ be a Riemannian manifold whose sectional curvatures are in the interval $[K_{\min},K_{\max}]$, and let $K_s = \max (|K_{\min}|,|K_{\max} |)$. Consider $(x,s) \in T\man$ and the geodesic $\gamma(t) = \expm{x}(ts)$. If $\gamma$ is defined and has no interior conjugate point on the interval $[0,1]$, then
\begin{equation}
    \forall v \in \tansp{x} \quad \norm{T_s(v) - \ptg{x}{\gamma(t)}(v)} \le K_s f_{K_{\min}}(\norm{s}) \norm{v_{\perp}},
\end{equation}
where $v_{\perp} = v -\frac{\innerprod{s}{v}}{\innerprod{s}{s}}s $ is the component of $v$ orthogonal to $s$, and $T_s = d\expm{x}(s)$ is the differential of the exponential mapping. If it also holds that $\norm{s}\le \frac{\pi}{\sqrt{|K_{\min}|}}$, then 
\begin{equation}
    \forall v \in \tansp{x} \quad \norm{T_s(v) - \ptg{x}{\gamma(t)}(v)} \le \frac{1}{3} K_s \norm{s}^2\norm{v_{\perp}}.
\end{equation}
\end{lemma}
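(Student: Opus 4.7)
The plan is to interpret $T_s(v)=d\expm{x}|_s(v)$ as the terminal value of a Jacobi field and then compare it to a trivial (curvature-free) Jacobi field realized by parallel transport. Concretely, I would consider the smooth variation $\alpha(t,u)=\expm{x}(t(s+uv))$. Each curve $t\mapsto \alpha(t,u)$ is a geodesic, so the variation field $J(t)=\partial_u\alpha(t,0)$ along $\gamma(t)=\expm{x}(ts)$ is a Jacobi field with $J(0)=0$, $\frac{D}{dt}J(0)=v$, and $J(1)=T_s(v)$. On the other hand, letting $P(t)=\ptg{x}{\gamma(t)}(v)$ be the parallel translate of $v$ along $\gamma$, the vector field $Y(t)=tP(t)$ satisfies $Y''\equiv 0$; i.e., it is the Jacobi field one would obtain in flat space with the same initial data, and $Y(1)=\ptg{x}{\gamma(1)}(v)$. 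So the claim reduces to controlling $\|J(1)-Y(1)\|$.

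Next I would decompose $v=v^{\parallel}+v^{\perp}$ with $v^{\parallel}=\frac{\langle s,v\rangle}{\langle s,s\rangle}s$. Since $R(\cdot,\gamma')\gamma'$ annihilates vectors parallel to $\gamma'$, the Jacobi equation decouples along $\gamma'$ and orthogonal to $\gamma'$. The parallel component of $J$ is exactly $tP^{\parallel}(t)$, so the parallel part of $J(1)-Y(1)$ vanishes and only $v^{\perp}$ contributes, explaining why only $\|v^{\perp}\|$ appears in the bound. Restricting attention to the normal component, set $E(t)=J^{\perp}(t)-tP^{\perp}(t)$; then $E(0)=0$, $E'(0)=0$, and $E$ satisfies the inhomogeneous Jacobi equation
\begin{equation*}
    \frac{D^2}{dt^2}E(t) = -R(J^{\perp}(t),\gamma'(t))\gamma'(t),
\end{equation*}
with $\|R(J^{\perp},\gamma')\gamma'\|\le K_s\|s\|^2\|J^{\perp}\|$ by the sectional curvature bound.

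For the first (general) inequality, I would invoke Rauch's comparison theorem against the model space of constant curvature $K_{\min}$, which under the no-interior-conjugate-point hypothesis yields an a priori bound $\|J^{\perp}(t)\|\le \phi_{K_{\min}}(t\|s\|)\|v^{\perp}\|/\|s\|$ in terms of the standard Jacobi-field solution $\phi_{K_{\min}}$ (e.g.\ $\sin/\sinh$). Integrating the Jacobi equation for $E$ twice from $0$ using the Green-function representation
\begin{equation*}
    E(1) = -\int_0^1 (1-\tau)\, R(J^{\perp}(\tau),\gamma'(\tau))\gamma'(\tau)\,d\tau
\end{equation*}
and substituting the comparison bound yields $\|E(1)\|\le K_s f_{K_{\min}}(\|s\|)\|v^{\perp}\|$, where $f_{K_{\min}}$ packages the resulting integral. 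For the sharper second inequality under $\|s\|\le \pi/\sqrt{|K_{\min}|}$, the comparison function $\phi_{K_{\min}}$ is well-controlled and one can replace it with the elementary estimate $\|J^{\perp}(t)\|\le Ct\|v^{\perp}\|$ for a moderate constant $C$, then compute the iterated integral explicitly to extract the constant $\tfrac{1}{3}$.

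The main obstacle will be the quantitative comparison step: converting the qualitative Rauch bound into the explicit form $K_s f_{K_{\min}}(\|s\|)\|v^{\perp}\|$, and then sharpening the integral constant to $\tfrac{1}{3}$ under the refined length hypothesis. Both rest on the exact form of the Jacobi comparison function on the constant-curvature model space and on the absence of conjugate points along $\gamma|_{[0,1]}$, which ensures the comparison solutions stay non-degenerate so the Green-function integration is valid.
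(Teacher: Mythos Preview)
The paper does not prove this lemma; it is quoted verbatim as Proposition~A.3 from \cite{criscitiello2023accelerated} and used as a black box in the proof of Lemma~\ref{lem:smoothing_bound}. So there is no ``paper's own proof'' to compare against.

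That said, your plan is exactly the standard route to such estimates and is almost certainly what the cited reference does: identify $T_s(v)$ with the terminal value of the Jacobi field $J$ with $J(0)=0$, $J'(0)=v$; subtract the flat-space solution $tP(t)$; observe the tangential component cancels; and control the normal remainder $E(t)=J^{\perp}(t)-tP^{\perp}(t)$ via the inhomogeneous Jacobi equation plus a Rauch-type bound on $\|J^{\perp}\|$. Two small points worth tightening. First, your Green-function identity for $E(1)$ is only literally true after passing to a parallel orthonormal frame along $\gamma$, since $\frac{D^2}{dt^2}$ is the covariant second derivative; you should say this explicitly. Second, obtaining the sharp constant $\tfrac{1}{3}$ is a bit more delicate than ``replace $\|J^\perp(t)\|$ by $Ct\|v^\perp\|$ for a moderate $C$'': plugging a generic linear bound into $\int_0^1(1-\tau)\tau\,d\tau=\tfrac{1}{6}$ gives $\tfrac{C}{6}K_s\|s\|^2\|v^\perp\|$, so you need $C\le 2$, which in turn requires using the specific Rauch comparison function (e.g.\ $\sinh$/$\sin$) on the constant-curvature model and the length restriction $\|s\|\le \pi/\sqrt{|K_{\min}|}$ to control it uniformly on $[0,1]$. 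None of this is a gap in the strategy, just in the level of detail.
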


\begin{lemma}
\label{lem:smoothing_bound}
The distance between the Riemannian gradient of $h_{\delta}$ and the Riemannian $\delta$-subdifferential of $f$ can be bounded as 
$$\dist(\rgrad{h_{\delta}}(x),\partial_{\delta}f(x)):=\min_{z\in \partial_{\delta}f(x)}\Big\{\norm{\rgrad{h_{\delta}}(x)-z} \Big\}\le \frac{1}{3} K_sL\delta^2.$$

\end{lemma}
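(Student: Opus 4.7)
The plan is to exhibit an explicit point $z \in \partial_{\delta} f(x)$ satisfying $\|\rgrad h_\delta(x) - z\| \le \tfrac{1}{3} K_s L \delta^2$, which immediately proves the claim. My candidate is the Bochner average of parallel-transported Clarke subgradients,
\begin{equation*}
z := \frac{1}{\mathrm{vol}(\balltansp{x}{\delta})} \int_{\balltansp{x}{\delta}} \ptg{\expm{x}(v)}{x}\big(\rgrad f(\expm{x}(v))\big)\, dv.
\end{equation*}
Each integrand lies in $\ptg{y}{x}(\partial f(y))$ for $y = \expm{x}(v)$ with $\dist(x,y) = \|v\| \le \delta$, so by Definition \ref{def:Riemsubdiff} the integral belongs to $\partial_{\delta} f(x)$. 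The integrand is well defined for a.e.\ $v$ by Rademacher's theorem applied to the Lipschitz pullback $f \circ \expm{x}$ on $\tansp{x}$, and is uniformly bounded by $L$ thanks to Assumption \ref{assum:func_lips}.

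The next step would be to rewrite $\rgrad h_\delta(x) = \mathbb{E}[g_\delta(x)]$ (which the excerpt establishes via Flaxman's lemma applied to $\phi := f \circ \expm{x}$ on the Euclidean tangent space) in a form directly comparable to $z$. Since $\phi$ is Lipschitz on $\tansp{x}$, exchanging Euclidean gradient and expectation and applying the chain rule $\nabla \phi(v) = (d\expm{x}(v))^{*}\,\rgrad f(\expm{x}(v))$ gives
\begin{equation*}
\rgrad h_\delta(x) = \frac{1}{\mathrm{vol}(\balltansp{x}{\delta})} \int_{\balltansp{x}{\delta}} (d\expm{x}(v))^{*}\big(\rgrad f(\expm{x}(v))\big)\, dv,
\end{equation*}
where $(d\expm{x}(v))^{*} : T_{\expm{x}(v)}\man \to \tansp{x}$ denotes the adjoint of the differential $T_v := d\expm{x}(v)$.

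The core comparison is between $T_v^{*}$ and the inverse parallel transport $\ptg{\expm{x}(v)}{x}$. Because $\ptg{x}{\expm{x}(v)}$ is an isometry, $\ptg{\expm{x}(v)}{x}$ coincides with its adjoint, so $\|T_v^{*} - \ptg{\expm{x}(v)}{x}\|_{\mathrm{op}} = \|T_v - \ptg{x}{\expm{x}(v)}\|_{\mathrm{op}}$. Applying Lemma \ref{lem:pt_tr_bound} with $s = v$ gives $\|T_v(w) - \ptg{x}{\expm{x}(v)}(w)\| \le \tfrac{1}{3} K_s \|v\|^2 \|w_{\perp}\|$ for every $w \in \tansp{x}$, hence the operator-norm bound $\tfrac{1}{3} K_s \|v\|^2$. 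Subtracting the two integrand expressions, using Cauchy--Schwarz with $\|\rgrad f\| \le L$ and $\|v\| \le \delta$ throughout the ball yields
\begin{equation*}
\|\rgrad h_\delta(x) - z\| \le \frac{1}{\mathrm{vol}(\balltansp{x}{\delta})} \int_{\balltansp{x}{\delta}} \tfrac{1}{3} K_s \|v\|^2 L\, dv \le \tfrac{1}{3} K_s L \delta^2,
\end{equation*}
which completes the argument.

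The hard part will be rigorously justifying the exchange of differentiation and expectation for the merely Lipschitz pullback $\phi = f \circ \expm{x}$, which underpins both the Flaxman identity and the adjoint-of-differential formula for $\rgrad h_\delta(x)$. This hinges on dominated convergence together with Rademacher's theorem in $\tansp{x}$, plus the mild requirement that $\delta$ is within the injectivity radius so that Lemma \ref{lem:pt_tr_bound} applies uniformly on $\balltansp{x}{\delta}$. Once these technicalities are handled, the remainder of the argument is a clean operator-norm calculation that extracts the sharp $\tfrac{1}{3}$ constant directly from Lemma \ref{lem:pt_tr_bound}, avoiding any explicit Jacobi-field computation.
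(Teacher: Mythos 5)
Your proposal is correct and takes essentially the same route as the paper: both exhibit the Bochner average $z=\int_{\balltansp{x}{\delta}} \ptg{y(s)}{x}(\rgrad f(y(s)))\,dp_x(s)\in\partial_\delta f(x)$ as the comparison point, use the pullback/chain-rule expression for $\rgrad h_\delta(x)$, and invoke Lemma~\ref{lem:pt_tr_bound} to bound the discrepancy between $d\expm{x}(s)$ and parallel transport. The only cosmetic difference is that you phrase the final estimate as an operator-norm bound on $T_v^{*}-\ptg{y(s)}{x}$ (via the adjoint identity $(\ptg{x}{y})^{*}=\ptg{y}{x}$), whereas the paper reaches the same bound by testing the difference against an optimally chosen unit vector $v$ and applying Cauchy--Schwarz.
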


\begin{proof}

Let $s,v\in \tansp{x}$, $y(s)=\expm{x}(s)$ and $p_x$ be a uniform measure on $\balltansp{x}{\delta}$. Then, we can write
\begin{align*}
df (\expm{x}(s)) [d \expm{x}(s) [v] ] &= \langle \rgrad{f(y(s))} ,  d \expm{x}(s) [v]  \rangle\\  
&= \langle \ptg{y(s)}{x} (\rgrad{f(y(s))}) , \ptg{y(s)}{x}  (d \expm{x} (s) [v]) \rangle.
\end{align*}
Therefore,
\begin{align*}
\resizebox{\textwidth}{!}{$
\begin{aligned}
\innerprod{v}{\rgrad{h_{\delta}(x)} } = d h_{\delta} (x) [v]  &= \int_{\balltansp{x}{\delta}} df(\expm{x}(s)) [d \expm{x}(s) [v]] dp_x(s)\\
&= \int_{\balltansp{x}{\delta}} \innerprod{ \ptg{y(s)}{x} (\rgrad{f(y(s))})} {\ptg{y(s)}{x}  (d \expm{x}(s) [v]) }  dp_x(s) \\
&= \int_{\balltansp{x}{\delta}} \innerprod{\ptg{y(s)}{x} (\rgrad{f(y(s))})}{v}dp_x(s) \\
&+  \int_{\balltansp{x}{\delta}}\innerprod{ \ptg{y(s)}{x} (\rgrad{f(y(s))}) } {\ptg{y(s)}{x}  (d \expm{x}(s) [v]) - v } dp_x(s).
\end{aligned}
$}
\end{align*}
This equality holds for any $v \in \tansp{x}$. So we can write 
\begin{align*}
    \innerprod{\rgrad{h_\delta (x)} - \int_{\balltansp{x}{\delta}} \ptg{y(s)}{x} &(\rgrad{f(y(s))}) dp_x(s)}{v} 
    \\
    &= \int_{\balltansp{x}{\delta}} \innerprod{\rgrad{f(y(s))}} {d \expm{x}(s) [v] - \ptg{x}{y(s)}(v)}  dp_x(s)\\
    &= \int_{\balltansp{x}{\delta}}\innerprod{\rgrad{f(y(s))}} { (T_s-\ptg{x}{y(s)} ) [v]} dp_x(s).
\end{align*}
We can choose $v$ as a unit vector in the direction of $\rgrad h_\delta (x) - \int_{\balltansp{x}{\delta}} \ptg{y(s)}{x} (\rgrad f(y(s)))dp_x(s) $. Then, 
\begin{align*}
    \norm{\rgrad h_{\delta} (x) &- \int_{\balltansp{x}{\delta}} \ptg{y(s)}{x} (\rgrad{f(y(s))}) dp_x(s) } \\
    &= \int_{\balltansp{x}{\delta}} \innerprod{\rgrad{f(y(s))} } {(T_s-\ptg{x}{y(s)} ) [v]} dp_x(s)\\ 
    &\le \int_{\balltansp{x}{\delta}} \norm{\rgrad{f(y(s))}} \norm{(T_s-\ptg{x}{y(s)}) [v]  } dp_x(s)\\
    &\le \frac{1}{3} L K_s \delta^2.
\end{align*}
In the last inequality, we used the bound in Lemma \ref{lem:pt_tr_bound} and the facts that $\norm{v}=1$ and $\norm{s}\leq \delta$ when $dp_x(s)>0$. Since $\int_{\balltansp{x}{\delta}} \ptg{y(s)}{x} (\rgrad{f(y(s))}) dp_x(s) $ is an element of $\partial_{\delta}f(x)$ the above implies that $$\dist(\rgrad h_{\delta} (x),\partial_{\delta}f(x))\le \dist\Big(\rgrad h_{\delta} (x),\int_{\balltansp{x}{\delta}} \ptg{y(s)}{x} (\rgrad{f(y(s))}) dp_x(s)\Big)\le \frac{1}{3} L K_s \delta^2.$$
\end{proof}
In the remaining part of the proof of Theorem \ref{thm:zeroorderpt}, we establish the connection between $\partial_{\frac{\delta}{2}}h_{\frac{\delta}{2}}(x)$ and $\partial_{\delta} f(x)$. By Definition \ref{def:Riemsubdiff} we have  
\begin{align*}
\partial_{\frac{\delta}{2}} h_{\frac{\delta}{2}}(x) &:= \mathrm{cl~conv} \{ \ptg{y}{x} (\partial h_{\frac{\delta}{2}}(y)) : y \in \mathrm{cl}~B(x,\frac{\delta}{2})\} \\
&\subseteq \mathrm{cl~conv} \{ \ptg{y}{x} (\ptg{z}{y} (\partial f(z))) : y \in \mathrm{cl}~B(x,\frac{\delta}{2}) \text{ and } z \in \mathrm{cl}~B(y,\frac{\delta}{2}) \} + \balltansp{x}{\frac{1}{12}K_s L\delta^2} \\
&\subseteq \mathrm{cl~conv} \{  \ptg{y}{x} (\partial f(y)) : y \in \mathrm{cl}~B(x,\delta) \} + \balltansp{x}{\frac{1}{12}K_s L\delta^2 +2CL\delta}.
\end{align*}
In the first inclusion we used Lemma \ref{lem:smoothing_bound} and in the second one we used Lemma \ref{lem:RiemGeomLem}. This result indicates that 
\begin{equation}
    \norm{\rgrad f(w_{out})}_{\delta} \le \norm{\rgrad h_{\frac{\delta}{2}}(w_{out})}_{\frac{\delta}{2}} + 2CL\delta + \frac{1}{12}LK_s\delta^2.
\end{equation}
Hence, we can work on $h_{\frac{\delta}{2}}$, which is $LC_7$-Lipschitz.

A crucial point in the proof of zeroth order estimator is the bound on the second moment $\E[\norm{g_{\delta}(x)}^2]$. This is established by considering the function $F(\expm{x}(u),\nu)$, which is $L(\nu)$-Lipschitz in its first argument, and $u$ belongs to a Euclidean space. By applying \citep[Lemma E.1]{lin2022gradient} we have the bound $\E[\norm{g_{\delta}(x)}^2] \leq 16 \sqrt{2\pi}d (L\frac{\sinh(\sqrt{K_s}\delta)}{\sqrt{K_s}\delta})^2$. 


 To prove this statement, we will follow the same approach in the proof of \citep[Lemma E.1]{lin2022gradient}. We consider the function $h(u)=F(\expm{x}(\delta u),\nu)$ with $\norm{u} \le 1$. We have $$|h(u)-h(v)| \le L(\nu)\dist(\expm{x}(\delta u), \expm{x}(\delta v)) \le \frac{\sinh(\sqrt{K_s}\delta)}{\sqrt{K_s}\delta} L(\nu)\norm{\delta(u-v)},$$ where $K_s$ is the bound on the sectional curvature, and the curvature related term arises due to Jacobi field comparison analysis \cite[Theorem 11.9]{lee2018introduction}. 
As we work on $h_{\frac{\delta}{2}}$,  the term $\frac{\sinh(\sqrt{K_s}\delta)}{\sqrt{K_s}\delta}$ can be bounded by $\frac{\sinh(\sqrt{K_s}/2)}{\sqrt{K_s}/2}$ since $\delta \le 1$, and $\frac{\sinh(x)}{x}$ is monotonically increasing for $x>0$. 

Updating Theorem \ref{thm:O2NCptg} with the parameters of $h_{\frac{\delta}{2}}$, i.e., $G \leftarrow \frac{\sinh(\sqrt{K_s}/2)}{\sqrt{K_s}/2}L\sqrt{16 \sqrt{2\pi}d} $ and $L \leftarrow LC_7$, we have  
\begin{equation}
\mathbb{E}[\norm{\rgrad h_{\frac{\delta}{2}}{(w_{out})}}_{\frac{\delta}{2}} ] \le C_5(\delta N)^{-\frac{1}{3}} + \frac{3}{2} \delta L C_7 C.   
\end{equation}
where $C_5 = 2^{1/3}(\theta  +L\frac{\sinh(\sqrt{K_s}/2)}{\sqrt{K_s}/2}\sqrt{16 \sqrt{2\pi}d} (2+C'+4C))$. Therefore, we get
\begin{align*}
\mathbb{E}[\norm{\rgrad f(w_{out})}_{\delta}] &\le \mathbb{E}[\norm{\rgrad h_{\frac{\delta}{2}}{(w_{out})}}_{\frac{\delta}{2}} ] + 2CL\delta + \frac{1}{12}LK_s\delta^2 \\ &\le C_5(\delta N)^{-\frac{1}{3}} + \frac{3}{2}\delta L C_7 C + 2CL\delta + \frac{1}{12}LK_s\delta^2 \\
&\le C_5(\delta N)^{-\frac{1}{3}} + \delta L C_6, 
\end{align*}
where $C_6 := C ( 2+\frac{3}{2}C_7 ) + \frac{1}{12}K_s $ by using $\delta < 1$. 

For a given $\delta$, we can choose
$$\delta' = \min \left\{\delta,\frac{\epsilon}{2LC_6}\right\},$$
and we can find a ($\delta',\epsilon$)-stationary point in $N=O(\delta'^{-1}\epsilon^{-3})$ iterations.

\subsection{Proof of Lemma \ref{lem:RiemGeomLem}}
\begin{proof}
\label{proof:RiemGeomLem}
We know that $\mathbb{E}_u[g_{\delta}(x)] = \rgrad{h_{\delta}(x)}$, and as we proved in Lemma \ref{lem:smoothing_bound}, we have $\dist(\rgrad{h_{\delta}}(x),\partial_{\delta}f(x)) \le \frac{1}{3} K_sL\delta^2$. Hence, $\E_u[g_{\delta}(x)] \in \partial_{\delta}f(x) + \balltansp{x}{ \frac{1}{3} K_s L \delta^2} $.

In the second part of the proof we want to show that $ \ptg{x_t}{y}(\partial_{\frac{\delta}{2}}f(x_t)) \subset \partial_{\delta} f(y) + \balltansp{y}{2CL\delta} $. Consider an element $v_{x_t} \in \partial_{\frac{\delta}{2}}f(x_t)$ such that it can be written as $v_{x_t} = \sum_{i=1}^k \lambda_i \ptg{z_i}{x_t} (v_{z_i}) $ where $ 0 \le \lambda_i \le 1 $, $\sum_{i=1}^{k} \lambda_i=1$ , $\dist(x_t,z_i) \le \frac{\delta}{2}$ and $v_{z_i} \in \partial f(z_i)$. Now, consider $v_y = \sum_{i=1}^{k} \lambda_i \ptg{z_i}{y}(v_{z_i}) \in \partial_{\delta} f(y)$. We can show that 
\begin{align*}
\norm{\ptg{x_t}{y}(v_{x_t}) - v_y} &= \norm{ \sum_{i=1}^{k} \lambda_i(\ptg{x_t}{y}\circ\ptg{z_i}{x_t} (v_{z_i}) - \ptg{z_i}{y} (v_{z_i}) ) } \\
&= \norm{ \sum_{i=1}^{k} \lambda_i( \ptg{y}{z_i}\circ\ptg{x_t}{y}\circ\ptg{z_i}{x_t} (v_{z_i}) -  v_{z_i} ) } \\
&\le \sum_{i=1}^{k} \lambda_i \norm{\ptg{y}{z_i}\circ\ptg{x_t}{y}\circ\ptg{z_i}{x_t} (v_{z_i}) -  v_{z_i}} \\
&\le \sum_{i=1}^{k} \lambda_i 2LC\delta=2LC\delta,
\end{align*}
where we used Lemma \ref{lem:secfund} with $\len (z_i,x_t,y,z_i)\le 2\delta$. 

As we show that for any $v_{x_t} \in \partial_{\frac{\delta}{2}}f(x_t)$ there exists $v_y \in \partial_{\delta}f(y)$ such that $\norm{\ptg{x_t}{y}(v_{x_t}) - v_y}\le 2L\delta C$,  we conclude that $ \ptg{x_t}{y}(\partial_{\frac{\delta}{2}}f(x_t)) \subseteq \partial_{\delta} f(y) + \balltansp{y}{2CL\delta} $.
\end{proof}

\section{Discussion on Assumption \ref{assum:retr_curves}}
\label{app:manret}
\subsection{Generality of the Assumption \ref{assum:retr_curves}}

In this section, we show that an extension of Assumption \ref{assum:retr_curves} holds for both smooth first-order and projection-based retraction curves.
For a smooth first-order retraction curve $\gamma(t)=\retr{x}{t\xi}$, the first and second derivatives are given by $\gamma'(t)=d\retr{x}{t\xi}[\xi]$ and $\gamma''(t)=d^2\retr{x}{t\xi}[\xi,\xi]$, respectively. Hence, the bounds on $\norm{\gamma'(t)}$ and $\norm{\gamma''(t)}$ depend on the operator norm of $d\retr{x}{t\xi}$ and $d^2\retr{x}{t\xi}$. Let $B_1$ and $B_2$ denote the uniform bounds such that $\norm{d\retr{x}{\eta}}_{op}\le B_1$ and $\norm{d^2\retr{x}{\eta}}_{op}\le B_2$ for all $\eta$ in a ball with a finite radius in the tangent space $\tansp{x}$. Then, we have
$$\norm{\frac{d}{dt} \retr{x}{t\xi}} \le B_1\norm{\xi},~~~~\text{and}~~~~\norm{\frac{d^2}{dt^2} \retr{x}{t\xi}} \le B_2 \norm{\xi}^2.$$ In the implementation, it suffices to rescale the gradient clipping parameter $D$ by a factor of $\frac{1}{B_1}$ to maintain the same convergence rate. 

A similar argument applies to projection-based retraction curves, e.g., $\gamma(t)=\projman(x+t\xi)$, where the constants $B_1$ and $B_2$
are determined by the operator norms of $d\projman(x+t\xi)[\xi]$ and $d^2\projman(x+t\xi)[\xi,\xi]$.

\subsection{Example on Manifold-Retraction Pairs}

In this part, we prove that for polar decomposition retractions $\retr{X}{t\xi} = (X+t\xi)(I_p+t^2\xi^{\top}\xi)^{-\frac{1}{2}}$ on the Stiefel manifold $St(p,n) = \{ X \in \R^{n\times p} : X^{\top} X = I_p \}$, the conditions of Assumption \ref{assum:retr_curves} hold with $C' =1$. 

For the notational convenience, we denote $A_t := I_p+t^2\xi^{\top}\xi$, $B_t := \frac{d}{dt} \retr{x}{t\xi}$ and $C_t := \frac{d^2}{dt^2} \retr{x}{t\xi} $. Also, $\norm{\cdot}_F$ denotes the Frobenius norm. Let us start with 
\begin{align*}
B_t = \frac{d}{dt} \retr{x}{t\xi} &= \xi (I_p+t^2\xi^{\top}\xi)^{-\frac{1}{2}} - t(X+t\xi)(I_p+t^2\xi^{\top}\xi)^{-\frac{3}{2}}(\xi^{\top}\xi)\\ 
\vphantom{\frac{d}{dt}}&= \xi (A_t)^{-\frac{1}{2}} - t (X+t\xi) A_t^{-\frac{3}{2}}(\xi^{\top}\xi)   \\
\vphantom{\frac{d}{dt}}&= \xi (A_t)^{-\frac{3}{2}} (A_t-t^2\xi^{\top}\xi) - tX (A_t)^{-\frac{3}{2}}(\xi^{\top}\xi) \\
\vphantom{\frac{d}{dt}}&=\xi(A_t)^{-\frac{3}{2}} - tX (A_t)^{-\frac{3}{2}}(\xi^{\top}\xi).
\end{align*}  
Since $\xi \in \tansp{X}$, we have $X^{\top}\xi + \xi^{\top}X=0$. Now, suppose that the eigenvalue decomposition of $\xi^{\top}\xi$ can be written as $\xi^{\top}\xi = U\Sigma U^{\top}$, where $\Sigma = \mathrm{diag}(\lambda_i)$, $\lambda_i \geq 0$. We  have that $A_t = U \mathrm{diag}(1+t^2 \lambda_i) U^{\top}$ and $\mathrm{Tr}(A_t^{-3}(\xi^{\top}\xi)) = \sum_{i=1}^p \frac{\lambda_i}{(1+t^2 \lambda_i)^{3}}$. Then, $\mathrm{Tr}(B_t^{\top}B_t)$ can be bounded as
\begin{align*}
\vphantom{\sum_{i=1}^p}\mathrm{Tr}(B_t^{\top}B_t) &= \mathrm{Tr}(A_t^{-3}(\xi^{\top}\xi)) + \mathrm{Tr}(t^2 A_t^{-3}(\xi^{\top}\xi)^2)  \\
\vphantom{\sum_{i=1}^p}&= \sum_{i=1}^p \frac{\lambda_i}{(1+t^2 \lambda_i)^{3}} + \frac{t^2\lambda_i^2}{(1+t^2 \lambda_i)^{3}}  = \sum_{i=1}^p \frac{\lambda_i}{(1+t^2 \lambda_i)^{2}} \\
\vphantom{\sum_{i=1}^p}&\le \sum_{i=1}^p \lambda_i = \mathrm{Tr}(\xi^{\top}\xi).
\end{align*}
So, we showed that the first condition holds, i.e.,   $\norm{\frac{d}{dt} \retr{x}{t\xi}}_F \le \norm{\xi}_F$.



Similarly, we can write $C_t$ as 
\begin{align*}
C_t = \frac{d^2}{dt^2} \retr{x}{t\xi} = -3 t\xi (A_t)^{-\frac{5}{2}}(\xi^{\top}\xi) - X (A_t)^{-\frac{3}{2}}(\xi^{\top}\xi) + 3t^2 X (A_t)^{-\frac{5}{2}}(\xi^{\top}\xi)^2. 
\end{align*}
Then, the upper bound on the trace of $C_t^{\top}C_t$ can be derived as follows
\begin{align*}
\mathrm{Tr}(C_t^{\top}C_t) &= \sum_{i=1}^p \frac{9t^2\lambda_i^3}{(1+t^2 \lambda_i)^{5}} + \frac{\lambda_i^2}{(1+t^2 \lambda_i)^{3}} + \frac{9t^4\lambda_i^4}{(1+t^2 \lambda_i)^{5}} -\frac{6t^2\lambda_i^3}{(1+t^2 \lambda_i)^{4}} \\
&= \sum_{i=1}^p \frac{\lambda_i^2 +5t^2 \lambda_i^3 + 4 t^4\lambda_i^4}{(1+t^2 \lambda_i)^{5}} = \sum_{i=1}^p \frac{\lambda_i^2 (1+4t^2\lambda_i^2) }{(1+t^2 \lambda_i)^{4}} \\
&\leq \sum_{i=1}^p \lambda_i^2  \leq (\sum_{i=1}^p \lambda_i)^2  =\mathrm{Tr}(\xi^{\top}\xi)^2.
\end{align*}

As a result we have $\norm{\frac{d^2}{dt^2} \retr{x}{t\xi}}_F = \norm{C_t}_F =  \sqrt{\mathrm{Tr}(C_t^\top C_t)} \le \mathrm{Tr}(\xi^{\top}\xi) = \norm{\xi}_F^2$. Hence, the second condition of Assumption \ref{assum:retr_curves} holds with $C' =1 $.

\section{Discussion on Intrinsic and Extrinsic Curvature Measures}
\label{App:curv}
In our work, we used second fundamental form to analyze distortions caused by parallel transport and projection-based operations. As an extrinsic measure of curvature, the second fundamental form is required to analyze projection-based algorithms. Since parallel transport is an intrinsic operation, an intrinsic measure of curvature can be used to analyze the corresponding distortions.

In the proof of Lemmas \ref{lem:Goldstein} and \ref{lem:RiemGeomLem}, the main argument is to bound $\norm{v - \ptseq{S}(v)}$ with the length of a loop $S$ and a bound on the second fundamental form. A similar result can be obtained under the weaker assumption of bounded sectional curvature; however, this approach requires estimating the area enclosed by the loop, which introduces additional geometric complexity.  

In the proof of optimization with a zeroth order gradient estimator, we used intrinsic curvature bounds $\norm{R(X,Y)Z}\le K_c$ and $|\sec(X,Y)|\le K_s$ for unit vectors $X,Y,Z \in \vecf $. To relate these to extrinsic geometry, let the second fundamental form satisfy $|\sff(X,Y)| \le C$. The covariant (Riemann) curvature tensor acts as $\Rm(X,Y,Z,W) = \innerprod{R(X, Y )Z} {W} $. By \cite[Theorem 8.5]{lee2018introduction} this yields $\Rm(W,X,Y,Z) \le 2C^2$, providing an upper bound on sectional curvature in terms of the second fundamental form. 



From the definition of sectional curvature we have 
$$\sec(u,v) = \frac{\Rm(u,v,v,u)}{\|u\|^2\|v\|^2 - \innerprod{u}{v}^2},$$
for linearly independent vectors $u$ and $v$. 

Let us assume $u$ and $v$ are unit vectors and write $v = u \sin(\theta) + w \cos(\theta)$, where $w$ is orthogonal to $u$. Then, we have 
\begin{align*} \sec(u,v) &= \frac{\Rm(u, u \sin(\theta) + w \cos(\theta) , u \sin(\theta) + w \cos(\theta) , u)}{\norm{u}^2\norm{v}^2 - \innerprod{u}{v}^2} \\
& = \frac{\Rm(u, w \cos(\theta) ,  w \cos(\theta) , u)}{\cos^2(\theta)} \\
\vphantom{\frac{1^2}{2^2}}&= \Rm(u,w,w,u) \le 2C^2. 
\end{align*}
In the above equation we use linearity of covariant curvature tensor in each entry and the symmetry property that implies $\Rm(X,X,Y,Z)=\Rm(X,Y,Z,Z)=0$.
We can also follow this alternative explanation. Since the sectional curvature depends only on the plane spanned by the vectors and is independent of the choice of basis, we can, without loss of generality, consider any orthonormal pair $u,v$ and obtain the same result. Then, $\sec(u,v) = \Rm(u,v,v,u)\le 2C^2 $. As a result, we can use $2C^2$ in place of $K_s$ as well as $K_c$.

\section{Comparison of Zeroth Order Riemannian Gradient Estimators}
\label{App:zero}
Two-point gradient estimators in the form of $g_{\delta}^E(x) = \frac{d}{2\delta} (F( x+\delta u,\nu) - F(x-\delta u,\nu) ) u$ are commonly used in nonsmooth nonconvex optimization, where $u$ is sampled on a unit sphere \cite{lin2022gradient,kornowski2024algorithm}. 

A natural extension of the gradient estimator $g_{\delta}^E(x)$ to the Riemannian setting is follows, 
\begin{equation}
    g_{\delta}^R(x) =\frac{S_{\delta}}{V_{\delta}}f(u)\frac{\expm{x}^{-1}(u)}{{\norm{\expm{x}^{-1}(u)}}},
\end{equation}
where $S_{\delta}$ denotes the surface area of a sphere centered at $x$ with radius $\delta$, and $V_{\delta}$ denotes the volume of a ball centered at $x$ with radius $\delta$ \cite{wang2023online}. The main advantage of this estimator is its unbiasedness \cite[Lemma 11]{wang2023online} when $u$ is uniformly sampled from the sphere centered at $x$ with radius $\delta$. However, the gradient estimator requires the computation of $S_{\delta}$ and $V_{\delta}$ for each point $x$, which makes this approach computationally demanding in practical applications. 

An alternative for the zeroth order Riemannian gradient estimator can be written based on sampling $u$ on the tangent space $\tansp{x}$ as follows
\begin{equation}
    g_{\delta}(x) = \frac{d}{2\delta} (F( \expm{x}(\delta u),\nu) - F(\expm{x}(-\delta u),\nu) ) u,
\end{equation}
which is used in our work. As this estimator does not involve computing a volume or a surface area, it is practically advantageous; however, establishing a clear connection with the Riemannian gradient remains challenging. In \cite{li2023stochastic} the connection between a gradient estimator involving $f(\retr{x}{u})$ and $\rgrad{ f(x)}$ is established, assuming smoothness of the objective and restricted Lipschitz-type gradient for the pullback function. However, for nonsmooth objectives, it is essential to define a smoothed objective $h_{\delta}(x)$ such that $h_{\delta}(x)$ is Lipschitz continuous, and the distance between $\rgrad{ h_{\delta}(x)}$ and $\partial_{\delta}f(x)$ can be controlled with $\delta$.

\section{Constant Terms}
\label{app:constant}
Here, we provide the constant terms used throughout the proofs.
\begin{align*}
    \vphantom{G\sqrt{16 \sqrt{2\pi}d}}C_1 &= \theta + \sigma +G + GC'+ 4GC \\
    \vphantom{G\sqrt{16 \sqrt{2\pi}d}}C_2 &= C \\
    \vphantom{G\sqrt{16 \sqrt{2\pi}d}}C_3 &= \theta + \sigma+G + 4mGC \\
    \vphantom{G\sqrt{16 \sqrt{2\pi}d}}C_4 &= 2mGC+GC' + 2m^2GC^2 \\
    \vphantom{G\sqrt{16 \sqrt{2\pi}d}}C_5 &=  2^{1/3}\left(\theta  +L\frac{\sinh(\sqrt{K_s}/2)}{\sqrt{K_s}/2}\sqrt{16 \sqrt{2\pi}d} \left(2+C'+4C\right) \right)\\
    \vphantom{G\sqrt{16 \sqrt{2\pi}d}}C_6 &= C ( 2+\frac{3}{2}C_7 ) + \frac{1}{12}K_s
\end{align*}
For $C_7$, find the definition in \cite[Lemma 1]{mangoubi2018rapid}.

\end{document}